\newtheorem{theorem}{Theorem}[section]
\newtheorem{corollary}{Corollary}
\newtheorem{lemma}[theorem]{Lemma}
\newtheorem{proposition}{Proposition}
\theoremstyle{definition}
\newtheorem{definition}[theorem]{Definition}
\newtheorem{remark}{Remark}
\newtheorem{assumption}[theorem]{Assumption}
\DeclareMathAlphabet{\mathpzc}{OT1}{pzc}{m}{it}
\newcommand{\cEl}{\scalebox{1.25}{$\mathpzc{E}$}}
\newcommand{\cBl}{\scalebox{1.25}{$\mathpzc{B}$}}
\newcommand{\cAl}{\scalebox{1.25}{$\mathpzc{A}$}}
\newcommand{\cCl}{\scalebox{1.25}{$\mathpzc{C}$}}
\newcommand{\R}{\mathbb{R}}
\DeclareMathOperator{\curl}{curl}
\DeclareMathOperator{\divg}{div}
\DeclareMathOperator{\im}{im}
\newcommand{\scpr}[2]{\left\langle #1,#2\right\rangle}
\title[Passivity and solution estimates for a coupled MQS model]
\author[Timo Reis and Tatjana Stykel]{}
\subjclass{Primary:
12H20, %Abstract differential equations
34A09, %Implicit ordinary differential equations, differential-algebraic equations
37L05; %General theory of infinite-dimensional dissipative dynamical systems, nonlinear semigroups, evolution equations
Secondary: 35B45,   %	A priori estimates in context of PDEs
78A30, %Electro- and magnetostatics
}
\keywords{magneto-quasistatic systems,
	abstract differential-algebraic equations,
	magnetic energy,
	passivity,
	port-Hamiltonian systems.}
\email{timo.reis@tu-ilmenau.de}
\email{stykel@math.uni-augsburg.de}
\begin{document}
\maketitle

% Enter the first author's name and address:
\centerline{\scshape Timo Reis$^*$}
\medskip
{\footnotesize
	% Enter the address of the first author
	\centerline{Institut für Mathematik} %	First line of the address of the first author}
	\centerline{Technische Universit\"at Ilmenau}
	\centerline{Weimarer Str. 32}
	\centerline{98693 Ilmenau, Germany}
} % Do not forget to end {\footnotesize with the sign }

\medskip

\centerline{\scshape Tatjana Stykel}
\medskip
{\footnotesize
	% Enter the address of the second and third authors
	\centerline{Institut f\"ur Mathematik \& Centre for Advanced Analytics and Predictive Sciences}
	\centerline{Universit\"at Augsburg}
	\centerline{Universit\"atsstr. 12a}
	\centerline{86159 Augsburg, Germany}
}

\bigskip
% The name of the associate editor will be entered by AIMS editorial staff.
% "Communicated by the associate editor name" is not needed for special issue.
\centerline{(Communicated by the associate editor name)}

%The abstract of your paper
\begin{abstract}
{In this paper,   we study a~quasilinear coupled magneto-quasistatic model from a~systems theoretic perspective.}
First, by taking the injected voltages as input and the associated currents as output, we prove that the magneto-quasistatic system is passive. Moreover, by defining suitable Dirac and resistive structures, we show that it admits a~representation as a~port-Hamiltonian system. Thereafter, we consider dependence on initial and input data. We show that the current and the magnetic vector potential can be estimated by means of the initial magnetic vector potential and the voltage. We also analyse the free dynamics of the system and study the asymptotic behavior of the solutions for $t\to\infty$.
\end{abstract}

\section{Introduction}\label{sec:intro}

A~subject of this paper is the investigation of systems theoretic properties of a~quasulinear coupled magneto-quasistatic (MQS) model
\begin{subequations}\label{eq:MQS}
	\begin{align}
	{\tfrac{\partial}{\partial t}}\left(\sigma\bm{A}\right) + \nabla \times \left(\nu(\cdot,\|\nabla \times \bm{A}\|_2)
	\nabla \times \bm{A}\right)  = & \; \chi\,\bm{i} & \text{ in } &\mathit{\Omega}\times (0,T), \label{eq:MQS1} \\
	\tfrac{{\rm d}}{{\rm d} t} \int_\mathit{\Omega} \chi^\top\bm{A} \, {\rm d}\xi + R\, \bm{i} = &\;\bm{v}\label{eq:MQScoupl} & \text{ on } &(0,T), \\
	\bm{A}\times \bm{n}_o  = &\; 0 & \mbox{in }& \partial \mathit{\Omega}\times (0,T),
	\label{eq:MQSbc}\\[2mm]
	\sigma\bm{A}(\cdot,0) = &\; \sigma\bm{A}_0 &\text{ in }&\mathit{\Omega},
	\label{eq:MQSic1}\\
	\int_\mathit{\Omega} \chi^\top\bm{A}(\cdot,0) \, {\rm d}\xi=&\, \int_\mathit{\Omega} \chi^\top\bm{A}_0\,
	{\rm d}\xi,&& \label{eq:MQSic2}
	\end{align}
\end{subequations}
where $\mathit{\Omega}\subset\R^3$ is a~bounded domain with some further properties specified later on, $\bm{A}:\overline{\mathit{\Omega}}\times [0,T]\to\mathbb{R}^3$ is the magnetic vector potential,
$\nu:\mathit{\Omega}\times\mathbb{R}_{\ge0}\to\mathbb{R}_{\ge0}$ is the magnetic reluctivity,
$\sigma:\mathit{\Omega}\to\mathbb{R}_{\ge0}$ is the electric conductivity, and
\mbox{$\bm{v},\bm{i}:{[0,T]}\to\mathbb{R}^m$} are, respectively, the voltage and the electrical current through the electromagnetic conductive contacts. Furthermore,
$\chi:\mathit{\Omega}\to\mathbb{R}^{3\times m}$ is the winding function, which expresses the geometry of $m$ windings,
$R\in\mathbb{R}^{m\times m}$ is the resistance of the winding,
{$\bm{n}_o:\partial \mathit{\Omega}\to\mathbb{R}^3$ is the outward unit normal vector to the boun\-da\-ry $\partial \mathit{\Omega}$,}
and \mbox{$\bm{A}_0: \mathit{\Omega}\to\mathbb{R}^3$}
is the initial value for the magnetic vector potential. The voltage $\bm{v}$ is considered as the input, whereas the current $\bm{i}$ is the output of the system.
The dynamics of the magnetic vector potential is expressed by \eqref{eq:MQS1}, which arises from a~simplification of Maxwell's equations \cite{ChReSt21}, and \eqref{eq:MQSbc}
means that the magnetic flux through the boundary $\partial \mathit{\Omega}$ is zero.
The term $\chi\,\bm{i}$ stands for the external current induced by $m$~windings, and \eqref{eq:MQScoupl} expresses Kirchhoff's voltage law including the applied voltage $\bm{v}$, the resistive voltage~$R\,\bm{i}$, and the term $\tfrac{{\rm d}}{{\rm d} t} \int_\mathit{\Omega} \chi^\top\bm{A} \, {\rm d}\xi$, which is the voltage induced by the electromagnetic field.
Note that $\sigma$ is positive and constant on a~subdomain $\mathit{\Omega}_C\subset\mathit{\Omega}$, whereas it vanishes on the complement of~$\mathit{\Omega}_C$. This implies that the coupled MQS system \eqref{eq:MQS} becomes of degenerate parabolic type.

{Existence, uniqueness and regularity properties of solutions to \eqref{eq:MQS} have been studied in \cite{ArnH12,NicT14,PauPTW21} for some special cases, while \cite{ChReSt21}  presents well-posedness and regularity results for more general setting.}
The purpose of this paper is to investigate the dynamic behavior of the coupled MQS model \eqref{eq:MQS} from a~systems theoretic point of view. In doing so, we analyse passivity of this model by establishing the existence of a~certain energy balance. Further, we  show that the MQS system \eqref{eq:MQS} fits
%{\color{blue} Our goal is to analyse passivity of the coupled MQS model \eqref{eq:MQS} by establishing the existence of a~certain energy balance. Further, we  show that this model}
into the framework of port-Hamiltonian systems. The findings on passivity will be the basis for solution estimates by means of the initial value $\bm{A}_0$ and the input voltage~$\bm{v}$. Besides showing that the free dynamics of~\eqref{eq:MQS} (that is, the solution behavior with $\bm{v}\equiv 0$) are bounded and that the $L^2$-norm of the curl of the magnetic vector potential decays exponentially, we present estimates for
the $L^2$-norm of the output and the magnetic vector potential in terms of the initial value, the input and the material parameters.

The paper is organized as follows.
In Section~\ref{ssec:mqs}, we collect our model assumptions and review {some results from \cite{ChReSt21}} on existence, uniqueness and regularity of solutions of the  coupled MQS system \eqref{eq:MQS}. We also highlight that this system admits a~representation as a~differenial-algebraic system in which the state evolves in an~infinite-dimensional Hilbert space.
This section closes with a~brief introduction to the concept of magnetic energy. It plays an~essential role in Section~\ref{sec:passivity}, where we establish an~energy balance of the MQS system \eqref{eq:MQS} and prove that this system is passive. In Section~\ref{sec:pH}, we introduce infinite-dimensional port-Hamiltonian systems via abstract Dirac and resistive structures and show that the coupled MQS system~\eqref{eq:MQS} belongs to this class. The energy balance from Section~\ref{sec:passivity} is then used in Section~\ref{sec:stability}
to establish the solution estimates for \eqref{eq:MQS}. In Section~\ref{sec:io}, we derive a~bound for the $L^2$-norm of the output in terms of the $L^2$-norm of the input and the initial value. This gives, in particular, an~estimate for the generalization of the \mbox{$\mathcal{H}_\infty$-norm} to the nonlinear infinite-dimensional differential-algebraic case. In Section~\ref{sec:is}, we derive estimates for the magnetic energy and the $L^2$-norm of the magnetic vector potential at some given time $t\geq0$ by means of the initial value and the $L^2$-norm of the input. Finally, in Section~\ref{sec:uncont}, we consider the MQS system~\eqref{eq:MQS} in which the zero voltage is applied. We show that the magnetic energy decays exponentially, and we present estimates for the $L^2$-norm of the magnetic
vector potential at time $t\geq0$.

\section{The MQS System: Assumptions, Solvability and Magnetic Energy}
\label{ssec:mqs}

Throughout this paper, we use the notation and terminology {from} \cite{ChReSt21}.
%of the first part \cite{ChReSt21}.
For ease of reference, we recall here the assumptions on the spatial domain,
the material parameters, the initial value, and the winding function. Moreover, we present
the solution concept and recap the solvability results and some essential properties
of the magnetic energy established in \cite{ChReSt21} {which will be required in the following}.

\subsection{The model assumptions}
First, we impose {some} assumptions on the spatial domain $\mathit{\Omega}$.

\begin{assumption}[Spatial domain, geometry and topology]
	\label{ass:omega}
{\em	The set $\mathit{\Omega}\subset\mathbb{R}^3$ is a~simply connected bounded Lipschitz domain, which is decomposed into two Lipschitz regular, open subsets \mbox{$\mathit{\Omega}_{C}$, $\mathit{\Omega}_{I}{\subset} \mathit{\Omega}$}, called, respectively, {\em conducting} and {\em non-conducting subdomains}, such that
	$\overline{\mathit{\Omega}}_{C}{\subset} {\mathit{\Omega}}$ and
	$\mathit{\Omega}_{I}=\mathit{\Omega}\setminus \overline{\mathit{\Omega}}_C$. Furthermore, $\mathit{\Omega}_C$ is connected, and $\mathit{\Omega}_{I}$ has finitely many connected {\em internal subdomains} $\mathit{\Omega}_{I,1},\ldots,\mathit{\Omega}_{I,q}$ with single boundary components $\Gamma_1,\ldots,\Gamma_q$, respectively,
	and the {\em external subdomain} $\mathit{\Omega}_{I,{\rm ext}}$ which has two boundary components
	$\Gamma_{{\rm ext}}=\overline{\mathit{\Omega}}_{I,{\rm ext}}\cap \overline{\mathit{\Omega}}_{C}$ and $\partial\mathit{\Omega}$.}
\end{assumption}

Next, we state the assumptions on the electric conductivity $\sigma$,
the magnetic reluctivity~$\nu$, and the resistance matrix $R$.

\begin{assumption}[Material parameters]\!\!\label{ass:material}
	{\em
		\begin{enumerate}[\rm a)]
		\item \label{ass:material1}
		The electric conductivity $\sigma:\mathit{\Omega}\to\mathbb{R}_{\geq 0}$ is of the form
		$\sigma=\sigma_C\mathbbm{1}_{\mathit{\Omega}_C}$, where $\sigma_C>0$ and
		$\mathbbm{1}_{\mathit{\Omega}_C}$ denotes the indicator function of the subdomain $\mathit{\Omega}_C$.
		\item \label{ass:material2}
		The magnetic reluctivity $\nu:\mathit{\Omega}\times \mathbb{R}_{\ge0}\to\mathbb{R}_{\ge0}$ has the following properties:
		\begin{enumerate}[\rm (i)]
			\item\label{ass:material2a} $\nu$ is measurable;
			\item\label{ass:material2c} the function $\zeta\mapsto\nu(\xi,\zeta)\zeta$ is strongly monotone with a~monotonicity constant \mbox{$m_{\nu}>0$} independent of $\xi\in\mathit{\Omega}$. In other words, there exists $m_{\nu}>0$ such that
			\[
			\bigl(\nu(\xi,\zeta) \zeta-\nu(\xi,\varsigma)\varsigma\bigr)(\zeta-\varsigma)\geq m_{\nu} (\zeta-\varsigma)^2 \enskip\text{ for all } \xi\in\mathit{\Omega}, \enskip \zeta,\varsigma\in\mathbb{R}_{\ge0};
			\]
			\item\label{ass:material2d}
			the function $\zeta\mapsto\nu(\xi,\zeta)\zeta$ is Lipschitz continuous with a~Lipschitz constant $L_{\nu}>0$ independent of $\xi\in\mathit{\Omega}$. In other words, there exists $L_{\nu}>0$ such that
			\[
			|\nu(\xi,\zeta)\zeta-\nu(\xi,\varsigma)\varsigma| \leq L_{\nu} |\zeta-\varsigma|\quad\text{ for all } \xi\in\mathit{\Omega}, \enskip\zeta,\varsigma\in\mathbb{R}_{\ge0}.
			\]
		\end{enumerate}
		\item\label{ass:resistance} The resistance matrix $R\in\mathbb{R}^{m\times m}$ is symmetric and positive definite.
	\end{enumerate}
	}
\end{assumption}

As the space in which the solutions of the coupled MQS system \eqref{eq:MQS} evolve, we consider the set $X(\mathit{\Omega},\mathit{\Omega}_C)$ of all square integrable functions which are $L^2$-orthogonal to all gradient fields of functions from $H^1_0(\mathit{\Omega})$ being constant on each interface component $\Gamma_1,\ldots,\Gamma_q$ and $\Gamma_{\rm ext}$.
The space $X(\mathit{\Omega},\mathit{\Omega}_C)$ is a~Hilbert space equipped with the standard inner product in ${L^2(\mathit{\Omega};\mathbb{R}^3)}$.
We further consider the space
\begin{equation}
X_0(\curl,\mathit{\Omega},\mathit{\Omega}_C)=H_0(\curl,\mathit{\Omega})\cap X(\mathit{\Omega},\mathit{\Omega}_C),
\label{eq:statespace2}
\end{equation}
which is again a Hilbert space, now provided with the inner product in $H_0(\curl,\mathit{\Omega})$.
For any $\bm{A}\in X(\mathit{\Omega},\mathit{\Omega}_C)$, one has $\mathbbm{1}_{\mathit{\Omega}_C} \bm{A}\in X(\mathit{\Omega},\mathit{\Omega}_C)$, see \cite[Lemma~3.3]{ChReSt21}.
Moreover, \cite[Lemma~3.4]{ChReSt21} states that $X_0(\curl,\mathit{\Omega},\mathit{\Omega}_C)$ is dense in $X(\mathit{\Omega},\mathit{\Omega}_C)$.

The space $X(\mathit{\Omega},\mathit{\Omega}_C)$ enables us to formulate the assumptions on the initial magnetic vector potential $\bm{A}_0$ and the winding function $\chi$. For the latter, we impose a~condition for which it is necessary that all components of the matrix-valued function $\chi:\Omega\to\R^{3\times m}$ are square integrable, i.e., $\chi\in L^2(\Omega;\R^{3\times m})$. Note that
such a~function can, loosely speaking, be canonically identified with a~$m$-tuple of elements of $L^2(\Omega;\R^{3})$. More precisely, we will use the identification
\begin{equation}\label{eq:L23m}
L^2(\Omega;\R^{3\times m})\cong L^2(\Omega;\R^{3})^{1\times m},
\end{equation}
where elements of $L^2(\Omega;\R^{3})^{1\times m}$ are regarded as operators from $\R^m$ to $L^2(\Omega;\R^{3})$. The corresponding operator norm is given by
\begin{equation}\label{eq:chinorm}
\|\chi\|_{L^2(\mathit{\Omega};\mathbb{R}^{3\times m})}=\sqrt{\lambda_{\max}\left(\int_{\mathit{\Omega}} \chi^\top \chi \, {\rm d}\xi\right)},\end{equation}
where $\lambda_{\max}(\mathcal{X})$ stands for the largest eigenvalue of~a~symmetric matrix $\mathcal{X}\in\R^{m\times m}$. Note that the resulting normed space $L^2(\mathit{\Omega};\mathbb{R}^{3\times m})$ is not a~Hilbert space unless $m=1$, since the above norm is not induced by an~inner product.

\begin{assumption}[Initial condition and winding function]\label{ass:init}\
{\em	\begin{enumerate}[\rm a)]
		\item\label{ass:initial}
		The initial magnetic vector potential $\bm{A}_0:\mathit{\Omega}\to\mathbb{R}^3$ belongs to
		$X(\mathit{\Omega},\mathit{\Omega}_C)$.
		\item\label{ass:winding}
		The winding function $\chi:\mathit{\Omega}\to\mathbb{R}^{3\times m}$ belongs, by using the identification \eqref{eq:L23m}, to $X(\mathit{\Omega},\mathit{\Omega}_C)^{1\times m}$.
	\end{enumerate}
	}
\end{assumption}

\subsection{Solutions of the coupled MQS system}
\label{sec:solution}

Before discussing the solution properties of the coupled MQS system \eqref{eq:MQS}, we declare what we mean by solutions.
Let $T>0$ and $\bm{v}\in L^2([0,T];\mathbb{R}^m)$.
We call  $(\bm{A},\bm{i})$ with $\bm{A}\in\overline{\mathit{\Omega}}\times [0,T]\to\mathbb{R}^3$ and $\bm{i}:[0,T]\to\mathbb{R}^m$ a~{\em weak solution} (or just {\em solution})
of the coupled MQS system~\eqref{eq:MQS}, if
\begin{enumerate}[\rm a)]
	\item\label{item:sol1} $\sigma\bm{A}\in C([0,T];X(\mathit{\Omega},\mathit{\Omega}_C))\cap
	H_{\rm loc}^1((0,T];X(\mathit{\Omega},\mathit{\Omega}_C))$ and $\sigma\bm{A}(0)=\sigma\bm{A}_0$,
	\vspace*{1.5mm}
	\item\label{item:sol2}
	$\int_\mathit{\Omega} \chi^\top\bm{A} \, {\rm d}\xi\in C([0,T];\mathbb{R}^m)\cap
	H_{\rm loc}^1((0,T];\mathbb{R}^m)$
	and $\int_\mathit{\Omega} \chi^\top\bm{A}(0) \, {\rm d}\xi=\int_\mathit{\Omega} \chi^\top\bm{A}_0 \, {\rm d}\xi$,\vspace*{1.5mm}
	\item\label{item:sol4} $\bm{A}\in L^2([0,T];X_0(\curl,\mathit{\Omega},\mathit{\Omega}_C))$ and $\bm{i}\in L_{\rm loc}^2((0,T];\mathbb{R}^m)$,
	\vspace*{1.5mm}
	\item\label{item:sol7} for all $\bm{F}\in X_0(\curl,\mathit{\Omega},\mathit{\Omega}_C)$ and almost all $t\in[0,T]$,
	\begin{equation}
	\arraycolsep=2pt
	\begin{array}{rcl}
	\displaystyle{\tfrac{\rm d }{{\rm d} t}\!\int_\mathit{\Omega} \!\sigma \bm{A}(t)\cdot \bm{F}\, {\rm d}\xi
		+\!\!	\int_\mathit{\Omega} \!\nu(\cdot,\|\nabla\!\times\! \bm{A}(t)\|_2)(\nabla\!\times\! \bm{A}(t))
		\cdot(\nabla\!\times\! \bm{F})\, {\rm d}\xi} & \!= &\!\!
	\displaystyle{\int_\mathit{\Omega} \!\chi\, \bm{i}(t)\cdot \bm{F}\,  {\rm d}\xi,} \\[2mm]
	\displaystyle{\tfrac{\rm d }{{\rm d} t}\!\int_\mathit{\Omega} \chi^\top\bm{A}(t)\, {\rm d}\xi+R\,\bm{i}(t) }& = &\! \bm{v}(t).
	\end{array}
	\label{eq:weak}
	\end{equation}
\end{enumerate}
It has been is proven in \cite[Theorem~4.4]{ChReSt21} that, under Assumptions~\ref{ass:omega}--\ref{ass:init}, the solution the coupled MQS system~\eqref{eq:MQS} is unique. Existence and some additional regularity properties of the solution have been established in \cite[Theorem~7.1]{ChReSt21}.
In particular, it has been shown there that for almost all $t\in[0,T]$,
\begin{equation}\label{eq:MQSsol}
\arraycolsep=2pt
\begin{array}{rcl}
\tfrac{\rm d}{{\rm d}t}\left(\sigma\bm{A}(t)\right) + \nabla \times \left(\nu(\cdot,\|\nabla \times \bm{A}(t)\|_2) \nabla \times \bm{A}(t)\right)  & =  & \chi\, \bm{i}(t) , \\[1mm]
\tfrac{\rm d }{{\rm d} t}\displaystyle{\int_\mathit{\Omega} \chi^\top\bm{A}(t)\, {\rm d}\xi + R\,\bm{i}(t)} & = & \bm{v}(t),
\end{array}
\end{equation}
and if, additionally, $\bm{A}_0\in X_0(\curl,\mathit{\Omega},\mathit{\Omega}_C)$, then the solution of \eqref{eq:MQS} fulfills
\begin{align*}
\nabla\times\bm{A}\in&\, L^\infty([0,T];X(\mathit{\Omega},\mathit{\Omega}_C)),&
\sigma\bm{A}\in&\, H^1([0,T];X(\mathit{\Omega},\mathit{\Omega}_C)),\\
\int_\mathit{\Omega} \chi^\top\bm{A} \, {\rm d}\xi\in&\, H^1([0,T];\R^m),&
\bm{i}\in&\, L^2([0,T];\R^m).
\end{align*}

\begin{remark}\label{rem:solinf}
	The above solution concept can easily be extended to the positive real axis {$\mathbb{R}_{\geq 0}$}
	by saying that, for $\bm{v}\in L_{\rm loc}^2({\mathbb{R}_{\geq 0}};\mathbb{R}^m)$, $(\bm{A},\bm{i})$ with $\bm{A}:\overline{\mathit{\Omega}}\times {\mathbb{R}_{\geq 0}}\to\mathbb{R}^3$ and
	$\bm{i}:{\mathbb{R}_{\geq 0}}\to\mathbb{R}^m$ is a~solution of the coupled MQS system \eqref{eq:MQS} if for any $T>0$, the restriction of $(\bm{A},\bm{i})$ to $[0,T]$ is a~solution of \eqref{eq:MQS}. In this case, existence
	and uniqueness of solutions immediately follow from the results on finite time intervals, and the above regularity results can be adapted straightforwardly.
\end{remark}

The coupled MQS model \eqref{eq:MQS} can be regarded as an abstract differential-algebraic {control} system
\begin{equation}
\arraycolsep=2pt
\begin{array}{rcl}
\tfrac{\rm d}{{\rm d}t}\cEl x(t)&=&\cAl(x(t))+\cBl u(t),\quad \cEl x(0)=\cEl x_0,\\
y(t)&=& \cCl x(t)
\end{array}
\label{eq:inf_sys}
\end{equation}
with the input $u(t)=\bm{v}(t)$,
the state $x(t)=(\bm{A}(t),\bm{i}(t))$,
the output $y(t)=\bm{i}(t)$,
and the initial value $x_0=(\bm{A}_0,0)$.
Further, the linear operators $\cEl$, $\cBl$, $\cCl$ and the nonlinear operator $\cAl$ are given by
\begin{subequations}\label{eq:EABCop}
	{%\allowdisplaybreaks
		\begin{align}
		\cEl:\; &&X(\mathit{\Omega},\mathit{\Omega}_C)\times \mathbb{R}^m&\,\to\, X(\mathit{\Omega},\mathit{\Omega}_C) \times \mathbb{R}^m,\\
		&&(\bm{A},\bm{i})&\,\mapsto\,\Bigl(\sigma\bm{A},\int_{\mathit{\Omega}} \chi^T\bm{A} \,{\rm d}\xi\Bigr),\nonumber\\[2.5mm]
		\cAl:\; && \hspace*{-10mm} X_0(\curl,\mathit{\Omega},\mathit{\Omega}_C) \times \mathbb{R}^m &\, \to\, X_0(\curl,\mathit{\Omega},\mathit{\Omega}_C)'\times \mathbb{R}^m,\\
		&&(\bm{A},\bm{i})&\,\mapsto\, \bigl(-\cAl_{11}^{}(\bm{A})+\chi\,\bm{i},-R\,\bm{i}\bigr),\qquad\qquad\nonumber\\[2.5mm]
		\cBl:\; && \mathbb{R}^m&\,\to\, X_0(\curl,\mathit{\Omega},\mathit{\Omega}_C)' \times \mathbb{R}^m,\\
		&&\bm{v}&\,\mapsto\,(0,\bm{v}),\nonumber\\[2.5mm]
		\cCl:\; && X(\mathit{\Omega},\mathit{\Omega}_C) \times\mathbb{R}^m&\,\to\, \mathbb{R}^m,\\
		&&(\bm{A},\bm{i})&\,\mapsto\,(0,\bm{i}), \nonumber
		\end{align}}
\end{subequations}
where
\begin{align*}
\qquad\cAl_{11}:\;&& X_0(\curl,\mathit{\Omega},\mathit{\Omega}_C) &\,\to \,X_0(\curl,\mathit{\Omega},\mathit{\Omega}_C)',\\%\label{eq:A11}\\
&&\bm{A}&\,\mapsto\,\displaystyle{\biggl(\bm{F}\mapsto
	\int_\mathit{\Omega} \nu(\cdot,\|\nabla\times \bm{A}\|_2)(\nabla\times \bm{A})\cdot(\nabla\times \bm{F})\, {\rm d}\xi\biggr).}\nonumber %\\[2mm]
\end{align*}
Hereby, the first ``block row'' in $\tfrac{\rm d}{{\rm d}t}\cEl x(t)=\cAl(x(t))+\cBl u(t)$ corresponds to \eqref{eq:MQS1} and also includes the boundary condition \eqref{eq:MQSic1}, as this is included in the domain of $\cAl_{11}$. The second ``block row'' expresses the coupling relation \eqref{eq:MQScoupl}. The initial condition $\cEl x(0)=\cEl x_0$ comprises \eqref{eq:MQSic1} and \eqref{eq:MQSic2}, and the output equation \mbox{$y(t)=\cCl x(t)$} states that the output $y$ is given by the current $\bm{i}$.

\subsection{Magnetic energy}
\label{ssec:energ}

The magnetic energy plays an~important role in our forthcoming discussions.
For the magnetic reluctivity $\nu$ as in Assumption~\ref{ass:material}\,\ref{ass:material2}), consider the~function \mbox{$\vartheta:\mathit{\Omega}\times\mathbb{R}_{\ge0}\to\mathbb{R}_{\ge0}$} given by
\begin{equation}
\vartheta(\xi,\varrho)=\frac{1}{2}\int_0^\varrho \nu(\xi,\sqrt{\zeta})\, {\rm d}\zeta = \int_0^{\sqrt{\varrho}} \nu(\xi,\zeta)\zeta\,{\rm d}\zeta.
\label{eq:gamma}
\end{equation}
Further, define a~functional
\begin{equation}\label{eq:varphiA}
\begin{aligned}
E:&&X(\mathit{\Omega},\mathit{\Omega}_C)\to&\, \mathbb{R} \cup\{\infty\},\\
&& \bm{A}\mapsto&\,\begin{cases}\displaystyle{\int_{\mathit{\Omega}} \vartheta\bigl(\xi,\|\nabla\times \bm{A}(\xi )\|_2^2\bigr)\,{\rm d}\xi}\; & \text{if } \bm{A}{}\in X_0(\curl,\mathit{\Omega},\mathit{\Omega}_C),\\
\infty\; & \text{else}.\end{cases}
\end{aligned}\end{equation}
Given a~magnetic vector potential $\bm{A}(t)\in X_0(\curl,\mathit{\Omega},\mathit{\Omega}_C)$, the scalar function \linebreak \mbox{$\xi\mapsto \vartheta\bigl(\xi,\|\nabla\times \bm{A}(\xi,t)\|_2^2\bigr)$} is the {\em magnetic energy density}, and $E(\bm{A}(t))$ is the {\em magnetic energy}.
Many properties of the magnetic energy are collected in \cite[Proposition~5.2]{ChReSt21}. In particular, it has been shown there that $E$ is convex, and for all
$\bm{A}\in X_0(\curl,\mathit{\Omega},\mathit{\Omega}_C)$, it fulfills the estimates
\begin{equation} \label{eq:equivalence}
\frac{m_\nu}{2} \| \nabla \times \bm{A}\|_{L^2(\mathit{\Omega};\mathbb{R}^3)}^2 \leq E(\bm{A} ) \leq \frac{L_\nu}{2} \| \nabla \times \bm{A}\|_{L^2(\mathit{\Omega};\mathbb{R}^3)}^2,
\end{equation}
where $m_\nu,L_\nu>0$ are the constants as in Assumption~\ref{ass:material}\,\ref{ass:material2}).

\section{Passivity}
\label{sec:passivity}

In this section, we investigate passivity of the coupled MQS system~\eqref{eq:MQS}.
Passive systems form a~special class of dissipative dynamical systems which have extensively been studied in \cite{HillM80,Will72}. They are of particular interest in circuit si\-mu\-lation \cite{AndeV73} and controller design \cite{BroLME07}. Roughly speaking, a~system is passive if it does not generate energy or, equivalently, the energy is dissipated. Mathematically, passivity can be defined in terms of a~storage function. An~important property of passive systems is that an~interconnection of passive subsystems often provides a~new passive system.

Consider a~general abstract differential-algeb\-raic control system
\begin{equation}
\arraycolsep=2pt
\begin{array}{rcl}
\tfrac{\rm d}{{\rm d}t}\cEl(x(t))&=&\cAl(x(t))+\cBl(u(t)),\\
y(t)&=& \cCl(x(t))
\end{array}
\label{eq:inf_sys2}
\end{equation}
with (possibly nonlinear) operators $\cEl:X\to Z$, $\cAl:D(\cAl)\subset X\to Z$, $\cBl: U\to Z$ and
$\cCl: X\to U'$ for some Hilbert spaces\, $X$, $Z$ and  $U$.
The input $u\in L^2([0,T];U)$ is called {\em admissible with the initial condition} $\cEl(x(0))=\cEl(x_0)$, if \eqref{eq:inf_sys2} has a~solution $x:{[0,T]}\to X$ with $\cEl(x(0))=\cEl(x_0)$ and $y\in L^2([0,T];U')$. Similarly to the finite-dimensional case \cite{Will72}, we define passivity for the infinite-dimensional system \eqref{eq:inf_sys2} as follows.

\begin{definition}[Passivity]
	A~function $\mathcal{S}: {X}\to\mathbb{R}_{\ge0}\cup\{\infty\}$ is called a~{\em storage function  for passivity} of system \eqref{eq:inf_sys2}, if
		for all $T>0$, $x_0\in{X}$ with $\mathcal{S}(x_0)<\infty$ and all inputs $u\in L^2([0,T]; U)$ admissible with the initial condition
		$\cEl(x(0))=\cEl(x_0)$, the following conditions are fulfilled:
		\begin{enumerate}[\rm a)]
			\item $t\mapsto \mathcal{S}(x(t))$ is continuous as a function from $[0,T]$ to $\mathbb{R}_{\ge0}\cup\{\infty\}$;
			\item for all $0\leq t_0\leq t_1 \leq T$, it holds
			the {\em dissipation inequality}
			\begin{equation}
			\mathcal{S}(x(t_1))-\mathcal{S}(x(t_0))\leq \int_{t_0}^{t_1}\langle u(\tau), y(\tau)\rangle_{2}\,{\rm d}\tau,
			\label{eq:pass}
			\end{equation}
		\end{enumerate}
		where $\langle\cdot,\cdot\rangle_2$ stands for the standard Euclidean inner product in $\R^m$.
		System \eqref{eq:inf_sys2} is called {\em passive}, if there exists a storage function for passivity.
\end{definition}

The dissipation inequality \eqref{eq:pass} typically has the interpretation of an~energy ba\-lan\-ce. Namely, $\mathcal{S}(x(t))$ expresses the energy of the state $x(t)$, whereas the energy extracted from the system is given by $\int_{t_0}^{t_1} \langle u(\tau), y(\tau)\rangle_{2}\,{\rm d}\tau$. The nonnegative term
\[\int_{t_0}^{t_1} \langle u(\tau), y(\tau)\rangle_{2}\,{\rm d}\tau-\mathcal{S}(x(t_1))+\mathcal{S}(x(t_0))\]
is the energy which is dissipated by the system on the time interval $[t_0,t_1]$.

\newpage
\begin{remark}\
	\begin{enumerate}[\rm a)]
		\item Assume that the initial value $x_0$ fulfills $\mathcal{S}(x_0)<\infty$ and $u\in L^2([0,T];U)$ is admissible with the initial condition $\cEl(x(0))=\cEl(x_0)$. Then it immediately follows from the dissipation inequality \eqref{eq:pass} that $\mathcal{S}(x(t))<\infty$ for all $t\in[0,T]$.
		\item If, additionally,
		$\mathcal{S}(0)=0$, then for system \eqref{eq:inf_sys2} initialized with $\cEl(x(0))=\cEl(0)$ and for all inputs $u\in L^2([0,T]; U)$ admissible with this initial condition, the dissipation inequality \eqref{eq:pass} implies
		\[\mathcal{S}(x(t))\leq \int_{0}^{t} \langle u(\tau), y(\tau)\rangle_{2}\,{\rm d}\tau\quad\text{ for all } t\in [0,T].\]
		In this case, by the nonnegativity of $\mathcal{S}$, we have
		\[0\leq \int_{0}^{t} \langle u(\tau), y(\tau)\rangle_{2}\,{\rm d}\tau.\]
		Systems with this property are called {\em input-output passive}. It has been shown in \textup{\cite{HillM80}} that reachable
		and stabilizable input-output passive finite-dimensional standard state space systems possess a~storage function. In particular, such systems are passive. Passivity of infinite-dimensional linear systems has been studied in
		\textup{\cite{ReiJ08,Staf02}}.
	\end{enumerate}
\end{remark}

We now return to the coupled MQS system \eqref{eq:MQS}.
We have seen in Section~\textup{\ref{sec:solution}} that it can be written as an~abstract differential-algebraic control system \eqref{eq:inf_sys} with the operators as in \eqref{eq:EABCop} and, in particular, \mbox{$U=U'=\R^m$}. Then the existence result in \textup{\cite[Theorem~7.1]{ChReSt21}}
consequences that for \mbox{$\bm{A}_0\in X_0(\curl,\mathit{\Omega},\mathit{\Omega}_C)$},
any input \mbox{$u=\bm{v}\in L^2([0,T];\R^m)$} is admissible with the initial condition
$$
\cEl (\bm{A}(0),\bm{i}(0))=\cEl (\bm{A}_0,0).
$$

Next, we show that the function $\mathcal{S}_{\rm MQS}:X(\mathit{\Omega},\mathit{\Omega}_C)\times\R^m\to \mathbb{R}_{\ge0}\cup\{\infty\}$
defined by the magnetic energy as in \eqref{eq:varphiA}, i.e.,
\begin{equation}
\mathcal{S}_{\rm MQS}(\bm{A}(t),\bm{i}(t))= E(\bm{A}(t))
\label{eq:storageMQS}
\end{equation}
is a~storage function for passivity of the coupled MQS system~\eqref{eq:MQS}.
A~formal consideration by invoking
the chain rule for $\vartheta$ being as in \eqref{eq:gamma}, and the integration by parts formula with the weak curl operator, see \cite[eq.~(2.1)]{ChReSt21},
yields that the solution of system~\eqref{eq:MQS} fulfills
\begin{eqnarray*}
	{\tfrac{\rm d}{{\rm d}t}\mathcal{S}_{\rm MQS}(\bm{A}(t),\bm{i}(t))} &=&
	\displaystyle{
		\int_\mathit{\Omega}\tfrac{\partial}{\partial\varrho}\vartheta \bigl(\cdot,\|\nabla\times\bm{A}(t)\|_2^2\bigr)
		\tfrac{{\rm d}}{{\rm d} t}\|\nabla\times \bm{A}(t)\|_2^2 \,{\rm d}\xi} \\[2mm]
	%&=& \displaystyle{\int_\mathit{\Omega} \nu\bigl(\xi,\|\nabla\times \bm{A}(\xi,t)\|_2\bigr)
	%	\bigl(\nabla\times \tfrac{\partial}{\partial t} \bm{A}(\xi,t)\bigr)\cdot
	%	\bigl(\nabla\times \bm{A}(\xi,t)\bigr)\,{\rm d}\xi}\\[2mm]
	%&=&
	%\displaystyle{\int_\mathit{\Omega} \tfrac{\partial}{\partial t} \bm{A}(\xi,t)\cdot
	%	\bigl(\nabla\times (\nu(\xi,\|\nabla\times \bm{A}(\xi,t)\|_2) \nabla\times \bm{A}(\xi,t))\bigr)\,{\rm d}\xi} \\[2mm]
	%& = & \displaystyle{-\int_\mathit{\Omega} \sigma(\xi) \bigl\|\tfrac{\partial}{\partial t} \bm{A}(\xi,t)\bigr\|_2^2\,{\rm d}\xi
	%	+ \int_\mathit{\Omega} \tfrac{\partial}{\partial t} \bm{A}(\xi,t) \cdot  \chi(\xi)\bm{i}(t)\,{\rm d}\xi} \\[2mm]
	&=& \displaystyle{-\int_\mathit{\Omega} \bigl\|\tfrac{{\rm d}}{{\rm d} t} \sqrt{\sigma}\bm{A}(t)\bigr\|_2^2\,{\rm d}\xi
		-\langle\bm{i}(t), R\,\bm{i}(t)\rangle_{2}+\langle\bm{v}(t),\bm{i}(t)\rangle_{2}}.
\end{eqnarray*}
Integrating this equation on ${[t_0,t_1]}$ and using that $u=\bm{v}$, $y=\bm{i}$,
and $R$ is a~positive definite matrix, we obtain
\[\begin{aligned}
\mathcal{S}_{\rm MQS}&(\bm{A}({t_1}),\bm{i}({t_1}))-\mathcal{S}_{\rm MQS}(\bm{A}{(t_0)},\bm{i}{(t_0)})\\=\;&-\int_{{t_0}}^{{t_1}}\int_\mathit{\Omega}\bigl\| \tfrac{{\rm d}}{{\rm d} \tau} \sqrt{\sigma}\bm{A}(\tau)\bigr\|_2^2\,{\rm d}\xi \,{\rm d}\tau-\int_{{t_0}}^{{t_1}}\langle\bm{i}(\tau),R\,\bm{i}(\tau)\rangle_{2}\,{\rm d}\tau+\int_{{t_0}}^{{t_1}} \langle\bm{v}(\tau),\bm{i}(\tau)\rangle_{2}\,{\rm d}\tau\\
\leq\;&\int_{{t_0}}^{{t_1}} \langle u(\tau), y(\tau)\rangle_{2}\,{\rm d}\tau.
\end{aligned}\]
In particular, the non-negative expression
$$
\int_{{t_0}}^{{t_1}}\int_\mathit{\Omega} \bigl\|\tfrac{{\rm d}}{{\rm d} \tau} \sqrt{\sigma}\bm{A}(\tau)\bigr\|_2^2\,{\rm d}\xi \,{\rm d}\tau+\int_{{t_0}}^{{t_1}} \langle\bm{i}(\tau), R\,\bm{i}^{}(\tau)\rangle_{2}\,{\rm d}\tau \geq 0
$$
stands for the energy dissipated by the system on the time interval ${[t_0,t_1]}$. Next, we show that a~rigorous analysis indeed leads to the above dissipation inequality.

\begin{theorem}[Energy balance for the coupled MQS system]\label{thm:energybalance}
	Assume that \mbox{$\mathit{\Omega}\subset\mathbb{R}^3$} with a~subdomain $\mathit{\Omega}_C$ satisfies Assumption~\textup{\ref{ass:omega}}. Further, let
	Assumptions~\textup{\ref{ass:material}} and~\textup{\ref{ass:init}} be fulfilled,
	$T>0$, $\bm{v}\in L^2([0,T];\mathbb{R}^m)$, and
	let $(\bm{A},\bm{i})$ be a~solution of the coupled MQS system \eqref{eq:MQS}.
	Then the magnetic energy function $E$ as defined in \eqref{eq:varphiA} has the following properties:
	\begin{align}
	\bigl(t\mapsto  E(\bm{A}(t))\bigr)\in&\, L^1({[0,T]})\cap W^{1,1}_{\rm loc}((0,T]),\label{eq:MQSBarbsyspass1} \\
	\bigl(t\mapsto t\, E(\bm{A}(t))\bigr)\in&\, L^\infty([0,T]).\label{eq:MQSBarbsyspass2}
	\end{align}
	Further, for all $0<t_0\leq t_1\leq T$, it holds
	\begin{multline}
	E(\bm{A}(t_1)) - E(\bm{A}(t_0))
	=-\int_{t_0}^{t_1}\|\tfrac{\rm d}{{\rm d}\tau} \sqrt{\sigma}\bm{A}(\tau)\|^2_{L^2(\mathit{\Omega};\mathbb{R}^3)}{\rm d}\tau\\-\int_{t_0}^{t_1}\langle\bm{i}(\tau),R\,\bm{i}(\tau)\rangle_{2}\,{\rm d}\tau+\int_{t_0}^{t_1} \langle\bm{v}(\tau), \bm{i}(\tau)\rangle_{2}\,{\rm d}\tau.
	\label{eq:MQSpass}\end{multline}
	If, additionally, $\bm{A}_{0}\in X_0(\curl,\mathit{\Omega},\mathit{\Omega}_C)$, then
	\begin{equation}
	\left(t\mapsto  E(\bm{A}(t))\right)\in\, W^{1,1}([0,T]),
	\label{eq:EinW11}
	\end{equation}
	and the identity \eqref{eq:MQSpass} holds for all $0\leq t_0\leq t_1\leq T$.
\end{theorem}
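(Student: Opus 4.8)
The plan is to establish the energy balance \eqref{eq:MQSpass} first under the extra regularity hypothesis $\bm A_0\in X_0(\curl,\mathit\Omega,\mathit\Omega_C)$, where the solution satisfies $\sigma\bm A\in H^1([0,T];X(\mathit\Omega,\mathit\Omega_C))$, $\nabla\times\bm A\in L^\infty([0,T];X(\mathit\Omega,\mathit\Omega_C))$, $\int_\mathit\Omega\chi^\top\bm A\,{\rm d}\xi\in H^1([0,T];\R^m)$ and $\bm i\in L^2([0,T];\R^m)$, and then to pass to the general case by an approximation/restriction argument. In the regular case, the first step is to justify that $t\mapsto E(\bm A(t))$ is absolutely continuous on $[0,T]$ with an integrable derivative, i.e.\ to prove \eqref{eq:EinW11}. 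This is the analytic heart of the matter: $E$ is a convex, lower semicontinuous functional and one wants a chain rule of the form $\tfrac{\rm d}{{\rm d}t}E(\bm A(t))=\langle E'(\bm A(t)),\tfrac{\rm d}{{\rm d}t}\bm A(t)\rangle$ along the trajectory. Because $\bm A$ itself is not differentiable in time (only $\sigma\bm A$ is, on the conducting part, and $\nabla\times\bm A$ is only weakly differentiable), I would not differentiate $\bm A$ directly; instead I would use the weak formulation \eqref{eq:weak} tested with $\bm F=\bm A(t)$ (which is admissible since $\bm A(t)\in X_0(\curl,\mathit\Omega,\mathit\Omega_C)$ for a.e.\ $t$), giving
\begin{align*}
\tfrac{\rm d}{{\rm d}t}\tfrac12\|\sqrt\sigma\bm A(t)\|_{L^2}^2
+\langle\cAl_{11}(\bm A(t)),\bm A(t)\rangle
=\langle\chi\,\bm i(t),\bm A(t)\rangle,
\end{align*}
together with $\tfrac{\rm d}{{\rm d}t}\big(\int_\mathit\Omega\chi^\top\bm A(t)\,{\rm d}\xi\big)^\top\bm i(t)+\bm i(t)^\top R\,\bm i(t)=\bm v(t)^\top\bm i(t)$ from the second row of \eqref{eq:weak}. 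The quadratic-in-$\sqrt\sigma\bm A$ time derivative is legitimate because $\sqrt\sigma\bm A=\mathbbm 1_{\mathit\Omega_C}\sqrt{\sigma_C}\,\bm A$ inherits the $H^1$-in-time regularity; here $\|\tfrac{\rm d}{{\rm d}\tau}\sqrt\sigma\bm A(\tau)\|_{L^2}^2$ is exactly the dissipation term appearing in \eqref{eq:MQSpass}.

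The key remaining identification is $\langle\cAl_{11}(\bm A(t)),\bm A(t)\rangle=\tfrac{\rm d}{{\rm d}t}E(\bm A(t))$ for a.e.\ $t$. I would prove this via the monotone-operator/convex-function chain rule: $\cAl_{11}$ is the Gâteaux derivative (subdifferential) of $E$ on $X_0(\curl,\mathit\Omega,\mathit\Omega_C)$, which follows by differentiating \eqref{eq:varphiA}–\eqref{eq:gamma} under the integral using $\tfrac{\partial}{\partial\varrho}\vartheta(\xi,\varrho)=\tfrac12\nu(\xi,\sqrt\varrho)$ and the Lipschitz/monotonicity bounds of Assumption~\ref{ass:material}\,\ref{ass:material2}); the estimates \eqref{eq:equivalence} and \cite[Proposition~5.2]{ChReSt21} supply the needed continuity and convexity. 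Then one invokes the standard result (Brezis' lemma on the derivative of $t\mapsto\varphi(w(t))$ for a convex l.s.c.\ $\varphi$ and $w$ absolutely continuous into the pivot space) — but note the subtlety that $\bm A(\cdot)$ is absolutely continuous only \emph{after} applying $\sigma$, so I would combine the weak formulation above with the fact that $E(\bm A(\cdot))\in L^1$ (from $\nabla\times\bm A\in L^\infty$ in the regular case and \eqref{eq:equivalence}) and a density/approximation argument in time: approximate $\bm v$ by smooth controls, obtain smoother solutions $\bm A_k$ for which the chain rule is classical, derive \eqref{eq:MQSpass} for $\bm A_k$, and pass to the limit using the continuous dependence / energy estimates. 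Absolute continuity of $E(\bm A(\cdot))$ then drops out because its derivative equals the right-hand side integrand of \eqref{eq:MQSpass}, which is in $L^1([0,T])$ under the regular hypotheses.

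For the general case $\bm A_0\in X(\mathit\Omega,\mathit\Omega_C)$, I would exploit the instantaneous smoothing in \cite[Theorem~7.1]{ChReSt21}: for every $t_0>0$ the shifted solution has $\bm A(t_0)\in X_0(\curl,\mathit\Omega,\mathit\Omega_C)$, so the already-proven identity applies on $[t_0,T]$, giving \eqref{eq:MQSpass} for all $0<t_0\le t_1\le T$ and $t\mapsto E(\bm A(t))\in W^{1,1}_{\rm loc}((0,T])$. It remains to prove the integrability near $t=0$: \eqref{eq:MQSBarbsyspass1}, i.e.\ $E(\bm A(\cdot))\in L^1([0,T])$, and the stronger weighted bound \eqref{eq:MQSBarbsyspass2}, $t\,E(\bm A(t))\in L^\infty([0,T])$. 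By \eqref{eq:equivalence} this reduces to estimating $\int_0^T\|\nabla\times\bm A(t)\|_{L^2}^2\,{\rm d}t$ and $\sup_t t\|\nabla\times\bm A(t)\|_{L^2}^2$. The first follows from item \ref{item:sol4} of the solution definition ($\bm A\in L^2([0,T];X_0(\curl,\cdot))$). For the weighted $L^\infty$ bound I would integrate \eqref{eq:MQSpass} on $(t_0,t_1)$, multiply by a suitable weight, and use a Gronwall-type / interpolation argument (the standard parabolic trick: $t\,E(\bm A(t))$ stays bounded because $\tfrac{\rm d}{{\rm d}t}\big(t E(\bm A(t))\big)=E(\bm A(t))+t\tfrac{\rm d}{{\rm d}t}E(\bm A(t))$ and the last term is $\le$ the controllable input power minus the nonnegative dissipation), combined with the a priori bound $\int_0^T\|\nabla\times\bm A\|_{L^2}^2\,{\rm d}t\le C(\|\bm A_0\|^2+\|\bm v\|_{L^2}^2)$ from the well-posedness theory; letting $t_0\to0$ and using the weak lower semicontinuity of $E$ together with $\sigma\bm A\in C([0,T];X(\mathit\Omega,\mathit\Omega_C))$ closes the argument and also yields continuity of $E(\bm A(\cdot))$ as an $[0,\infty]$-valued function at $t=0$. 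The main obstacle I anticipate is making the chain rule for $E$ along the trajectory fully rigorous given that only $\sigma\bm A$ (not $\bm A$) is differentiable in time on all of $\mathit\Omega$: the cleanest route is the smooth-control approximation plus passage to the limit rather than a direct chain-rule theorem, since the latter would require a chain rule for convex functionals evaluated along paths that are absolutely continuous only after composition with the degenerate operator $\sigma$.
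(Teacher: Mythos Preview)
Your plan contains a concrete computational error that breaks the argument. Testing the first equation of \eqref{eq:weak} with $\bm F=\bm A(t)$ produces
\[
\tfrac{\rm d}{{\rm d}t}\tfrac12\|\sqrt\sigma\,\bm A(t)\|_{L^2}^2+\langle\cAl_{11}(\bm A(t)),\bm A(t)\rangle=\langle\chi\,\bm i(t),\bm A(t)\rangle,
\]
but the ``key identification'' you state, $\langle\cAl_{11}(\bm A(t)),\bm A(t)\rangle=\tfrac{\rm d}{{\rm d}t}E(\bm A(t))$, is false: the chain rule gives $\tfrac{\rm d}{{\rm d}t}E(\bm A(t))=\langle\cAl_{11}(\bm A(t)),\tfrac{\rm d}{{\rm d}t}\bm A(t)\rangle$, not the pairing with $\bm A(t)$ itself (already in the linear case $\nu\equiv1$ one has $\langle\cAl_{11}(\bm A),\bm A\rangle=2E(\bm A)$). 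Likewise, testing with $\bm A(t)$ yields $\tfrac{\rm d}{{\rm d}t}\tfrac12\|\sqrt\sigma\,\bm A\|^2$, not the dissipation term $\|\tfrac{\rm d}{{\rm d}t}\sqrt\sigma\,\bm A\|^2$ appearing in \eqref{eq:MQSpass}. To reach \eqref{eq:MQSpass} by a direct computation one must test with $\bm F=\tfrac{\rm d}{{\rm d}t}\bm A(t)$, which is exactly what you were trying to avoid because of the degeneracy of~$\sigma$. Your smooth-control approximation does not help here either: even for smooth $\bm v$, the quasilinear, degenerate-parabolic nature of the problem does not deliver enough time regularity of $\bm A$ on $\mathit\Omega_I$ to make $\tfrac{\rm d}{{\rm d}t}\bm A(t)$ an admissible test function pointwise in $t$.

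The paper sidesteps this entirely. It recalls that \eqref{eq:MQS} has been cast in \cite{ChReSt21} as an abstract differential-algebraic gradient system driven by the subdifferential of $E$, and then simply invokes \cite[Corollary~6.4]{ChReSt21}. That abstract result (in the spirit of Barbu's chain rule for subgradient flows) provides \eqref{eq:MQSBarbsyspass1}, \eqref{eq:MQSBarbsyspass2}, \eqref{eq:MQSpass} and, under $\bm A_0\in X_0(\curl,\mathit\Omega,\mathit\Omega_C)$, also \eqref{eq:EinW11}, all at once, precisely because the chain rule along trajectories is built into the abstract theory and accommodates the degeneracy of~$\cEl$. Your treatment of the general initial data (shift to $t_0>0$, use $\bm A\in L^2([0,T];X_0(\curl,\cdot))$ for $L^1$-integrability, and the $\tfrac{\rm d}{{\rm d}t}(tE)$ trick for the weighted bound) is reasonable, but it rests on having \eqref{eq:MQSpass} in the regular case, which your computation does not deliver.
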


\begin{proof}
	It has been shown in the proof of \cite[Theorem~7.1]{ChReSt21} that the coupled MQS system \eqref{eq:MQS} can be formulated as an~abstract differential-algebraic gradient system with
	a~subgradient of $E$.
	This system meets the assumptions of \cite[Corollary~6.4]{ChReSt21}, which leads to the existence of a~unique solution $(\bm{A},\bm{i})$
	satisfying \eqref{eq:MQSBarbsyspass1}, \eqref{eq:MQSBarbsyspass2} and~\eqref{eq:MQSpass}.
	If, further, $\bm{A}_{0}\in X_0(\curl,\mathit{\Omega},\mathit{\Omega}_C)$, then
	\eqref{eq:EinW11} is fulfilled,
	and the identity \eqref{eq:MQSpass} even holds for all $0\leq t_0\leq t_1\leq T$.
\end{proof}

As a~consequence of Theorem~\ref{thm:energybalance}, we obtain the following result
establishing the passivity of the coupled MQS system \eqref{eq:MQS}.

\begin{corollary}
	Under the assumptions of Theorem~\textup{\ref{thm:energybalance}}, the function $\mathcal{S}_{\rm MQS}$ as in~\eqref{eq:storageMQS} is a~storage function for passivity of the coupled MQS system \eqref{eq:MQS}, and thus, this system is passive.
\end{corollary}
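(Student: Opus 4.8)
The plan is to check directly that $\mathcal{S}_{\rm MQS}$ meets the three requirements in the definition of a~storage function for passivity, applied to the abstract system \eqref{eq:inf_sys} with $U=U'=\mathbb{R}^m$, state $x=(\bm{A},\bm{i})$, input $u=\bm{v}$ and output $y=\bm{i}$. First, by \eqref{eq:storageMQS} and \eqref{eq:varphiA} we have $\mathcal{S}_{\rm MQS}(\bm{A},\bm{i})=E(\bm{A})$, and the lower bound in \eqref{eq:equivalence} shows $E(\bm{A})\ge\tfrac{m_\nu}{2}\|\nabla\times\bm{A}\|_{L^2(\mathit{\Omega};\mathbb{R}^3)}^2\ge0$ when $\bm{A}\in X_0(\curl,\mathit{\Omega},\mathit{\Omega}_C)$ and $E(\bm{A})=\infty$ otherwise, so $\mathcal{S}_{\rm MQS}$ indeed takes values in $\mathbb{R}_{\ge0}\cup\{\infty\}$.

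Next I would pin down the initial data of finite storage. Since $x_0=(\bm{A}_0,0)$, the condition $\mathcal{S}_{\rm MQS}(x_0)<\infty$ is, by \eqref{eq:varphiA} together with the upper bound in \eqref{eq:equivalence}, equivalent to $\bm{A}_0\in X_0(\curl,\mathit{\Omega},\mathit{\Omega}_C)$. For such $\bm{A}_0$, the existence result \cite[Theorem~7.1]{ChReSt21} (see also the discussion preceding \eqref{eq:storageMQS}) guarantees that any $u=\bm{v}\in L^2([0,T];\mathbb{R}^m)$ is admissible with the initial condition $\cEl(\bm{A}(0),\bm{i}(0))=\cEl(\bm{A}_0,0)$, with the corresponding solution $(\bm{A},\bm{i})$ being the one treated in Theorem~\ref{thm:energybalance} and with $y=\bm{i}\in L^2([0,T];\mathbb{R}^m)=L^2([0,T];U')$, as the notion of admissibility demands.

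The two defining properties then follow from Theorem~\ref{thm:energybalance}. Since $\bm{A}_0\in X_0(\curl,\mathit{\Omega},\mathit{\Omega}_C)$, property \eqref{eq:EinW11} gives $t\mapsto E(\bm{A}(t))\in W^{1,1}([0,T])$, hence this map is continuous on the whole closed interval $[0,T]$ with finite values, establishing condition a). For condition b) I would invoke the energy identity \eqref{eq:MQSpass}, which under this extra regularity holds for all $0\le t_0\le t_1\le T$; discarding its nonnegative $\|\tfrac{\rm d}{{\rm d}\tau}\sqrt{\sigma}\bm{A}(\tau)\|^2_{L^2(\mathit{\Omega};\mathbb{R}^3)}$-term, using positive definiteness of $R$ to drop the nonnegative $\langle\bm{i}(\tau),R\,\bm{i}(\tau)\rangle_2$-term, and substituting $u=\bm{v}$, $y=\bm{i}$ yields the dissipation inequality \eqref{eq:pass}. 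Thus $\mathcal{S}_{\rm MQS}$ is a~storage function for passivity, and the system is passive.

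There is no genuine obstacle here: the analytic content — the energy balance and the regularity of $t\mapsto E(\bm{A}(t))$ up to $t=0$ — is already furnished by Theorem~\ref{thm:energybalance}, and the corollary reduces to matching its conclusions against the definition of passivity. The only step warranting attention is the equivalence $\mathcal{S}_{\rm MQS}(x_0)<\infty\iff\bm{A}_0\in X_0(\curl,\mathit{\Omega},\mathit{\Omega}_C)$, precisely because it is this condition that upgrades the conclusions of Theorem~\ref{thm:energybalance} from $(0,T]$ to the closed interval $[0,T]$, so that conditions a) and b) hold on $[0,T]$ as the definition requires.
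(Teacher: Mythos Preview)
Your argument is correct and follows the same route as the paper's proof---namely, dropping the two nonnegative dissipation terms from the energy balance \eqref{eq:MQSpass} to obtain the dissipation inequality. In fact you are more careful than the paper: you explicitly verify continuity of $t\mapsto\mathcal{S}_{\rm MQS}(x(t))$ on $[0,T]$ via \eqref{eq:EinW11} and you note that the hypothesis $\mathcal{S}_{\rm MQS}(x_0)<\infty$ forces $\bm{A}_0\in X_0(\curl,\mathit{\Omega},\mathit{\Omega}_C)$, which is precisely what is needed to extend \eqref{eq:MQSpass} down to $t_0=0$, whereas the paper's proof states the inequality only for $0<t_0\le t_1\le T$.
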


\begin{proof}
	Since~the expression
	\[\int_{t_0}^{t_1}\|\tfrac{\rm d}{{\rm d}\tau} \sqrt{\sigma}\bm{A}(\tau)\|^2_{L^2(\mathit{\Omega};\mathbb{R}^3)}\,{\rm d}\tau+\int_{t_0}^{t_1} \langle\bm{i}(\tau), R\,\bm{i}(\tau)\rangle_{2}\,{\rm d}\tau\]
	is positive, Theorem~\ref{thm:energybalance} implies that  for all $0<t_0\leq t_1\leq T$,
	the solution $(\bm{A},\bm{i})$ of~\eqref{eq:MQS}
	fulfills the dissipation inequality
	\[\mathcal{S}_{\rm MQS}(\bm{A}(t_1),\bm{i}(t_1))-\mathcal{S}_{\rm MQS}(\bm{A}(t_0),\bm{i}(t_0))\leq \int_{t_0}^{t_1} \langle\bm{v}(\tau), \bm{i}(\tau)\rangle_{2}\,{\rm d}\tau.\]
	Thus,  $\mathcal{S}_{\rm MQS}$ is a~storage function for passivity of the coupled MQS system \eqref{eq:MQS} and this system is indeed passive.	
\end{proof}

\begin{remark}
	\!\!Theorem~\textup{\ref{thm:energybalance}} implies that for $\bm{v}\!\in\! L^2([0,T];\R^m)$ and \mbox{$\bm{A}_0\!\in\! X(\mathit{\Omega},\mathit{\Omega}_C)$}, we have $\mathcal{S}_{\rm MQS}(\bm{A}(t),\bm{i}(t))<\infty$ for all $t\in(0,T]$. In other words, the storage function takes finite values after an arbitrary short time, even if it is infinite at time zero.
\end{remark}

\section{The Coupled MQS System in Port-Hamiltonian Formulation}
\label{sec:pH}

Port-Hamiltonian systems have meanwhile become rather popular as a~modelling tool especially for coupled (multi-)physical systems \cite{JvdS14}. In the past few years, this theory has successfully been extended to (finite-dimensional) differential-algebraic systems \cite{MvdS18,MvdS19,vdS13,GeHaRe21}. The aim of this section is to show that the coupled MQS system~\eqref{eq:MQS} also fits into the framework of port-Hamiltonian systems.

We first introduce some basics of port-Hamiltonian systems which are heavily inspired by \cite{JvdS14}. Since infinite-dimensional port-Hamiltonian systems have been so far treated mainly from a~differential geometric rather than from a~functional analytic perspective \cite{MvdS04a,MvdS04b,MasvdS02}, the authors of this paper have a~certain sovereignty of definition. Note that, though a~functional analytic approach to infinite-dimensional port-Hamiltonian systems has been discussed in \cite{JacZwa12}, the theory presented therein, however, restricts to the very limited class of coupled systems of linear transport equations.

An important concept is the Dirac structure which describes the power preserving energy-routing of the system. Dirac structures on Hilbert spaces have been considered in \cite{BKvdSZ10}, and their structure has been analysed by using the theory of Krein spaces.

\begin{definition}[Dirac structure]\label{def-Dir}
	%[\!{\cite[Definition 1.1]{Cou90}\cite[3. Dirac structure]{Dor87}}]
	Let $X$ be a~Banach space. A~subspace \linebreak \mbox{$\mathcal D \subset X\times X'$} is called a~\emph{Dirac structure}, if for all $f\in X$, $e\in X'$, it holds
		\begin{equation*}
		(f,e)\in \mathcal D\;\Longleftrightarrow\; \left( \langle{f},{\hat{e}}\rangle+\langle{\hat{f}},{{e}}\rangle=0\,\text{ for all }\, (\hat{f},\hat{e})\in \mathcal D\right).
		\end{equation*}
		Hereby, $X$ is called the space of {\em flows}, whereas $X'$ is called the space of {\em efforts}.
\end{definition}

Another concept needed for port-Hamiltonian systems is the resistive relation which represents the internal energy dissipation of the system \cite[Section~2.4]{JvdS14}.

\begin{definition}[Resistive relation]\label{def-res}
	Let $X$ be a~Banach space. A relation \linebreak\mbox{$\mathcal R\subset X\times X'$} is called \emph{resistive}, if
		\[\scpr{f}{e}\leq0\, \text{ for all }\,(f,e)\in{\mathcal{R}}.\]
\end{definition}

Having defined the Dirac structure and the resistive relation, we are now ready to introduce port-Hamiltonian systems. The subsequent definition uses the concepts of {\em G\^{a}teaux differentiability} and {\em G\^{a}teaux derivative}, for which we refer to \cite[Definition~4.5]{Zeid86}.

\begin{definition}[Port-Hamiltonian system]\label{def-pH}
	Let $X_{\mathcal{S}}$, $X_{\mathcal{R}}$ and $X_{\mathcal{P}}$ be Banach spaces.
		A \emph{port-Hamiltonian system}  is a~triple $(\mathcal D,\mathcal{H},\mathcal R)$, where
		$$
		\mathcal D\subset (X_{\mathcal{S}}\times X_{\mathcal{R}}\times X_{\mathcal{P}})\times(X_{\mathcal{S}}'\times X_{\mathcal{R}}'\times X_{\mathcal{P}}')
		$$
		is a~Dirac structure, $\mathcal{H}:X_{\mathcal{S}}\to \R$ is Lipschitz continuous on bounded sets and G\^{a}teaux differentiable, and $\mathcal R\subset X_{\mathcal{R}}\times X_{\mathcal{R}}'$ is a~resistive relation.
		The \emph{dynamics} of the port-Hamiltonian system on an~interval $\mathbb{I}\subset\R$ are specified by the differential inclusions
		\begin{equation}\label{eq:pHdyn}
		\arraycolsep=2pt
		\begin{array}{rcl}
		\bigl(-\tfrac{\rm d }{{\rm d} t}x(t),f_{\mathcal{R}}(t),f_{\mathcal{P}}(t),{\rm D}\mathcal{H}(x(t)),e_{\mathcal{R}}(t),e_{\mathcal{P}}(t))& \in &\mathcal D, \\
		(f_{\mathcal{R}}(t),e_{\mathcal{R}}(t)\bigr)& \in& \mathcal R,  \quad t\in\mathbb{I},
		\end{array}
		\end{equation}
		where ${\rm D}\mathcal{H}:X_{\mathcal{S}}\to X_{\mathcal{S}}'$ is the G\^{a}teaux derivative of $\mathcal{H}$.
		The function $\mathcal{H}$ is called {\em Hamiltonian}, whereas, for some time $t$ in which the system is driven, $x(t)$ is called {\em state}. The spaces $X_{\mathcal{S}}\times X_{\mathcal{S}}'$, $X_{\mathcal{R}}\times X_{\mathcal{R}}'$ and $X_{\mathcal{P}}\times X_{\mathcal{P}}'$ are referred to as {\em energy storage port}, {\em resistive port} and {\em external port}, respectively.	
\end{definition}

\begin{remark}\
	\begin{enumerate}[\rm a)]
		\item Though the above definition of a~port-Hamiltonian system includes the case of infinite-dimensional Banach spaces and therefore has a~certain generality, there are quite a~lot of opportunities for even further generalizations.
		For instance, in a very general setting, a Dirac structure is defined
		on a manifold $\mathcal M$ by a certain subbundle of $\mathcal D\subset T\mathcal M\oplus T^*\mathcal M$, where $T\mathcal M$
		is the tangent bundle and $T^*\mathcal M$ is the co-tangent bundle of $\mathcal{M}$ \textup{\cite[Definition~2.2.1]{Cou90}}. In \textup{\cite{MvdS18,MvdS19}}, the energy storage port has been determined by so-called {\em Lagrange manifolds}, which generalize the above use of Hamiltonians.
		\item Since the Hamiltonian is assumed to be Lipschitz continuous on bounded sets and is G\^{a}teaux differentiable, \textup{\cite[Theorem~4.2]{ArKr18}} can be applied to obtain that for any interval $\mathbb{I}\subset\R$ and any $x\in W^{1,p}(\mathbb{I};X_{\mathcal{S}})$, it holds that $(t\mapsto \mathcal{H}(x(t))\in W^{1,p}(\mathbb{I})$, and the weak derivative fulfills the generalized chain rule
		\begin{equation}\tfrac{\rm d }{{\rm d} t}\mathcal{H}(x)=\langle {\tfrac{\rm d}{{\rm d} t}}x,
		{\rm D}\mathcal{H}(x)\rangle.\label{eq:chain}
		\end{equation}
		Combining this with the properties of the Dirac structure and resistive structure, we obtain from \eqref{eq:pHdyn} that for almost all $t\in\mathbb{I}$, it holds the energy balance
		\[\begin{aligned}
		0=\,&-\langle \tfrac{\rm d }{{\rm d} t} x(t),{\rm D}\mathcal{H}(x(t))\rangle+
		\langle f_{\mathcal{R}}(t),e_{\mathcal{R}}(t)\rangle+\langle f_{\mathcal{P}}(t),e_{\mathcal{P}}(t)\rangle\\
		=\,&-\tfrac{\rm d }{{\rm d} t}\mathcal{H}(x(t))+
		\langle f_{\mathcal{R}}(t),e_{\mathcal{R}}(t)\rangle+\langle f_{\mathcal{P}}(t),e_{\mathcal{P}}(t)\rangle\\
		\leq\,&-\tfrac{\rm d }{{\rm d} t}\mathcal{H}(x(t))+
		\langle f_{\mathcal{P}}(t),e_{\mathcal{P}}(t)\rangle.
		\end{aligned}\]
		and, hence, an~integration on $[t_0,t_1]\subset \mathbb{I}$ yields %the energy balance
		\begin{equation}
		\mathcal{H}(x(t_1))-\mathcal{H}(x(t_0))\leq\,
		\int_{t_0}^{t_1}\langle f_{\mathcal{P}}(t),e_{\mathcal{P}}(t)\rangle\,{\rm d}t.\label{eq:HamEnerg}
		\end{equation}
		If, for instance, the flows at the external ports form the input, and the efforts at the external ports form the output (or vice-versa), then we indeed obtain the dissipation inequality \eqref{eq:pass}.
		\item A~generalization of the G\^{a}teaux derivative is given by the {\em subdifferential} as considered in \textup{\cite{Bar10}} from the perspective of nonlinear evolution equations. This approach is applicable to a~class of Hamiltonians which are further allowed to map to $\R_{\ge0}\cup\{\infty\}$, and typically results into a~subdifferential which is set-valued and only defined on some subset of $X_{\mathcal{P}}$.
		Under the additional assumption that $X_{\mathcal{P}}$ is a~Hilbert space, it is shown in \textup{\cite[Lemma~4.4]{Bar10}} that the generalized chain rule \eqref{eq:chain} also holds, if the G\^{a}teaux derivative is replaced by a~subdifferential. Since this is the essential ingredient used in \eqref{eq:HamEnerg}, the incorporation of subdifferentials is a~further possible generalization of our approach to infinite-dimensional port-Hamiltonian systems.
	\end{enumerate}
\end{remark}

We now show that the coupled MQS system \eqref{eq:MQS} admits a~formulation as a~port-Hamiltonian system. For this purpose, we introduce the function
\begin{align*}
\mathcal{H}_{\rm MQS}: X_0(\curl,\mathit{\Omega},\mathit{\Omega}_C)&\to\,\R,\\
\bm{A}&\,\mapsto\int_{\mathit{\Omega}} \vartheta\bigl(\xi,\|\nabla\times\bm{A}(\xi)\|_2^2\bigr)\,{\rm d}\xi,
\end{align*}
with the magnetic energy density $\vartheta$ defined in \eqref{eq:gamma}.
Then it follows from \cite[Proposition~5.2\,a)]{ChReSt21} that $\mathcal{H}_{\rm MQS}$ is Lipschitz continuous on bounded sets, whereas  \cite[Proposition~5.2\,c)]{ChReSt21} shows that $\mathcal{H}_{\rm MQS}$ is G\^{a}teaux differentiable, and the G\^{a}teaux derivative fulfills for all $\bm{A},\bm{F}\in X_0(\curl,\mathit{\Omega},\mathit{\Omega}_C)$,
\begin{equation}\label{eq:Hamder}
\langle\bm{F},{\rm D}\mathcal{H}_{\rm MQS}(\bm{A})\rangle  = \int_{\mathit{\Omega}} \nu (\cdot , \| \nabla\times \bm{A} \|_2) \, (\nabla\times \bm{A}) \cdot (\nabla\times \bm{F})\,{\rm d}\xi.
\end{equation}
Next, consider the Hilbert spaces
\begin{equation}\label{eq:Xdef}
\begin{aligned}
X_{\mathcal{S}}=&\, X_0(\curl,\mathit{\Omega},\mathit{\Omega}_C),\qquad
X_{\mathcal{R}}=X_0(\curl,\mathit{\Omega},\mathit{\Omega}_C)'\times\R^m,\qquad
X_{\mathcal{P}}=\R^m, \\
X\,=&\, X_0(\curl,\mathit{\Omega},\mathit{\Omega}_C)\times X_0(\curl,\mathit{\Omega},\mathit{\Omega}_C)'\times\R^m \times\R^m.
\end{aligned}
\end{equation}
Since Hilbert spaces are reflexive, and the dual space of $\R^m$ can be canonically identified with itself, we have
\[\begin{aligned}
\!\!X_{\mathcal{S}}'=&\,X_0(\curl,\mathit{\Omega},\mathit{\Omega}_C)', \qquad
X_{\mathcal{R}}'=X_0(\curl,\mathit{\Omega},\mathit{\Omega}_C)\times\R^m,\qquad
X_{\mathcal{P}}'=\R^m,\\
X'\,=&\,X_0(\curl,\mathit{\Omega},\mathit{\Omega}_C)'\times X_0(\curl,\mathit{\Omega},\mathit{\Omega}_C)\times\R^m \times\R^m.
\end{aligned}\]
We introduce the relations
\begin{align}
\label{eq:Ddef}\mathcal D_{\rm MQS}=&\Biggl\{\left(\begin{pmatrix}\bm{f}_{\bm{A}}\\\bm{J}_{\rm int}\\\bm{v}_{\rm w}\\\bm{i}\end{pmatrix},\begin{pmatrix}{\bm{J}}\\\bm{E}\\\bm{i}_{\rm w}\\\bm{v}\end{pmatrix}\right)\in X\times X'\;:\; \bm{f}_{\bm{A}}=\bm{E},\;\;\bm{J}_{\rm int}=-\bm{J}+\chi\, \bm{i}_{\rm w},\Biggr.\\[-5mm]
&\hspace*{48mm}\Biggl. \bm{v}_{\rm w}=-\int_\mathit{\Omega} \chi^\top\bm{E} \, {\rm d}\xi-\bm{v},\enskip
\bm{i}=\bm{i}_{\rm w}\Biggr\},\nonumber\\[3mm]
\label{eq:Rdef}{\mathcal R}_{\rm MQS}=&\left\{\left(\begin{pmatrix}\bm{J}_{\rm int}\\\bm{v}_{\rm w}\end{pmatrix},\begin{pmatrix}\bm{E}\\\bm{i}_{\rm w}\end{pmatrix}\right)\in X_{\mathcal{R}}\times X_{\mathcal{R}}'\;:\; \bm{J}_{\rm int}=-\sigma\bm{E}, \enskip
\bm{v}_{\rm w}=-R\,\bm{i}_{\rm w}\right\}.
\end{align}
It immediately follows from Assumption~\ref{ass:material}\,\ref{ass:material1}) and~\ref{ass:resistance}) that ${\mathcal R}_{\rm MQS}$ is a~resistive relation. To prove that ${\mathcal D}_{\rm MQS}$ is a~Dirac structure, we advance the following lemma, which is a~straightforward generalization of the well-known statement that the graphs of a~skew-symmetric matrices define Dirac structures \cite{MvdS18,GeHaRe21}.

\begin{lemma}\label{lem:skewDirac}
	Let $X$ be a~Banach space, and let $\mathcal{J}\in\mathcal{L}(X',X)$ be a skew-dual operator in the sense that it fulfills {$\scpr{\mathcal{J}v}{w}=-\scpr{\mathcal{J}w}{v}$} for all $v,w\in X'$. Then $\mathcal{D}=\{(\mathcal{J}e,e)\enskip:\enskip e\in X'\}$ is a~Dirac structure.
\end{lemma}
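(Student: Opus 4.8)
The plan is to verify the defining biconditional of a Dirac structure (Definition~\ref{def-Dir}) directly for $\mathcal{D}=\{(\mathcal{J}e,e):e\in X'\}$. Fix $(f,e)\in X\times X'$. First I would prove the forward implication: suppose $(f,e)\in\mathcal{D}$, so $f=\mathcal{J}e$. Then for any $(\hat f,\hat e)=(\mathcal{J}\hat e,\hat e)\in\mathcal{D}$ we compute
\[
\scpr{f}{\hat e}+\scpr{\hat f}{e}=\scpr{\mathcal{J}e}{\hat e}+\scpr{\mathcal{J}\hat e}{e}=\scpr{\mathcal{J}e}{\hat e}-\scpr{\mathcal{J}e}{\hat e}=0,
\]
using the skew-duality of $\mathcal{J}$ with the roles $v=\hat e$, $w=e$. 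Hence the pairing vanishes for all elements of $\mathcal{D}$, as required.

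For the reverse implication, suppose $(f,e)\in X\times X'$ satisfies $\scpr{f}{\hat e}+\scpr{\hat f}{e}=0$ for all $(\hat f,\hat e)\in\mathcal{D}$, i.e. for all $\hat e\in X'$ with $\hat f=\mathcal{J}\hat e$. We must show $f=\mathcal{J}e$. Rewriting the hypothesis, $\scpr{f}{\hat e}+\scpr{\mathcal{J}\hat e}{e}=0$ for all $\hat e\in X'$; using skew-duality, $\scpr{\mathcal{J}\hat e}{e}=-\scpr{\mathcal{J}e}{\hat e}$, so $\scpr{f-\mathcal{J}e}{\hat e}=0$ for all $\hat e\in X'$. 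Since this holds for every effort $\hat e$, the element $f-\mathcal{J}e\in X$ is annihilated by all of $X'$, and by the Hahn--Banach theorem (more precisely, the fact that $X'$ separates points of $X$) we conclude $f-\mathcal{J}e=0$, that is, $f=\mathcal{J}e$, so $(f,e)\in\mathcal{D}$.

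The only mild subtlety — and the one place where Banach-space generality matters rather than being automatic — is the last step of the reverse implication: from $\scpr{f-\mathcal{J}e}{\hat e}=0$ for all $\hat e\in X'$ we need $f-\mathcal{J}e=0$ in $X$, which requires that the canonical pairing between $X$ and $X'$ is nondegenerate in the first argument. This is exactly the statement that the canonical embedding $X\hookrightarrow X''$ is injective, a standard consequence of Hahn--Banach valid for any normed space, so no reflexivity is needed. Everything else is a direct substitution using the linearity and boundedness of $\mathcal{J}$ together with its skew-duality; the boundedness $\mathcal{J}\in\mathcal{L}(X',X)$ is not actually used for the algebra but guarantees $\mathcal{D}$ is a genuine (closed) subspace, which is implicitly part of the notion of a Dirac structure. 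I would close by noting that this lemma specializes, for $X$ finite-dimensional with $\mathcal{J}$ represented by a skew-symmetric matrix, to the classical fact cited from \cite{MvdS18,GeHaRe21}.
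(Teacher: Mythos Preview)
Your proof is correct and follows essentially the same route as the paper: verify both directions of the biconditional in Definition~\ref{def-Dir} directly, using the skew-duality identity for the forward implication and reducing the reverse implication to $\scpr{f-\mathcal{J}e}{\hat e}=0$ for all $\hat e\in X'$, then invoking the Hahn--Banach separation property (the paper cites \cite[Corollary~6.17~(2)]{Alt16}). Your additional remarks on boundedness and closedness of $\mathcal{D}$ are harmless but not needed for the argument as stated.
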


\begin{proof}
	Assume that $(f,e)\in\mathcal{D}$. Then $f=\mathcal{J}e$. This implies that for all $(\hat{f},\hat{e})\in\mathcal{D}$,
	\[\langle{f},{\hat{e}}\rangle+\langle{\hat{f}},{{e}}\rangle=\langle{\mathcal{J}e},{\hat{e}}\rangle+\langle{\mathcal{J}\hat{e}},{{e}}\rangle=-\langle{\mathcal{J}e},{\hat{e}}\rangle+\langle{\mathcal{J}\hat{e}},{{e}}\rangle=0.\]
	On the other hand, if $(f,e)\in X\times X'$ fulfills $\langle{f},{\hat{e}}\rangle+\langle{\hat{f}},{{e}}\rangle=0$ for all $(\hat{f},\hat{e})\in\mathcal{D}$, then we obtain for all $\hat{e}\in X'$ that
	\[0=\langle{f},{\hat{e}}\rangle+\langle{\mathcal{J}\hat{e}},{{e}}\rangle=\langle{f},{\hat{e}}\rangle-\langle{\mathcal{J}e},{\hat{e}}\rangle=\langle{f-\mathcal{J}e},{\hat{e}}\rangle.\]
	Hence, \cite[Corollary 6.17~(2)]{Alt16} (which is a~direct consequence of the famous Hahn-Banach theorem) leads to $f=\mathcal{J}e$, which in turn gives $(f,e)\in\mathcal{D}$.
\end{proof}

We can now easily verify that $\mathcal{D}_{\rm MQS}$ as in \eqref{eq:Ddef} is a~Dirac structure.

\begin{proposition}
	Assume that $\mathit{\Omega}\subset\mathbb{R}^3$ with a~subdomain $\mathit{\Omega}_C$ satisfies Assumption~\textup{\ref{ass:omega}}, and let $\chi:\mathit{\Omega}\to\mathbb{R}^{3\times m}$ satisfy Assumption~\textup{\ref{ass:init}\,\ref{ass:winding})}. Then $\mathcal{D}_{\rm MQS}$ as in \eqref{eq:Ddef} is a~Dirac structure.
\end{proposition}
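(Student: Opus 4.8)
The plan is to put $\mathcal{D}_{\rm MQS}$ into the form covered by Lemma~\ref{lem:skewDirac}, i.e.\ to exhibit a skew-dual operator $\mathcal{J}\in\mathcal{L}(X',X)$ whose graph (read in the right order) is exactly $\mathcal{D}_{\rm MQS}$. Looking at the defining relations in \eqref{eq:Ddef}, the four ``effort'' variables are $(\bm{J},\bm{E},\bm{i}_{\rm w},\bm{v})\in X'=X_0(\curl,\mathit{\Omega},\mathit{\Omega}_C)'\times X_0(\curl,\mathit{\Omega},\mathit{\Omega}_C)\times\R^m\times\R^m$, and the four ``flow'' variables $(\bm{f}_{\bm{A}},\bm{J}_{\rm int},\bm{v}_{\rm w},\bm{i})\in X$ are expressed linearly and continuously in terms of them. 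So I would simply define
\[
\mathcal{J}\begin{pmatrix}\bm{J}\\\bm{E}\\\bm{i}_{\rm w}\\\bm{v}\end{pmatrix}
=\begin{pmatrix}\bm{E}\\-\bm{J}+\chi\,\bm{i}_{\rm w}\\-\int_\mathit{\Omega}\chi^\top\bm{E}\,{\rm d}\xi-\bm{v}\\\bm{i}_{\rm w}\end{pmatrix},
\]
which maps $X'\to X$ and is bounded: the only non-obvious pieces are $\bm{i}_{\rm w}\mapsto\chi\,\bm{i}_{\rm w}\in X_0(\curl,\mathit{\Omega},\mathit{\Omega}_C)'$ (bounded since $\chi\in L^2(\mathit{\Omega};\R^{3\times m})$, identified via \eqref{eq:L23m} as an operator $\R^m\to L^2(\mathit{\Omega};\R^3)\hookrightarrow X_0(\curl,\mathit{\Omega},\mathit{\Omega}_C)'$, using Assumption~\ref{ass:init}\,\ref{ass:winding})) and $\bm{E}\mapsto\int_\mathit{\Omega}\chi^\top\bm{E}\,{\rm d}\xi\in\R^m$ (bounded by Cauchy--Schwarz with constant $\|\chi\|_{L^2(\mathit{\Omega};\R^{3\times m})}$). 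By construction $\mathcal{D}_{\rm MQS}=\{(\mathcal{J}e,e):e\in X'\}$, so Lemma~\ref{lem:skewDirac} applies once skew-duality is checked.

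The remaining, and really the only substantive, step is to verify that $\mathcal{J}$ is skew-dual, i.e.\ $\scpr{\mathcal{J}e}{\hat e}=-\scpr{\mathcal{J}\hat e}{e}$ for all $e,\hat e\in X'$. Write $e=(\bm{J},\bm{E},\bm{i}_{\rm w},\bm{v})$ and $\hat e=(\hat{\bm{J}},\hat{\bm{E}},\hat{\bm{i}}_{\rm w},\hat{\bm{v}})$; here $\scpr{\cdot}{\cdot}$ is the duality pairing between $X$ and $X'$, which on the four blocks is the pairing of $X_0(\curl,\mathit{\Omega},\mathit{\Omega}_C)$ with its dual (once in each direction) and the Euclidean inner product on the two $\R^m$ factors. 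Expanding,
\[
\scpr{\mathcal{J}e}{\hat e}
=\scpr{\bm{E}}{\hat{\bm{J}}}+\scpr{-\bm{J}+\chi\,\bm{i}_{\rm w}}{\hat{\bm{E}}}
+\Bigl\langle{-\textstyle\int_\mathit{\Omega}\chi^\top\bm{E}\,{\rm d}\xi-\bm{v}},{\hat{\bm{i}}_{\rm w}}\Bigr\rangle_2
+\scpr{\bm{i}_{\rm w}}{\hat{\bm{v}}}.
\]
The cross terms $\scpr{\bm{E}}{\hat{\bm{J}}}$ and $-\scpr{\bm{J}}{\hat{\bm{E}}}$ pair with their swapped counterparts in $\scpr{\mathcal{J}\hat e}{e}$ to cancel, and likewise $\scpr{\bm{i}_{\rm w}}{\hat{\bm{v}}}$ with $-\scpr{\bm{v}}{\hat{\bm{i}}_{\rm w}}_2$. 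The point that needs the identification \eqref{eq:L23m} is that $\scpr{\chi\,\bm{i}_{\rm w}}{\hat{\bm{E}}}=\langle\bm{i}_{\rm w},\int_\mathit{\Omega}\chi^\top\hat{\bm{E}}\,{\rm d}\xi\rangle_2$: indeed, under \eqref{eq:L23m}, $\chi\,\bm{i}_{\rm w}$ is the $L^2$-function $\sum_k(\bm{i}_{\rm w})_k\,\chi_k$, and its $X_0(\curl,\mathit{\Omega},\mathit{\Omega}_C)'$--$X_0(\curl,\mathit{\Omega},\mathit{\Omega}_C)$ pairing with $\hat{\bm{E}}$ is the $L^2$ inner product $\int_\mathit{\Omega}(\chi\,\bm{i}_{\rm w})\cdot\hat{\bm{E}}\,{\rm d}\xi=\bm{i}_{\rm w}^\top\int_\mathit{\Omega}\chi^\top\hat{\bm{E}}\,{\rm d}\xi$; this is exactly the adjoint relation between the operator $\bm{i}_{\rm w}\mapsto\chi\,\bm{i}_{\rm w}$ and $\hat{\bm{E}}\mapsto\int_\mathit{\Omega}\chi^\top\hat{\bm{E}}\,{\rm d}\xi$. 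So the $\chi$-term $\scpr{\chi\,\bm{i}_{\rm w}}{\hat{\bm{E}}}$ cancels against $-\langle\int_\mathit{\Omega}\chi^\top\hat{\bm{E}}\,{\rm d}\xi,\bm{i}_{\rm w}\rangle_2$ coming from $\scpr{\mathcal{J}\hat e}{e}$, and symmetrically. Everything sums to zero, giving skew-duality.

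I do not expect any real obstacle here — the statement is, as the authors say, a routine adaptation of the finite-dimensional fact that skew-symmetric matrices generate Dirac structures. The one place to be careful is the bookkeeping of which factor of $X_0(\curl,\mathit{\Omega},\mathit{\Omega}_C)$ versus its dual carries which variable, and making sure the pairing $\scpr{\cdot}{\cdot}$ on $X\times X'$ is the sum of the four block pairings (so that the duality pairing of $X_{\mathcal{S}}$ with $X_{\mathcal{S}}'$ appears once as $\scpr{f}{e}$ and once, in the other block, as $\scpr{e}{f}$ with the roles reversed, i.e.\ via reflexivity $\scpr{\bm{E}}{\hat{\bm{J}}}_{X_0\times X_0'}$ and $\scpr{\hat{\bm{E}}}{\bm{J}}_{X_0\times X_0'}$ respectively). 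Once these identifications are fixed, the computation above closes and Lemma~\ref{lem:skewDirac} finishes the proof. The membership condition $\mathcal{J}e\in X$ is automatic from the boundedness discussion, and no density or Hahn--Banach argument beyond what is already inside Lemma~\ref{lem:skewDirac} is needed.
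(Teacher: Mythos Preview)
Your proposal is correct and follows exactly the paper's approach: define the operator $\mathcal{J}_{\rm MQS}$ as you do, observe that it is bounded and skew-dual, and invoke Lemma~\ref{lem:skewDirac}. The paper merely asserts skew-duality (``it can be immediately seen''), whereas you spell out the block-by-block cancellation and the adjoint relation for the $\chi$-terms, but the argument is the same.
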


\begin{proof}
	For $X$ as in \eqref{eq:Xdef}, consider the bounded operator $\mathcal{J}_{\rm MQS}:X'\to X$ with
	\[\mathcal{J}_{\rm MQS}\begin{pmatrix}\bm{J}\\\bm{E}\\\bm{i}_{\rm w}\\\bm{v}\end{pmatrix}= \begin{pmatrix}\bm{E}\\-\bm{J}+\chi\, \bm{i}_{\rm w}\\-\int_\mathit{\Omega} \chi^\top\bm{E} \, {\rm d}\xi-\bm{v}\\\bm{i}_{\rm w}\end{pmatrix}.
	\]
	Then it can be immediately seen that $\mathcal{J}_{\rm MQS}$ is skew-dual, whence Lemma~\ref{lem:skewDirac} implies that $\mathcal{D}_{\rm MQS}$ is a~Dirac structure.
\end{proof}

Next, we show that the coupled MQS system \eqref{eq:MQS} represents the dynamics of the port-Hamiltonian system $(\mathcal{D}_{\rm MQS},\mathcal{H}_{\rm MQS},\mathcal{R}_{\rm MQS})$ in some sense. Denote
the state~$x(t)$ by $\bm{A}(t)$ and the flow and effort at the external port by {$\bm{i}(t)$} and {$\bm{v}(t)$}, respectively. Then
\begin{equation}
\begin{aligned}
\left(-\tfrac{\rm d }{{\rm d} t} \bm{A}(t),\begin{pmatrix}\bm{J}_{\rm int}(t)\\\bm{v}_{\rm w}(t)\end{pmatrix},{\bm{i}}(t),{\rm D}\mathcal{H}_{\rm MQS}(\bm{A}(t)),\begin{pmatrix}\bm{E}(t)\\\bm{i}_{\rm w}(t)\end{pmatrix},{\bm{v}}(t)\right)&\in{\mathcal D}_{\rm MQS},\\
\left(\begin{pmatrix}\bm{J}_{\rm int}(t)\\\bm{v}_{\rm w}(t)\end{pmatrix},\begin{pmatrix}\bm{E}(t)\\\bm{i}_{\rm w}(t)\end{pmatrix}\right)&\in{\mathcal R}_{\rm MQS}.
\end{aligned}\label{eq:MQSph}\end{equation}
By using the definition of $\mathcal{D}_{\rm MQS}$ in \eqref{eq:Ddef} and $\mathcal{R}_{\rm MQS}$ in \eqref{eq:Rdef}, we obtain
\begin{align*}
-\tfrac{\rm d }{{\rm d} t} \bm{A}(t)=&\, \bm{E}(t),&
-\sigma \bm{E}(t)=&\,-{\rm D}\mathcal{H}_{\rm MQS}(\bm{A}(t))+\chi\, \bm{i}_{\rm w}(t),\\
-R\,\bm{i}_{\rm w}(t)=&\,-\int_\mathit{\Omega} \chi^\top\bm{E}(t) \, {\rm d}\xi-\bm{v}(t),&
\bm{i}(t)=&\,\bm{i}_{\rm w}(t),
\end{align*}
and, thus,
\[\begin{aligned}
\tfrac{\rm d }{{\rm d} t} \sigma \bm{A}(t)=&\,-{\rm D}\mathcal{H}_{\rm MQS}(\bm{A}(t))+\chi\, \bm{i}(t),\\
\tfrac{\rm d }{{\rm d} t}\int_\mathit{\Omega} \chi^\top\bm{A}(t) \, {\rm d}\xi=&\,-R\,\bm{i}(t)+\bm{v}(t).
\end{aligned}\]
Since the first equation takes place in $X_0(\curl,\mathit{\Omega},\mathit{\Omega}_C)'$ and ${\rm D}\mathcal{H}_{\rm MQS}$ reads as in \eqref{eq:Hamder}, we obtain the weak formulation \eqref{eq:weak} of the coupled MQS system~\eqref{eq:MQS}.
Moreover, the external port is formed by the voltage $\bm{v}$ and the current $\bm{i}$ at the conductive contacts, which are, respectively, the input and the output of \eqref{eq:MQS}.
The variable $\bm{J}_{\rm int}$ in \eqref{eq:MQSph} stands for the current density in $\mathit{\Omega}$ induced by the electromagnetic field, whereas~$\bm{E}$ is the electric field intensity. Further, $\bm{v}_{\rm w}$ is the part of the voltage at the winding which is caused by the resistive effect, and $\bm{i}_{\rm w}$ is the corresponding current.

\begin{remark}
	Note that the port-Hamiltonian model \eqref{eq:MQSph} requires weak diffe\-ren\-tiability with respect to time of the magnetic vector potential $\bm{A}$, whereas the solution concept in Section~\textup{\ref{sec:solution}} only requires $\sigma\bm{A}$ to be weakly differentiable.
\end{remark}

The port-Hamiltonian model \eqref{eq:MQSph} can at least formally be represented in a~compact form
\[\begin{aligned}\tfrac{\rm d }{{\rm d} t}\begin{bmatrix}I\;&0\;&0\;\\0&0&0\\0&0&0\end{bmatrix}\!\begin{bmatrix}\bm{A}(t)\\\bm{E}(t)\\\bm{i}(t)\end{bmatrix}=&
\begin{bmatrix}
0\enskip\; &-I&\;0\\
I\enskip\; &-\sigma&-\chi\\
0\enskip\; &\int_\mathit{\Omega} \chi^\top\cdot\, {\rm d}\xi\enskip\; &-R\;\end{bmatrix}\!\begin{bmatrix}{\rm D}\mathcal{H}_{\rm MQS}(\bm{A}(t))\\\bm{E}(t)\\\bm{i}(t)\end{bmatrix}+\begin{bmatrix}0\\0\\I\end{bmatrix}\bm{v}(t),\\
\bm{i}(t)=&\begin{bmatrix}0\,&\,0\,&\,I\end{bmatrix}\begin{bmatrix}{\rm D}\mathcal{H}_{\rm MQS}(\bm{A}(t))\\\bm{E}(t)\\\bm{i}(t)\end{bmatrix},
\end{aligned}\]
whose structure amazingly resembles the class of finite-dimensional port-Hamiltonian differential-algebraic systems studied in \cite{MehM19ppt,BeMeXuZw18}.

\section{Solution Estimates}
\label{sec:stability}

In this section, we study the quantitative properties of the coupled MQS system~\eqref{eq:MQS} by considering its input-to-output and input-to-state behavior. We
present %quantitative
estimates for the current, the magnetic vector potential and the magnetic energy upon the initial magnetic vector potential and the voltage. We further show that, under some additional assumption on the initial value, the free dynamics of the MQS system decay exponentially.
%in some sense.
Note that our estimates also cover the case of infinite intervals. To this end, we refer to Remark~\ref{rem:solinf} for the definition and existence of solutions on the whole positive real axis $\R_{\ge0}$.

\subsection{Input-to-output behavior}
\label{sec:io}

First, we derive estimates for the current~$\bm{i}$, which is considered as the output of the coupled MQS system \eqref{eq:MQS},
by means of the initial value $\bm{A}_0$ and the voltage $\bm{v}$, which forms the input. In most cases, we will stick to the case
$\bm{A}_0\in X_0(\curl,\mathit{\Omega},\mathit{\Omega}_C)$. Note that by \cite[Theorem~7.1]{ChReSt21} (see also Section~\ref{sec:solution}), the solution $(\bm{A},\bm{i})$ has the property that $\bm{A}(t)\in X_0(\curl,\mathit{\Omega},\mathit{\Omega}_C)$
for almost all $t>0$
%on the positive time axis
even if the weak curl of
$\bm{A}_0\in X(\mathit{\Omega},\mathit{\Omega}_C)$ is not in $L^2(\mathit{\Omega};\R^3)$.

Our forthcoming considerations
rely on the following auxiliary result, which is solely based on the energy balance \eqref{eq:MQSpass} developed in Theorem~\ref{thm:energybalance}.

\begin{lemma}\label{lem:est}
	Assume that $\mathit{\Omega}\subset\mathbb{R}^3$ with subdomain $\mathit{\Omega}_C$  satisfies Assumption~\textup{\ref{ass:omega}}. Further, let
	Assumptions~\textup{\ref{ass:material}} and~\textup{\ref{ass:init}} be fulfilled, and let
	$T\in\R_{\ge0}\cup\{\infty\}$ and \linebreak \mbox{$\bm{v}\in L^2_{\rm loc}([0,T);\mathbb{R}^m)$}. Moreover, let
	$E$ be the magnetic energy as defined in \eqref{eq:varphiA}, and let
	$(\bm{A},\bm{i})$ be the~solution of the coupled MQS system \eqref{eq:MQS}. Then
	for all $\varepsilon>0$ and all {$0<t_0\leq t_1<T$}, it holds
	\begin{align*}	
	E(\bm{A}(t_1))&-E(\bm{A}(t_0))\\
	&\leq -\int_{t_0}^{t_1}\|\tfrac{\rm d}{{\rm d}\tau} \sqrt{\sigma}\bm{A}(\tau)\|^2_{L^2(\mathit{\Omega};\mathbb{R}^3)}{\rm d}\tau-\Bigl(1-\frac{\varepsilon}{2}\Bigr)\int_{t_0}^{t_1}\|R^{1/2}\bm{i}(\tau)\|_2^2\,{\rm d}\tau\\
	&\qquad+\frac1{2\varepsilon} \int_{t_0}^{t_1} \|R^{-1/2}\bm{v}(\tau)\|_2^2
	\,{\rm d}\tau.
	\end{align*}
	If, additionally, $\bm{A}_{0}\in X_0(\curl,\mathit{\Omega},\mathit{\Omega}_C)$, then the above inequality holds even for all \mbox{$0\leq t_0\leq t_1< T$}.
\end{lemma}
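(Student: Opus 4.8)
The plan is to start from the exact energy balance \eqref{eq:MQSpass} of Theorem~\ref{thm:energybalance} and simply estimate the cross term $\int_{t_0}^{t_1}\langle\bm{v}(\tau),\bm{i}(\tau)\rangle_2\,{\rm d}\tau$ from above by means of Young's inequality. The key observation is that since $R$ is symmetric positive definite, $R^{1/2}$ and $R^{-1/2}$ are well-defined, and we may write $\langle\bm{v},\bm{i}\rangle_2 = \langle R^{-1/2}\bm{v},R^{1/2}\bm{i}\rangle_2$. Thus Cauchy--Schwarz and the weighted Young inequality $ab\le\frac{\varepsilon}{2}a^2+\frac{1}{2\varepsilon}b^2$ with $a=\|R^{1/2}\bm{i}(\tau)\|_2$ and $b=\|R^{-1/2}\bm{v}(\tau)\|_2$ give, pointwise in $\tau$,
\[
\langle\bm{v}(\tau),\bm{i}(\tau)\rangle_2\le \frac{\varepsilon}{2}\|R^{1/2}\bm{i}(\tau)\|_2^2+\frac{1}{2\varepsilon}\|R^{-1/2}\bm{v}(\tau)\|_2^2.
\]

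Next I would integrate this over $[t_0,t_1]$ and substitute into \eqref{eq:MQSpass}. Using that $\langle\bm{i}(\tau),R\,\bm{i}(\tau)\rangle_2=\|R^{1/2}\bm{i}(\tau)\|_2^2$, the term $-\int_{t_0}^{t_1}\langle\bm{i}(\tau),R\,\bm{i}(\tau)\rangle_2\,{\rm d}\tau + \frac{\varepsilon}{2}\int_{t_0}^{t_1}\|R^{1/2}\bm{i}(\tau)\|_2^2\,{\rm d}\tau$ combines to $-\bigl(1-\tfrac{\varepsilon}{2}\bigr)\int_{t_0}^{t_1}\|R^{1/2}\bm{i}(\tau)\|_2^2\,{\rm d}\tau$, while the $\|\tfrac{\rm d}{{\rm d}\tau}\sqrt{\sigma}\bm{A}(\tau)\|^2$ term is carried along unchanged and the $\frac{1}{2\varepsilon}\|R^{-1/2}\bm{v}(\tau)\|_2^2$ term appears with a plus sign. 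This is exactly the claimed inequality. One small point to address: Theorem~\ref{thm:energybalance} is stated for $\bm{v}\in L^2([0,T];\R^m)$ on a finite interval, whereas here $T\in\R_{\ge0}\cup\{\infty\}$ and $\bm{v}\in L^2_{\rm loc}([0,T);\R^m)$; this is handled by applying Theorem~\ref{thm:energybalance} on the finite subinterval $[0,t_1]$ (on which $\bm{v}\in L^2$ since $t_1<T$), invoking Remark~\ref{rem:solinf} for the solution concept, and noting all the displayed quantities are finite there.

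Finally, for the addendum: if $\bm{A}_0\in X_0(\curl,\mathit{\Omega},\mathit{\Omega}_C)$, Theorem~\ref{thm:energybalance} guarantees that \eqref{eq:MQSpass} holds for all $0\le t_0\le t_1\le T$ (in particular including $t_0=0$), so the same Young-inequality argument goes through verbatim to yield the inequality for $0\le t_0\le t_1<T$. I do not anticipate a genuine obstacle here; the only thing requiring a word of care is the reduction from the possibly-infinite interval back to the finite-interval statement of Theorem~\ref{thm:energybalance}, and the verification that in the case $\bm{A}_0\in X(\mathit{\Omega},\mathit{\Omega}_C)\setminus X_0(\curl,\mathit{\Omega},\mathit{\Omega}_C)$ one genuinely needs $t_0>0$ because $E(\bm{A}(0))$ may be infinite — which is precisely why the first part of the statement is restricted to $t_0>0$.
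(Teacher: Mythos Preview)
Your proposal is correct and follows essentially the same approach as the paper: start from the energy balance \eqref{eq:MQSpass}, apply Cauchy--Schwarz together with Young's inequality to the cross term $\langle\bm{v}(\tau),\bm{i}(\tau)\rangle_2$, and combine with the $-\|R^{1/2}\bm{i}(\tau)\|_2^2$ term. Your additional remarks on reducing to a finite subinterval (to pass from $L^2_{\rm loc}$ to $L^2$) and on why $t_0>0$ is needed when $\bm{A}_0\notin X_0(\curl,\mathit{\Omega},\mathit{\Omega}_C)$ are accurate and slightly more explicit than the paper's own proof.
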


\begin{proof}
	Assume that $0< t_0\leq t_1< T$, and let $(\bm{A},\bm{i})$ be a~solution of \eqref{eq:MQS}.
	A~combination of the Cauchy-Schwarz with Young's inequality \cite[p.~53]{Alt16} yields that for almost all $\tau\in[t_0,t_1]$,
	\begin{align*}
	\langle\bm{i}(\tau), \bm{v}(\tau)\rangle_{2} & \leq \|R^{1/2}\bm{i}(\tau)\|_2\, \|R^{-1/2}\bm{v}(\tau)\|_2 \\
	& \leq\frac{\varepsilon}2\, \|R^{1/2}\bm{i}(\tau)\|_2^2+\frac1{2\varepsilon}\, \|R^{-1/2}\bm{v}(\tau)\|_2^2.
	\end{align*}
	Then we obtain from the energy balance \eqref{eq:MQSpass} that
	\[\begin{aligned}	
	&E(\bm{A}(t_1))-E(\bm{A}(t_0))\\
	&\quad \leq-\int_{t_0}^{t_1}\|\tfrac{\rm d}{{\rm d}\tau} \sqrt{\sigma}\bm{A}(\tau)\|^2_{L^2(\mathit{\Omega};\mathbb{R}^3)}{\rm d}\tau-\int_{t_0}^{t_1}\|R^{1/2}\bm{i}(\tau)\|_2^2\,{\rm d}\tau\\
	&\qquad+\int_{t_0}^{t_1} \frac{\varepsilon}{2} \|R^{1/2}\bm{i}(\tau)\|_2^2+\frac1{2\varepsilon} \|R^{-1/2}\bm{v}(\tau)\|_2^2\,{\rm d}\tau\\
	&\quad =-\int_{t_0}^{t_1}\|\tfrac{\rm d}{{\rm d}\tau} \sqrt{\sigma}\bm{A}(\tau)\|^2_{L^2(\mathit{\Omega};\mathbb{R}^3)}{\rm d}\tau-\Bigl(1-\frac{\varepsilon}{2}\Bigr)\int_{t_0}^{t_1}\|R^{1/2}\bm{i}(\tau)\|_2^2\,{\rm d}\tau\\&\qquad+\frac1{2\varepsilon} \int_{t_0}^{t_1} \|R^{-1/2}\bm{v}(\tau)\|_2^2
	\,{\rm d}\tau.
	\end{aligned}\]
	As Theorem~\ref{thm:energybalance} states that \eqref{eq:MQSpass} holds for all $0\leq t_0\leq t_1< T$, if, additionally,  $\bm{A}_{0}\in X_0(\curl,\mathit{\Omega},\mathit{\Omega}_C)$, the above estimate is then as well fulfilled in the remaining case $t_0=0$.
\end{proof}

In the following theorem, we establish an~estimate for the $L^2$-norm of the output $y=\bm{i}$ in terms of the $L^2$-norm of the input $u=\bm{v}$. In particular, we show that \mbox{$\bm{i}\in L^2(\R_{\ge0};\R^m)$}, if $\bm{v}\in L^2(\R_{\ge0};\R^m)$ and $\bm{A}_{0}\in X_0(\curl,\mathit{\Omega},\mathit{\Omega}_C)$.

\begin{theorem}\label{th:outputest}
	Assume that $\mathit{\Omega}\subset\mathbb{R}^3$ with a~subdomain $\mathit{\Omega}_C$ satisfies Assumption~\textup{\ref{ass:omega}}. Further, let
	Assumptions~\textup{\ref{ass:material}} and~\textup{\ref{ass:init}} be fulfilled, and let $T\in\R_{\ge0}\cup\{\infty\}$, \mbox{$\bm{v}\in L^2([0,T);\mathbb{R}^m)$}, and
	$\bm{A}_{0}\in X_0(\curl,\mathit{\Omega},\mathit{\Omega}_C)$. Moreover,
	let $E$ be the magnetic ener\-gy as defined in \eqref{eq:varphiA}, and let $(\bm{A},\bm{i})$
	be a~solution of the coupled MQS system~\eqref{eq:MQS}. Then $\bm{i}\in L^2([0,T);\R^m)$, and for all $\varepsilon\in(0,{2})$,
	\begin{align}
	\!\!\|R^{1/2}\bm{i}\|_{L^2([0,T);\R^m)}^2\!&\leq{\tfrac{2}{{2-\varepsilon}}}\,E(\bm{A}_0)+
	{\tfrac1{{\varepsilon (2-\varepsilon)}}}\,\|R^{-1/2}\bm{v}\|_{L^2([0,T);\R^m)}^2,\label{eq:MQSoutputest1}\\
	\!\!\|R^{1/2}\bm{i}\|_{L^2([0,T);\R^m)}\!&\leq\sqrt{{\tfrac{{L_\nu}}{{2-\varepsilon}}}}\|\nabla\!\times\! \bm{A}_0\|_{L^2(\mathit{\Omega};\mathbb{R}^3)}
	\!+\!\tfrac1{\sqrt{{\varepsilon (2-\varepsilon)}}}\,\|R^{-1/2}\bm{v}\|_{L^2([0,T);\R^m)},
	\!
	\label{eq:MQSoutputest2}
	\end{align}
	where $L_\nu$ is the Lipschitz constant as in Assumption~\textup{\ref{ass:material}~\ref{ass:material2})}. If, moreover, the initial value is zero, i.e., $\bm{A}_{0}=0$, then
	\begin{equation}
	\|R^{1/2}\bm{i}\|_{L^2([0,T);\R^m)}\leq \|R^{-1/2}\bm{v}\|_{L^2([0,T);\R^m)}.
	\label{eq:MQSoutputest3}
	\end{equation}
\end{theorem}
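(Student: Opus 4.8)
The plan is to derive everything from Lemma~\ref{lem:est}, which already packages the energy balance with the Cauchy--Schwarz/Young splitting. Fix $\varepsilon\in(0,2)$ and apply the lemma with $t_0=0$ (legitimate since $\bm{A}_0\in X_0(\curl,\mathit{\Omega},\mathit{\Omega}_C)$) and an arbitrary $t_1\in[0,T)$. Dropping the nonnegative term $\int_0^{t_1}\|\tfrac{\rm d}{{\rm d}\tau}\sqrt\sigma\bm{A}(\tau)\|^2_{L^2(\mathit{\Omega};\R^3)}\,{\rm d}\tau$ and using $E(\bm{A}(t_1))\ge0$ (nonnegativity of the magnetic energy, which also follows from the left estimate in \eqref{eq:equivalence}), one gets
\[
\Bigl(1-\tfrac{\varepsilon}{2}\Bigr)\int_0^{t_1}\|R^{1/2}\bm{i}(\tau)\|_2^2\,{\rm d}\tau
\;\leq\; E(\bm{A}_0)+\tfrac{1}{2\varepsilon}\int_0^{t_1}\|R^{-1/2}\bm{v}(\tau)\|_2^2\,{\rm d}\tau .
\]
Dividing by $1-\varepsilon/2=\tfrac{2-\varepsilon}{2}>0$ yields the bound
\[
\int_0^{t_1}\|R^{1/2}\bm{i}(\tau)\|_2^2\,{\rm d}\tau
\;\leq\; \tfrac{2}{2-\varepsilon}\,E(\bm{A}_0)+\tfrac{1}{\varepsilon(2-\varepsilon)}\int_0^{t_1}\|R^{-1/2}\bm{v}(\tau)\|_2^2\,{\rm d}\tau
\;\leq\; \tfrac{2}{2-\varepsilon}\,E(\bm{A}_0)+\tfrac{1}{\varepsilon(2-\varepsilon)}\,\|R^{-1/2}\bm{v}\|^2_{L^2([0,T);\R^m)}.
\]
The right-hand side is independent of $t_1$, so $t_1\mapsto\int_0^{t_1}\|R^{1/2}\bm{i}(\tau)\|_2^2\,{\rm d}\tau$ is bounded on $[0,T)$; since the integrand is nonnegative and measurable, monotone convergence gives $\bm{i}\in L^2([0,T);\R^m)$ together with \eqref{eq:MQSoutputest1}.

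For \eqref{eq:MQSoutputest2}, I would feed the upper bound $E(\bm{A}_0)\le\tfrac{L_\nu}{2}\|\nabla\times\bm{A}_0\|^2_{L^2(\mathit{\Omega};\R^3)}$ from \eqref{eq:equivalence} into \eqref{eq:MQSoutputest1}, obtaining
\[
\|R^{1/2}\bm{i}\|^2_{L^2([0,T);\R^m)}\;\leq\;\tfrac{L_\nu}{2-\varepsilon}\,\|\nabla\times\bm{A}_0\|^2_{L^2(\mathit{\Omega};\R^3)}+\tfrac{1}{\varepsilon(2-\varepsilon)}\,\|R^{-1/2}\bm{v}\|^2_{L^2([0,T);\R^m)},
\]
and then take square roots, using the elementary inequality $\sqrt{a^2+b^2}\le a+b$ for $a,b\ge0$ (equivalently subadditivity of $\sqrt{\,\cdot\,}$). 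Finally, \eqref{eq:MQSoutputest3} is the case $\bm{A}_0=0$: then $E(\bm{A}_0)=E(0)=0$ because $\nabla\times 0=0$, so \eqref{eq:MQSoutputest1} reduces to $\|R^{1/2}\bm{i}\|^2_{L^2}\le\tfrac{1}{\varepsilon(2-\varepsilon)}\|R^{-1/2}\bm{v}\|^2_{L^2}$ for every $\varepsilon\in(0,2)$; the prefactor $\tfrac{1}{\varepsilon(2-\varepsilon)}$ is minimized at $\varepsilon=1$, where it equals $1$, giving $\|R^{1/2}\bm{i}\|_{L^2}\le\|R^{-1/2}\bm{v}\|_{L^2}$ after taking square roots.

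There is no real obstacle here; the content is entirely in Lemma~\ref{lem:est}, and the remaining work is bookkeeping. The only point deserving a line of care is the passage from the family of finite-horizon bounds (valid for every $t_1<T$) to the statement $\bm{i}\in L^2([0,T);\R^m)$ and the inequality on the full interval — this is where one invokes monotone convergence (or simply takes the supremum over $t_1$), and it also covers the case $T=\infty$ uniformly. One should also note explicitly that $\bm{i}\in L^2_{\rm loc}((0,T);\R^m)$ is already guaranteed by the solution concept, so the integrals on $[0,t_1]$ are well defined for $t_1<T$, and the extra regularity $\bm{A}_0\in X_0(\curl,\mathit{\Omega},\mathit{\Omega}_C)$ is exactly what upgrades the energy balance to hold down to $t_0=0$, so that $E(\bm{A}_0)$ (rather than $E(\bm{A}(t_0))$ for small $t_0$) appears on the right-hand side.
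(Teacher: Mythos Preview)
Your proof is correct and follows essentially the same route as the paper: apply Lemma~\ref{lem:est} with $t_0=0$ (using $\bm{A}_0\in X_0(\curl,\mathit{\Omega},\mathit{\Omega}_C)$), discard the two nonnegative terms $E(\bm{A}(t_1))$ and $\int_0^{t_1}\|\tfrac{\rm d}{{\rm d}\tau}\sqrt{\sigma}\bm{A}\|^2$, rearrange and pass to the limit $t_1\nearrow T$, then combine with \eqref{eq:equivalence} and specialize to $\varepsilon=1$ for the final inequality. The only cosmetic difference is that the paper quotes \cite[Theorem~7.1]{ChReSt21} for $\bm{i}\in L^2([0,T);\R^m)$ up front, whereas you recover this directly from the uniform bound via monotone convergence; your argument is slightly more self-contained in this respect.
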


\begin{proof}
	By \cite[Theorem~7.1]{ChReSt21}, we have $\bm{i}\in L^2([0,T);\R^m)$.
	Further, let $t\in[0,T)$ and $\varepsilon\in(0,2)$. Then using Lemma~\ref{lem:est}, we obtain
	\[\begin{aligned}
	0\leq\,E(\bm{A}(t))
	\leq&\,E(\bm{A}_0)
	-\int_{0}^{t}\|\tfrac{\rm d}{{\rm d}\tau} \sqrt{\sigma}\bm{A}(\tau)\|^2_{L^2(\mathit{\Omega};\mathbb{R}^3)}{\rm d}\tau\\&\qquad-\Bigl(1-\frac{\varepsilon}{2}\Bigr)\int_{{0}}^{{t}}\|R^{1/2}\bm{i}(\tau)\|_2^2\,{\rm d}\tau+\frac1{2\varepsilon} \int_{{0}}^{{t}} \|R^{-1/2}\bm{v}(\tau)\|_2^2
	\,{\rm d}\tau\\
	\leq&\, E(\bm{A}_0)
	-\Bigl(1-\frac{\varepsilon}{2}\Bigr)\int_{{0}}^{{t}}\|R^{1/2}\bm{i}(\tau)\|_2^2\,{\rm d}\tau+\frac1{2\varepsilon} \int_{{0}}^{{t}} \|R^{-1/2}\bm{v}(\tau)\|_2^2
	\,{\rm d}\tau.
	\end{aligned}\]
	Adding the expression $\bigl(1-\frac{\varepsilon}{2}\bigr)\int_{0}^{t}\|R^{1/2}\bm{i}(\tau)\|_2^2\,{\rm d}\tau$ to both sides, dividing by $1-\tfrac{\varepsilon}{2}$, and subsequently taking the limit for $t\nearrow T$, we obtain \eqref{eq:MQSoutputest1}. The estimate \eqref{eq:MQSoutputest2} immediately follows by a~combination of
	\eqref{eq:MQSoutputest1} with \eqref{eq:equivalence}.
	Finally, if $\bm{A}_0=0$, then \eqref{eq:MQSoutputest3} follows from \eqref{eq:MQSoutputest2} by setting $\varepsilon={1}$.
\end{proof}

\newpage
\begin{remark}\
	\begin{enumerate}[\rm a)]
		\item Let $\lambda_{\min}(R)$ be the minimal {eigenvalue} of $R$. Then, under the assumptions of Theorem~\textup{\ref{th:outputest}}, we obtain from  \eqref{eq:MQSoutputest1} and \eqref{eq:MQSoutputest2}
		that
		\begin{align*}
		\|\bm{i}\|_{L^2([0,T);\R^m)}^2&\leq{\tfrac{2}{(2-\varepsilon){\lambda_{\min}(R)}}}\,E(\bm{A}_0)+
		{\tfrac1{{\varepsilon (2-\varepsilon)}{\lambda_{\min}^2(R)}}}\,\|\bm{v}\|_{L^2([0,T);\R^m)}^2,\\
		\|\bm{i}\|_{L^2([0,T);\R^m)}&\leq{\sqrt{\tfrac{L_\nu}{(2-\varepsilon)\lambda_{\min}(R)}}}\,\|\nabla\times \bm{A}_0\|_{L^2(\mathit{\Omega};\mathbb{R}^3)}\\&\qquad\nonumber+{\tfrac1{\sqrt{\varepsilon (2-\varepsilon)}\, \lambda_{\min}(R)}}\,\|\bm{v}\|_{L^2([0,T);\R^m)}.
		\end{align*}
		If, further, $\bm{A}_0=0$, then \eqref{eq:MQSoutputest3} implies
		\begin{equation}
		\|\bm{i}\|_{L^2([0,T);\R^m)}\leq \|R^{-1}\|\,\|\bm{v}\|_{L^2([0,T);\R^m)}=\tfrac{{1}}{\lambda_{\min}(R)}\,\|\bm{v}\|_{L^2([0,T);\R^m)}.
		\label{eq:MQSoutputest3alt}
		\end{equation}
		Note that in the above estimates, we can further replace $\lambda_{\min}(R)$ by $R$, if $m=1$.
		
		\item In systems theory and model reduction, estimates of the $L^2$-norm of the output by means of the $L^2$-norm of the input play a~crucial role. In the linear and time-invariant case, such estimates can be obtained by using the so-called \mbox{{\em $\mathcal{H}_\infty$-norm}} of the transfer function.
		{In particular, it has been shown in \textup{\cite{KerS21}} that discretizing the linear coupled MQS system \eqref{eq:MQS} using the finite element method
			and regularizing the resulting system, one obtains a~differential-algebraic system whose transfer function has the $\mathcal{H}_\infty$-norm coinciding with $\|R^{-1}\|$ independent of the fineness of the discretization. In this case, we have
			$$\|\bm{i}\|_{L^2([0,T);\R^m)}\leq \|R^{-1}\|\,\|\bm{v}\|_{L^2([0,T);\R^m)},$$
			and, further, in the case where $m=1$, this estimate is sharp in the sense that there exists a~sequence of inputs $(\bm{v}_k)_k$ in $L^2(\R_{\geq 0})$ such that for the corresponding sequence of outputs $(\bm{i}_k)_{k}$ of the system with $\bm{A}_0=0$, the sequence of quotients $\|\bm{i}_k\|_{L^2(\R_{\geq 0})}/\|\bm{v}_k\|_{L^2(\R_{\geq 0})}$ tends to $R^{-1}$.
			Thus, the estimate \eqref{eq:MQSoutputest3alt} is an~extension of this result
			to the quasilinear infinite-dimensional case, and it is presumably sharp.}
	\end{enumerate}
\end{remark}

\subsection{Input-to-state behavior}
\label{sec:is}

Next, we present a~quantitative estimate for the magnetic energy $E(\bm{A}(t))$ in terms of the $L^2$-norm of the input $\bm{v}$, if the coupled MQS system \eqref{eq:MQS} is initialized with
$\bm{A}_{0}\in X_0(\curl,\mathit{\Omega},\mathit{\Omega}_C)$.

\begin{proposition}\label{prop:stateest}
	Assume that $\mathit{\Omega}\subset\mathbb{R}^3$ with a~subdomain $\mathit{\Omega}_C$ satisfies Assumption~\textup{\ref{ass:omega}}. Further, let
	Assumptions~\textup{\ref{ass:material}} and~\textup{\ref{ass:init}} be fulfilled, and let $T\in\R_{\ge0}\cup\{\infty\}$,
	$\bm{v}\in L^2_{\rm loc}([0,T);\mathbb{R}^m)$, and
	$\bm{A}_{0}\in X_0(\curl,\mathit{\Omega},\mathit{\Omega}_C)$.
	Moreover,
	let $E$ be the magnetic energy as defined in \eqref{eq:varphiA}, and let $(\bm{A},\bm{i})$
	be a~solution of the coupled MQS system \eqref{eq:MQS}. Then for all
	$0\leq t< T$,
	\begin{align}
	E(\bm{A}(t))&\leq E(\bm{A}_0)+\tfrac14 \,{\|R^{-1/2}\bm{v}\|^2_{L^2([0,t);\R^m)}},
	\label{eq:MQSstateest1}\\
	\|\nabla\!\times\! \bm{A}(t)\|_{L^2(\mathit{\Omega};\mathbb{R}^3)}&\leq\sqrt{\tfrac{L_\nu}{m_\nu}}\,\|\nabla\!\times\! \bm{A}_0\|_{L^2(\mathit{\Omega};\mathbb{R}^3)}+\tfrac1{\sqrt{2m_\nu}}\,\|R^{-1/2}\bm{v}\|_{L^2([0,t);\R^m)},\label{eq:MQSstateest2}
	\end{align}
	where $m_\nu,L_\nu$ are the monotonicity and Lipschitz constants as in Assumption~\textup{\ref{ass:material}\,\ref{ass:material2})}. In particular, if $\bm{v}\in
	L^2([0,T);\R^m)$, then $\nabla\times \bm{A}\in L^\infty([0,T);L^2(\mathit{\Omega};\mathbb{R}^3))$ with
	\[
	\|\nabla\!\times\! \bm{A}\|_{L^\infty([0,T);L^2(\mathit{\Omega};\mathbb{R}^3))}
	\leq \sqrt{\tfrac{L_\nu}{m_\nu}}\,\|\nabla\!\times\! \bm{A}_0\|_{L^2(\mathit{\Omega};\mathbb{R}^3)}+
	\tfrac1{\sqrt{2m_\nu}}\,\|R^{-1/2}\bm{v}\|_{L^2([0,T);\R^m)}.
	\]
\end{proposition}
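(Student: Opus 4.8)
The plan is to reduce everything to Lemma~\ref{lem:est}, which already packages the energy balance into a convenient inequality with a free parameter $\varepsilon$. Since $\bm{A}_0\in X_0(\curl,\mathit{\Omega},\mathit{\Omega}_C)$, Lemma~\ref{lem:est} applies with $t_0=0$, so for any $t\in[0,T)$ and any $\varepsilon>0$ we have
\[
E(\bm{A}(t))-E(\bm{A}_0)\leq-\int_0^t\|\tfrac{\rm d}{{\rm d}\tau}\sqrt{\sigma}\bm{A}(\tau)\|^2_{L^2(\mathit{\Omega};\mathbb{R}^3)}\,{\rm d}\tau-\Bigl(1-\tfrac{\varepsilon}{2}\Bigr)\int_0^t\|R^{1/2}\bm{i}(\tau)\|_2^2\,{\rm d}\tau+\tfrac1{2\varepsilon}\int_0^t\|R^{-1/2}\bm{v}(\tau)\|_2^2\,{\rm d}\tau.
\]
For \eqref{eq:MQSstateest1} I would simply choose $\varepsilon=2$, which kills the $\bm{i}$-term entirely (its coefficient $1-\varepsilon/2$ vanishes), drop the nonnegative $\sqrt{\sigma}\bm{A}$-term, and read off $E(\bm{A}(t))\leq E(\bm{A}_0)+\tfrac14\|R^{-1/2}\bm{v}\|^2_{L^2([0,t);\R^m)}$ since $1/(2\varepsilon)=1/4$. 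This is the cleanest route; one could alternatively optimize over $\varepsilon$, but $\varepsilon=2$ already gives the stated constant.

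For \eqref{eq:MQSstateest2} I would feed \eqref{eq:MQSstateest1} into the two-sided equivalence \eqref{eq:equivalence}. The lower bound gives $\tfrac{m_\nu}{2}\|\nabla\times\bm{A}(t)\|^2_{L^2}\leq E(\bm{A}(t))$ and the upper bound gives $E(\bm{A}_0)\leq\tfrac{L_\nu}{2}\|\nabla\times\bm{A}_0\|^2_{L^2}$, so
\[
\tfrac{m_\nu}{2}\|\nabla\times\bm{A}(t)\|^2_{L^2(\mathit{\Omega};\mathbb{R}^3)}\leq\tfrac{L_\nu}{2}\|\nabla\times\bm{A}_0\|^2_{L^2(\mathit{\Omega};\mathbb{R}^3)}+\tfrac14\|R^{-1/2}\bm{v}\|^2_{L^2([0,t);\R^m)}.
\]
Dividing by $m_\nu/2$ and taking square roots, the only remaining step is the elementary inequality $\sqrt{a+b}\leq\sqrt{a}+\sqrt{b}$ for $a,b\geq0$, which converts the sum under the root into $\sqrt{\tfrac{L_\nu}{m_\nu}}\|\nabla\times\bm{A}_0\|_{L^2}+\tfrac1{\sqrt{2m_\nu}}\|R^{-1/2}\bm{v}\|_{L^2([0,t);\R^m)}$, matching the claimed bound.

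The final assertion is then immediate: if $\bm{v}\in L^2([0,T);\R^m)$, then for every $t\in[0,T)$ we have $\|R^{-1/2}\bm{v}\|_{L^2([0,t);\R^m)}\leq\|R^{-1/2}\bm{v}\|_{L^2([0,T);\R^m)}$, so the right-hand side of \eqref{eq:MQSstateest2} is bounded uniformly in $t$; taking the essential supremum over $t\in[0,T)$ yields $\nabla\times\bm{A}\in L^\infty([0,T);L^2(\mathit{\Omega};\mathbb{R}^3))$ with the stated bound. I do not expect any real obstacle here — the entire proposition is a packaging of Lemma~\ref{lem:est} with the energy equivalence \eqref{eq:equivalence}; the only point requiring a word of care is that Lemma~\ref{lem:est} is invoked in its ``$t_0=0$ allowed'' form, which is legitimate precisely because of the hypothesis $\bm{A}_0\in X_0(\curl,\mathit{\Omega},\mathit{\Omega}_C)$, and that $E(\bm{A}(t))$ is finite for $t>0$ so that the left-hand inequality $0\leq E(\bm{A}(t))$ used implicitly is meaningful.
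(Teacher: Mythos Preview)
Your proposal is correct and follows essentially the same route as the paper: invoke Lemma~\ref{lem:est} with $\varepsilon=2$ (using $\bm{A}_0\in X_0(\curl,\mathit{\Omega},\mathit{\Omega}_C)$ to allow $t_0=0$) to obtain \eqref{eq:MQSstateest1}, then combine with the energy equivalence \eqref{eq:equivalence} and the subadditivity of the square root to get \eqref{eq:MQSstateest2}, and finally pass to the supremum. The paper phrases the square-root step as $a^2+b^2\leq(a+b)^2$ rather than $\sqrt{a+b}\leq\sqrt a+\sqrt b$, but this is the same elementary fact.
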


\begin{proof}
	The estimate \eqref{eq:MQSstateest1} follows from Lemma~\ref{lem:est} by choosing $\varepsilon=2$. Further, by invoking \eqref{eq:equivalence}, we obtain from \eqref{eq:MQSstateest1} that
	\[\begin{aligned}
	\|\nabla\times \bm{A}(t)\|_{L^2(\mathit{\Omega};\mathbb{R}^3)}^2
	\leq&\,\tfrac{2}{m_\nu}\, E(\bm{A}(t))\\
	\leq&\,\tfrac{2}{m_\nu}\,E(\bm{A}_0)+\tfrac{1}{2m_\nu} {\|R^{-1/2}\bm{v}\|^2_{L^2([0,t);\R^m)}}\\
	\leq&\,\tfrac{L_\nu}{m_\nu}\|\nabla\times \bm{A}_0\|_{L^2(\mathit{\Omega};\mathbb{R}^3)}^2+
	\tfrac{1}{2m_\nu} {\|R^{-1/2}\bm{v}\|^2_{L^2([0,t);\R^m)}}\\
	\leq&\,{\Bigl(\sqrt{\tfrac{L_\nu}{m_\nu}}\,\|\nabla\times \bm{A}_0\|_{L^2(\mathit{\Omega};\mathbb{R}^3)}+
		\tfrac{1}{\sqrt{2m_\nu}}\, \|R^{-1/2}\bm{v}\|_{L^2([0,t);\R^m)}\Bigr)^2}.
	\end{aligned}\]
	Thus, \eqref{eq:MQSstateest2} follows by taking the square root.
\end{proof}

\begin{remark}
	Let $\lambda_{\min}(R)$ be the minimal {eigenvalue} of $R$. Then, under the assumptions of Proposition~\ref{prop:stateest},
	we obtain that
	\begin{align*}
	E(\bm{A}(t))&\,\leq E(\bm{A}_0)+\tfrac1{4\,{\lambda_{\min}(R)}} \,{\|\bm{v}(\tau)\|^2_{L^2([0,t);\R^m)}},
	\\
	\|\nabla\times \bm{A}(t)\|_{L^2(\mathit{\Omega};\mathbb{R}^3)}&\,\leq\sqrt{\tfrac{L_\nu}{m_\nu}}\,\|\nabla\times \bm{A}_0\|_{L^2(\mathit{\Omega};\mathbb{R}^3)}+\tfrac1{\sqrt{2m_\nu {\lambda_{\min}(R)}}}\,\|\bm{v}\|_{L^2([0,t);\R^m)},\\
	\|\nabla\times \bm{A}\|_{L^\infty([0,T);L^2(\mathit{\Omega};\mathbb{R}^3))}\!\!\!\!\!\!\!\!\!\!\!\!\!\!\!\!\!\!\!\!\!\!\\
	&\,\leq
	\sqrt{\tfrac{L_\nu}{m_\nu}}\,\|\nabla\times \bm{A}_0\|_{L^2(\mathit{\Omega};\mathbb{R}^3)}+\tfrac1{\sqrt{2m_\nu \lambda_{\min}(R)}}\,\|\bm{v}\|_{L^2([0,T);\R^m)}.
	\end{align*}
	We can again replace $\lambda_{\min}(R)$ by $R$ in the single-input single-output case, i.e., $m=1$.
\end{remark}

Finally, we analyze the dependence of $\|\bm{A}(t)\|_{L^2(\mathit{\Omega};\mathbb{R}^3)}$ upon the input
$\bm{v}$ and the initial value $\bm{A}_0$.
Hereby, we make essential use of \cite[Lemma~3.4]{ChReSt21} which states that there exists a~constant $L_C>0$ such that for all $\bm{A}\in X_0(\curl,\mathit{\Omega},\mathit{\Omega}_C)$,
\begin{equation}
\|\bm{A}\|_{L^2(\mathit{\Omega};\R^3)}^2\leq L_C\,\left(\|\bm{A}\|_{L^2(\mathit{\Omega_C};\R^3)}^2+\|\nabla\times \bm{A}\|_{L^2(\mathit{\Omega};\R^3)}^2\right).
\label{eq:curlest}
\end{equation}
As a~preliminary thought, we use the fact that for the electric conductivity $\sigma$
as in Assumption~\ref{ass:material}\,\ref{ass:material1}), one has
$\langle\sigma\bm{A}_1,\bm{A}_2\rangle_{L^2(\mathit{\Omega};\mathbb{R}^3)}=\sigma_C\langle\bm{A}_1,\bm{A}_2\rangle_{L^2(\mathit{\Omega}_C;\mathbb{R}^3)}$.
Then \eqref{eq:curlest} implies that
\begin{multline}
\langle\bm{A}_1,\bm{A}_2\rangle_{X_0(\curl,\mathit{\Omega},\mathit{\Omega}_C)}=
\langle\sigma\bm{A}_1,\bm{A}_2\rangle_{L^2(\mathit{\Omega};\mathbb{R}^3)}+\langle\nabla\times\bm{A}_1,\nabla\times\bm{A}_2\rangle_{L^2(\mathit{\Omega};\mathbb{R}^3)} \\
+\int_\mathit{\Omega} \chi^\top \bm{A}_1\,  {\rm d}\xi\cdot R^{-1} \int_\mathit{\Omega} \chi^\top \bm{A}_2\,  {\rm d}\xi\qquad\qquad\quad\quad
\label{eq:Xinner}
\end{multline}
defines an inner product in $X_0(\curl,\mathit{\Omega},\mathit{\Omega}_C)$ whose induced norm is equivalent to the standard norm in
$H_0(\curl,\mathit{\Omega})$.
Consider now the space
\begin{equation} X_0(\curl\!=\!0,\mathit{\Omega},\mathit{\Omega}_C) =\{\bm{A}\in X_0(\curl,\mathit{\Omega},\mathit{\Omega}_C)\enskip:\enskip\nabla\times\bm{A}=0\}
\label{eq:X0curl0}\end{equation}
which is a~closed subspace of both $X(\mathit{\Omega},\mathit{\Omega}_C)$ and $X_0(\curl,\mathit{\Omega},\mathit{\Omega}_C)$ with respect to the respective norms.

The following lemma is essential for our further analysis.

\begin{lemma}\label{lem:orthproj}
	Assume that $\mathit{\Omega}\subset\mathbb{R}^3$ with a~subdomain $\mathit{\Omega}_C$ satisfies Assumption~\textup{\ref{ass:omega}}. Let $P\in \mathcal{L}(X_0(\curl,\mathit{\Omega},\mathit{\Omega}_C))$ be the orthogonal projector onto $X_0(\curl\!=\!0,\mathit{\Omega},\mathit{\Omega}_C)$ with respect to the inner product \eqref{eq:Xinner}. Then the following statements hold:
	\begin{enumerate}[\rm a)]
		\item\label{lem:orthproja} The orthogonal projector $P$ can be uniquely extended to a~bounded projector $\widetilde{P}\in\mathcal{L}(X(\mathit{\Omega},\mathit{\Omega}_C))$, where $X(\mathit{\Omega},\mathit{\Omega}_C)$ is provided with the norm in $L^2(\mathit{\Omega};\mathbb{R}^3)$. The operator norm of $\widetilde{P}$ fulfills $\|\widetilde{P}\|\leq \sqrt{\tfrac{\gamma \,L_C}{\sigma_C}}$
		with $\sigma_C$ as in Assumption~\textup{\ref{ass:material}\,\ref{ass:material1})}, $L_C$ as in \eqref{eq:curlest}, and
		\begin{equation}\label{eq:cconst}
		\gamma={\sigma_C}+\|\chi\, R^{-1/2}\|^2_{L^2(\mathit{\Omega};\R^{3\times m})}.
		\end{equation}
		\item\label{lem:orthprojb} There exists a~constant ${L_1}>0$ such that for all $\bm{A}\in X_0(\curl,\mathit{\Omega},\mathit{\Omega}_C)$,
		\begin{equation}
		\|(I-P)\bm{A}\|_{L^2(\mathit{\Omega};\mathbb{R}^3)}\leq L_1\|\nabla\times \bm{A}\|_{L^2(\mathit{\Omega};\mathbb{R}^3)}.\label{eq:L1const}
		\end{equation}
		\item\label{lem:orthprojc} The operator
		\[\begin{aligned}
		\mathcal{T}:&&\!\!\im \widetilde{P}&\;\to \im\widetilde{P}^*,\\
		&&\!\!\bm{A}&\;\mapsto \widetilde{P}^*\Bigl(\sigma \bm{A}+\chi\, R^{-1}\int_\mathit{\Omega} \chi^\top \bm{A}\,  {\rm d}\xi\Bigr)
		\end{aligned}\]
		has a~bounded inverse. The operator norm of this inverse fulfills
		$\|\mathcal{T}^{-1}\|\leq \tfrac{L_C}{\sigma_C}$.
	\end{enumerate}
\end{lemma}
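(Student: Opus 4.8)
The plan is to prove the three claims in order; parts \ref{lem:orthproja}) and \ref{lem:orthprojc}) will be Hilbert-space arguments built on the Poincar\'e-type estimate \eqref{eq:curlest}, whereas part \ref{lem:orthprojb}) needs a compactness input and will be the genuine obstacle. Throughout I would work with the positive semidefinite symmetric bilinear form
\[
a(\bm{A}_1,\bm{A}_2)=\langle\sigma\bm{A}_1,\bm{A}_2\rangle_{L^2(\mathit{\Omega};\R^3)}+\Bigl\langle R^{-1}{\textstyle\int_\mathit{\Omega}}\chi^\top\bm{A}_1\,{\rm d}\xi,{\textstyle\int_\mathit{\Omega}}\chi^\top\bm{A}_2\,{\rm d}\xi\Bigr\rangle_2
\]
on $X(\mathit{\Omega},\mathit{\Omega}_C)$, which agrees with $\langle\cdot,\cdot\rangle_{X_0(\curl,\mathit{\Omega},\mathit{\Omega}_C)}$ whenever the $\curl$ of one argument vanishes, and which satisfies $a(\bm{A},\bm{A})=\sigma_C\|\bm{A}\|^2_{L^2(\mathit{\Omega}_C;\R^3)}+\|R^{-1/2}\int_\mathit{\Omega}\chi^\top\bm{A}\,{\rm d}\xi\|_2^2$. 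Using the operator norm \eqref{eq:chinorm} of $\chi\,R^{-1/2}$ together with the definition \eqref{eq:cconst} of $\gamma$ one gets $a(\bm{A},\bm{A})\le\gamma\,\|\bm{A}\|^2_{L^2(\mathit{\Omega};\R^3)}$, while if $\nabla\times\bm{A}=0$ then \eqref{eq:curlest} gives $\|\bm{A}\|^2_{L^2(\mathit{\Omega};\R^3)}\le L_C\|\bm{A}\|^2_{L^2(\mathit{\Omega}_C;\R^3)}\le\tfrac{L_C}{\sigma_C}\,a(\bm{A},\bm{A})$.

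For part \ref{lem:orthproja}) I would first bound $P$ on the subspace $X_0(\curl,\mathit{\Omega},\mathit{\Omega}_C)$, which is dense in $X(\mathit{\Omega},\mathit{\Omega}_C)$ for the $L^2$-norm by \cite[Lemma~3.4]{ChReSt21}. Since $P\bm{A}\in X_0(\curl\!=\!0,\mathit{\Omega},\mathit{\Omega}_C)$ and $P$ is orthogonal, the relation $\langle\bm{A}-P\bm{A},P\bm{A}\rangle_{X_0(\curl,\mathit{\Omega},\mathit{\Omega}_C)}=0$ becomes $a(\bm{A},P\bm{A})=a(P\bm{A},P\bm{A})$, and the Cauchy--Schwarz inequality for $a$ yields $a(P\bm{A},P\bm{A})\le a(\bm{A},\bm{A})$. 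Combining this with the two estimates above and $\nabla\times P\bm{A}=0$, I obtain $\|P\bm{A}\|^2_{L^2}\le\tfrac{L_C}{\sigma_C}a(P\bm{A},P\bm{A})\le\tfrac{L_C}{\sigma_C}a(\bm{A},\bm{A})\le\tfrac{\gamma L_C}{\sigma_C}\|\bm{A}\|^2_{L^2}$. Hence $P$ extends uniquely and continuously to $\widetilde P\in\mathcal{L}(X(\mathit{\Omega},\mathit{\Omega}_C))$ with $\|\widetilde P\|\le\sqrt{\gamma L_C/\sigma_C}$, and $\widetilde P$ is again a projector by continuity.

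For part \ref{lem:orthprojb}), note that $\nabla\times(I-P)\bm{A}=\nabla\times\bm{A}$ because $P\bm{A}$ is curl-free. Arguing by contradiction, if no such $L_1$ existed there would be $\bm{A}_n\in X_0(\curl,\mathit{\Omega},\mathit{\Omega}_C)$ such that $\bm{B}_n:=(I-P)\bm{A}_n$ has $\|\bm{B}_n\|_{L^2}=1$ and $\|\nabla\times\bm{A}_n\|_{L^2}\to0$. The expansion of $\|\bm{B}_n\|^2_{X_0(\curl,\mathit{\Omega},\mathit{\Omega}_C)}$ then shows $(\bm{B}_n)$ is bounded in $X_0(\curl,\mathit{\Omega},\mathit{\Omega}_C)$: the terms $\sigma_C\|\bm{B}_n\|^2_{L^2(\mathit{\Omega}_C)}$ and $\|R^{-1/2}\int_\mathit{\Omega}\chi^\top\bm{B}_n\,{\rm d}\xi\|_2^2$ are controlled by $\|\bm{B}_n\|_{L^2}=1$, and $\|\nabla\times\bm{B}_n\|_{L^2}=\|\nabla\times\bm{A}_n\|_{L^2}\to0$. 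By the compact embedding of $X_0(\curl,\mathit{\Omega},\mathit{\Omega}_C)$ into $L^2(\mathit{\Omega};\R^3)$ — a Weber--Picard-type result holding under Assumption~\ref{ass:omega}, and the point where the geometric and topological hypotheses on $\mathit{\Omega}$ are really used — some subsequence converges strongly in $L^2$ and weakly in $X_0(\curl,\mathit{\Omega},\mathit{\Omega}_C)$ to a limit $\bm{B}$ with $\|\bm{B}\|_{L^2}=1$. Weak convergence forces $\nabla\times\bm{B}=0$, and since each $\bm{B}_n$ lies in the closed (hence weakly closed) subspace $X_0(\curl\!=\!0,\mathit{\Omega},\mathit{\Omega}_C)^\perp$ of $X_0(\curl,\mathit{\Omega},\mathit{\Omega}_C)$, so does $\bm{B}$; therefore $\bm{B}\in X_0(\curl\!=\!0,\mathit{\Omega},\mathit{\Omega}_C)\cap X_0(\curl\!=\!0,\mathit{\Omega},\mathit{\Omega}_C)^\perp=\{0\}$, contradicting $\|\bm{B}\|_{L^2}=1$. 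This compactness step is the main obstacle; everything else is routine.

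For part \ref{lem:orthprojc}), I would first identify $\im\widetilde P=X_0(\curl\!=\!0,\mathit{\Omega},\mathit{\Omega}_C)$: this subspace is fixed pointwise by $P$, hence by $\widetilde P$, and being $L^2$-closed it contains every $\widetilde P\bm{A}=\lim_n P\bm{A}_n$. Writing $\Sigma\bm{A}=\sigma\bm{A}+\chi R^{-1}\int_\mathit{\Omega}\chi^\top\bm{A}\,{\rm d}\xi$, so that $\langle\Sigma\bm{A}_1,\bm{A}_2\rangle_{L^2}=a(\bm{A}_1,\bm{A}_2)$, one has for $\bm{A},\bm{C}\in\im\widetilde P$ (where $\widetilde P\bm{C}=\bm{C}$) the chain $\langle\mathcal{T}\bm{A},\bm{C}\rangle_{L^2}=\langle\widetilde P^*\Sigma\bm{A},\bm{C}\rangle_{L^2}=\langle\Sigma\bm{A},\widetilde P\bm{C}\rangle_{L^2}=\langle\Sigma\bm{A},\bm{C}\rangle_{L^2}=a(\bm{A},\bm{C})$. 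Thus $\mathcal{T}$ is the operator associated with the bounded bilinear form $a$ on the Hilbert space $\im\widetilde P$, on which $\nabla\times(\cdot)=0$ and hence $a$ is coercive with constant $\sigma_C/L_C$ by the estimate from the first paragraph. Lax--Milgram then provides, for each $\bm{g}\in\im\widetilde P^*$, some $\bm{A}\in\im\widetilde P$ with $a(\bm{A},\bm{C})=\langle\bm{g},\bm{C}\rangle_{L^2}$ for all $\bm{C}\in\im\widetilde P$; then $\mathcal{T}\bm{A}-\bm{g}\in\im\widetilde P^*$ is $L^2$-orthogonal to $\im\widetilde P$, hence lies in $\im\widetilde P^*\cap\ker\widetilde P^*=\{0\}$, so $\mathcal{T}$ maps onto $\im\widetilde P^*$. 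Finally, testing $\mathcal{T}\bm{A}=\bm{g}$ against $\bm{A}$ gives $\tfrac{\sigma_C}{L_C}\|\bm{A}\|^2_{L^2}\le a(\bm{A},\bm{A})=\langle\bm{g},\bm{A}\rangle_{L^2}\le\|\bm{g}\|_{L^2}\|\bm{A}\|_{L^2}$, which yields injectivity of $\mathcal{T}$ and $\|\mathcal{T}^{-1}\|\le L_C/\sigma_C$.
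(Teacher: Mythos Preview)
Your treatments of parts \ref{lem:orthproja}) and \ref{lem:orthprojc}) are correct and close to the paper's. For \ref{lem:orthproja}) the paper expands $\|P\bm{A}\|_{L^2(\Omega_C)}^2$ directly and adds nonnegative terms to reach $a(\bm{A},\bm{A})$, whereas you get the same inequality more cleanly via Cauchy--Schwarz for the semidefinite form $a$; the conclusions are identical. For \ref{lem:orthprojc}) the paper simply records the coercivity estimate $\langle\widetilde P\bm{A},\mathcal T\widetilde P\bm{A}\rangle_{L^2}\ge\tfrac{\sigma_C}{L_C}\|\widetilde P\bm{A}\|_{L^2}^2$ and declares $\mathcal T$ invertible; your Lax--Milgram argument with the explicit check that $\mathcal T\bm{A}-\bm g\in\im\widetilde P^*\cap\ker\widetilde P^*$ makes the surjectivity onto $\im\widetilde P^*$ fully transparent.

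Part \ref{lem:orthprojb}), however, has a genuine gap: the compact embedding $X_0(\curl,\Omega,\Omega_C)\hookrightarrow L^2(\Omega;\R^3)$ that you invoke is \emph{false}. The curl-free subspace $X_0(\curl\!=\!0,\Omega,\Omega_C)$ contains $\nabla H^1_0(\Omega)\cap(\nabla H^1_{0,\Gamma})^\perp$, which is infinite-dimensional because the quotient $H^1_0(\Omega)/H^1_{0,\Gamma}$ is parametrized (via the interface trace) by $H^{1/2}(\partial\Omega_C)$ modulo piecewise constants. Any $L^2$-orthonormal sequence in this infinite-dimensional curl-free subspace is bounded in $X_0(\curl,\Omega,\Omega_C)$ but has no $L^2$-convergent subsequence. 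So your contradiction argument cannot be closed as written; the Weber--Picard compactness applies to $H_0(\curl)\cap H(\divg)$, and elements of $X(\Omega,\Omega_C)$ are only divergence-free separately in $\Omega_C$ and in each component of $\Omega_I$, not globally.

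The paper avoids compactness entirely. It shows that the curl map $\Psi:X_0(\curl,\Omega,\Omega_C)\to L^2(\divg\!=\!0,\Omega;\R^3)$ is surjective (using the vector-potential result \cite[Theorem~3.17]{ABDG98} together with the gauge decomposition defining $X(\Omega,\Omega_C)$), observes that its restriction to $\ker P$ is bijective, and then appeals to the open mapping theorem to get a bounded inverse, which is precisely \eqref{eq:L1const}. This closed-range route is what you need to replace your compactness step.
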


\begin{proof}
	\ref{lem:orthproja}) Let $\bm{A}\in X_0(\curl,\mathit{\Omega},\mathit{\Omega}_C)$. Then $\nabla\times (P\bm{A})=0$, and we obtain by using the definition of the inner product \eqref{eq:Xinner} that
	\[\begin{aligned}
	0 \,&\, = \langle P\bm{A},(I-P)\bm{A}\rangle_{X_0(\curl,\mathit{\Omega},\mathit{\Omega}_C)} \\
	&\,=\int_{\mathit{\Omega}}\sigma(P\bm{A})\cdot((I-P)\bm{A})\,{\rm d}\xi+
	\int_\mathit{\Omega} \chi^\top (P\bm{A})\,  {\rm d}\xi\cdot R^{-1}
	\int_\mathit{\Omega} \chi^\top ((I-P)\bm{A})\,  {\rm d}\xi.
	\end{aligned}\]
	%\newpage
	%\noindent
	This relation leads to
	%\allowdisplaybreaks
	\begin{align*}
	\|P\bm{A}&\|_{L^2(\mathit{\Omega};\R^3)}^2
	\overset{\eqref{eq:curlest}}{\leq}
	%L_C\,\Bigl(\|P\bm{A}\|_{L^2(\mathit{\Omega_C};\R^3)}^2+
	%\|\nabla\times (P\bm{A})\|_{L^2(\mathit{\Omega};\R^3)}^2\Bigr)\\
	%= &\;
	L_C\,\|P\bm{A}\|_{L^2(\mathit{\Omega}_C;\R^3)}^2
	= \tfrac{L_C}{\sigma_C}\,\int_{\mathit{\Omega}}\sigma\|P\bm{A}\|_2^2\,{\rm d}\xi\\
	\leq &\;\tfrac{L_C}{\sigma_C}\,\biggl(\int_{\mathit{\Omega}}\sigma\|P\bm{A}\|_2^2\,{\rm d}\xi
	{+\int_{\mathit{\Omega}}\!\sigma\|(I-P)\bm{A}\|_2^2\,{\rm d}\xi} 		\biggr.\\
	&+%\underbrace{
		{2}\!\int_{\mathit{\Omega}}\sigma(P\bm{A})\cdot((I-P)\bm{A})\,{\rm d}\xi+{2}\!\int_\mathit{\Omega} \!\chi^\top (P\bm{A})\,  {\rm d}\xi\cdot R^{-1}\! \int_\mathit{\Omega} \!\chi^\top ((I-P)\bm{A})\,  {\rm d}\xi
	%}_{=0}
	\\
	&\biggl.\;
	{+\Bigl\|  R^{-1/2}\int_\mathit{\Omega} \chi^\top (P\bm{A})\,  {\rm d}\xi\Bigr\|_2^2}
	+\Bigl\| R^{-1/2} \int_\mathit{\Omega} \chi^\top ((I-P)\bm{A})\,  {\rm d}\xi\Bigr\|_2^2\biggr)\\
	= &\;\tfrac{L_C}{\sigma_C}\,\biggl(\int_{\mathit{\Omega}_C}  {\sigma_C}\|\bm{A}\|_2^2\,{\rm d}\xi+
	\Bigl\| R^{-1/2} \int_\mathit{\Omega} \chi^\top \bm{A}\,  {\rm d}\xi\Bigr\|_2^2\biggr)
	\leq \tfrac{\gamma\,L_C}{\sigma_C}\,\|\bm{A}\|_{L^2(\mathit{\Omega};\R^3)}^2.
	\end{align*}
	Since $X_0(\curl,\mathit{\Omega},\mathit{\Omega}_C)$ is dense in $X(\mathit{\Omega},\mathit{\Omega}_C)$, we can make use of \cite[Theorem~E5.3]{Alt16} to see that the projector
	$P$ uniquely extends to an operator $\widetilde{P}\in\mathcal{L}(X(\mathit{\Omega},\mathit{\Omega}_C))$
	{with $\|\widetilde{P}\|^2\leq {\tfrac{\gamma\,L_C}{\sigma_C}}$}.
	As the operator $\widetilde{P}^2-\widetilde{P}\in\mathcal{L}(X(\mathit{\Omega},\mathit{\Omega}_C))$ vanishes on the dense subspace $X_0(\curl,\mathit{\Omega},\mathit{\Omega}_C)$, it has to vanish everywhere. Consequently, $\widetilde{P}$ is a~projector.
	
	\ref{lem:orthprojb}) Step 1: First, we show that the mapping
	\begin{align*}
	\Psi:\quad X_0(\curl,\mathit{\Omega},\mathit{\Omega}_C)&\,\to\,  L^2(\divg\!=\!0,\mathit{\Omega};\mathbb{R}^3),\\
	\bm{A}&\,\mapsto\, \nabla\times \bm{A}
	\end{align*}
	is surjective.
	Let $\bm{F}\in L^2(\divg\!=\!0,\mathit{\Omega};\mathbb{R}^3)$. Then, by \cite[Theorem~3.17]{ABDG98}, there exists some $\mathbf{C}\in H_0(\curl,\mathit{\Omega})$ such that $\bm{F}=\nabla\times \mathbf{C}$. By definition of $X(\mathit{\Omega},\mathit{\Omega}_C)$, we may consider an orthogonal decomposition {$\mathbf{C}=\bm{A}+\nabla {\psi}$}
	with
	$\bm{A}\in X(\mathit{\Omega},\mathit{\Omega}_C)$ and \mbox{$\psi\in {H_0^1}(\mathit{\Omega})$} which is constant on each boundary component of $\mathit{\Omega}_C$.
	Consequently, the tangential boundary trace of $\nabla\psi$ vanishes, whence this also holds for \mbox{$\bm{A}=\mathbf{C}-\nabla {\psi}$}. Then we obtain $\nabla\times \bm{A}=\nabla\times \mathbf{C}-\nabla\times \nabla\psi=\nabla\times \mathbf{C}=\bm{F}$
	and $\bm{A}\in X_0(\curl,\mathit{\Omega},\mathit{\Omega}_C)$, i.e., $\bm{F}\in\im \Psi$.
	
	Step 2: Next, we show that the restriction $\left.\Psi\right|_{\ker P}$ of $\Psi$ to $\ker P$ is bijective.
	{By definition of $P$, we have $\ker\Psi=\im P$, which is the orthogonal complement of $\ker P$ with respect to the inner product~\eqref{eq:Xinner}. Therefore, $\left.\Psi\right|_{\ker P}$ is injective.}
	To prove surjectivity, let $\bm{F}\in L^2(\divg\!=\!0,\mathit{\Omega};\mathbb{R}^3)$.
	Then, by Step~1, there exists some $\bm{A}\in X_0(\curl,\mathit{\Omega},\mathit{\Omega}_C)$ with $\bm{F}=\nabla\times \bm{A}$, and thus
	\[\nabla\times ((I-P)\bm{A})=\nabla\times ((I-P)\bm{A})+\nabla\times (P\bm{A})=\nabla\times \bm{A}=\bm{F}.\]
	
	Step 3: Finally, we show that there exists $L_1>0$ such that \eqref{eq:L1const} holds.
	We have seen in Step~2 that $\left.\Psi\right|_{\ker P}:\ker P\to L^2(\divg\!=\!0,\mathit{\Omega};\mathbb{R}^3)$ is bijective. It can be further shown that this mapping is bounded. Then the inverse mapping theorem \linebreak \cite[Theorem~7.8]{Alt16} yields that $\left.\Psi\right|_{\ker P}$ has a~bounded inverse, which implies that there exists a~constant $c_1>0$ such that for all $\bm{A}\in X_0(\curl,\mathit{\Omega},\mathit{\Omega}_C)$,
	$$
	\|(I-P)\bm{A}\|_{L^2(\mathit{\Omega};\R^3)}^2+\|\nabla\times ((I-P)\bm{A})\|_{L^2(\mathit{\Omega};\R^3)}^2\leq c_1 \|\nabla\times(I-P)\bm{A}\|_{L^2(\mathit{\Omega};\R^3)}^2.
	$$
	Then, clearly, $c_1>1$, and we obtain that \eqref{eq:L1const} holds with $L_1=c_1-1>0$.
	
	\ref{lem:orthprojc}) Let $\bm{A}\in X_0(\curl,\mathit{\Omega},\mathit{\Omega}_C)$. Then
	\allowdisplaybreaks
	\begin{align*}
	\langle \widetilde{P}\bm{A},&\,\mathcal{T}\widetilde{P}\bm{A}\rangle_{L^2(\mathit{\Omega};\mathbb{R}^3)}
	=\langle \widetilde{P}\bm{A},\mathcal{T}{P}\bm{A}\rangle_{L^2(\mathit{\Omega};\mathbb{R}^3)}\\
	&\;= \Bigl\langle\widetilde{P}\bm{A}, \widetilde{P}^*\bigl(\sigma P\bm{A}+\chi R^{-1}\int_\mathit{\Omega} \chi^\top ( P\bm{A})\,  {\rm d}\xi \bigr) \Bigr\rangle_{L^2(\mathit{\Omega};\mathbb{R}^3)}\\
	&\;= \Bigl\langle\widetilde{P}\widetilde{P}\bm{A}, \sigma P\bm{A}+\chi R^{-1}\int_\mathit{\Omega} \chi^\top ( P\bm{A})\,  {\rm d}\xi \Bigr\rangle_{L^2(\mathit{\Omega};\mathbb{R}^3)}\\
	&\;= \Bigl\langle P\bm{A}, \sigma P\bm{A}+\chi R^{-1}\int_\mathit{\Omega} \chi^\top ( {P}\bm{A})\,  {\rm d}\xi \Bigr\rangle_{L^2(\mathit{\Omega};\mathbb{R}^3)}\\
	&\;=\int_{\mathit{\Omega}}\sigma(P\bm{A})\cdot(P\bm{A})\,{\rm d}\xi+\int_\mathit{\Omega} \chi^\top (P\bm{A})\,  {\rm d}\xi\cdot R^{-1} \int_\mathit{\Omega} \chi^\top (P\bm{A})\,  {\rm d}\xi\\
	&\;\geq\int_{\mathit{\Omega}}\sigma(P\bm{A})\cdot(P\bm{A})\,{\rm d}\xi\\
	&\;=\sigma_C\Bigl(\|P\bm{A}\|_{L^2(\mathit{\Omega}_C;\mathbb{R}^3)}^2+\|\nabla\times (P\bm{A})\|_{L^2(\mathit{\Omega};\mathbb{R}^3)}^2\Bigr)\\
	&\;
	\overset{\eqref{eq:curlest}}{\geq}\,\tfrac{\sigma_C}{L_C}\|P\bm{A}\|_{L^2(\mathit{\Omega};\mathbb{R}^3)}^2.
	\end{align*}
	Since $X_0(\curl,\mathit{\Omega},\mathit{\Omega}_C)$ is dense in $X(\mathit{\Omega},\mathit{\Omega}_C)$, we have
	\[\langle \widetilde{P}\bm{A},\mathcal{T}\widetilde{P}\bm{A}\rangle_{L^2(\mathit{\Omega};\mathbb{R}^3)}\geq \tfrac{\sigma_C}{L_C}\|\widetilde{P}\bm{A}\|_{L^2(\mathit{\Omega};\mathbb{R}^3)}^2 \,\text{ for all }\,\bm{A}\in X(\mathit{\Omega},\mathit{\Omega}_C).\]
	Consequently, $\mathcal{T}$ has a~bounded inverse with $\|\mathcal{T}^{-1}\|\leq \frac{L_C}{\sigma_C}$.
\end{proof}

For the next result on the dependence of the $L^2$-norm of $\bm{A}(t)$ upon the input
and the initial value, we recall that we use the identification \eqref{eq:L23m} and the norm \eqref{eq:chinorm} on $L^2(\mathit{\Omega};\mathbb{R}^{3\times m})$.

\begin{theorem}\label{thm:stateest2}
	Assume that $\mathit{\Omega}\subset\mathbb{R}^3$ with a~subdomain $\mathit{\Omega}_C$
	satisfies Assumption~\textup{\ref{ass:omega}}. Further, let
	Assumptions~\textup{\ref{ass:material}} and~\textup{\ref{ass:init}} be fulfilled, and let $T\in\R_{\ge0}\cup\{\infty\}$, \mbox{$\bm{v}\in L^2_{\rm loc}([0,T);\mathbb{R}^m)$}, and
	$\bm{A}_{0}\in X_0(\curl,\mathit{\Omega},\mathit{\Omega}_C)$. Moreover,
	let $E$
	be the magnetic energy as defined in \eqref{eq:varphiA}, and let $(\bm{A},\bm{i})$
	be a~solution of the coupled MQS system \eqref{eq:MQS}. Let $m_\nu$ and $L_\nu$ be as in Assumption~\textup{\ref{ass:material}\,\ref{ass:material2})}, $L_C$ as in \eqref{eq:curlest}, $L_1$ as in Lemma~\textup{\ref{lem:orthproj}\,\ref{lem:orthprojb})}, and $\gamma$ as in \eqref{eq:cconst}.
	\begin{enumerate}[\rm a)]
		\item\label{thm:stateest2a} For all $0<t\leq T$,
		\begin{multline*}
		\|\bm{A}(t)\|_{L^2(\mathit{\Omega};\mathbb{R}^3)}
		\leq L_1\sqrt{\tfrac{L_\nu}{m_\nu}}\,\|\nabla\times \bm{A}_0\|_{L^2(\mathit{\Omega};\mathbb{R}^3)}+\tfrac{L_1}{\sqrt{2m_\nu}}\,\|R^{-1/2}\bm{v}\|_{L^2([0,t);\R^m)}\\+\sqrt{\tfrac{\gamma\,L_C}{\sigma_C}}\left(\!
		\|\bm{A}_0\|_{L^2(\mathit{\Omega};\mathbb{R}^3)}\!+\tfrac{L_C}{\sigma_C}  \|\chi R^{-1/2}\|_{L^2(\mathit{\Omega};\mathbb{R}^{3\times m})} \left\|\int_0^t \!\! R^{-1/2} \bm{v}(\tau){\rm d}\tau\right\|_2\right).
		\end{multline*}
		\item\label{thm:stateest2b}
		If $\chi\in L^2(\divg\!=\!0,\mathit{\Omega};\R^{3})^{1\times m}$, then
		\begin{multline*}
		\|\bm{A}(t)\|_{L^2(\mathit{\Omega};\mathbb{R}^3)}
		\leq\sqrt{\tfrac{\gamma\,L_C}{\sigma_C}}\;\|\bm{A}_0\|_{L^2(\mathit{\Omega};\mathbb{R}^3)}\nonumber
		\\+L_1\sqrt{\tfrac{L_\nu}{m_\nu}}\,\|\nabla\times \bm{A}_0\|_{L^2(\mathit{\Omega};\mathbb{R}^3)}
		+\tfrac{L_1}{\sqrt{2m_\nu}}\,\|R^{-1/2}\bm{v}\|_{L^2([0,t);\R^m)}.
		\end{multline*}
	\end{enumerate}
\end{theorem}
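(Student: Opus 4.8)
The plan is to decompose $\bm{A}(t)=(I-P)\bm{A}(t)+P\bm{A}(t)$ by means of the orthogonal projector $P$ of Lemma~\ref{lem:orthproj} (for $t>0$ with $\bm{A}(t)\in X_0(\curl,\mathit{\Omega},\mathit{\Omega}_C)$, so that $P\bm{A}(t)=\widetilde{P}\bm{A}(t)$), and to estimate the two parts separately. The complementary part is immediate: Lemma~\ref{lem:orthproj}\,\ref{lem:orthprojb}) gives $\|(I-P)\bm{A}(t)\|_{L^2(\mathit{\Omega};\mathbb{R}^3)}\le L_1\|\nabla\times\bm{A}(t)\|_{L^2(\mathit{\Omega};\mathbb{R}^3)}$, and \eqref{eq:MQSstateest2} in Proposition~\ref{prop:stateest} bounds $\|\nabla\times\bm{A}(t)\|_{L^2(\mathit{\Omega};\mathbb{R}^3)}$ by $\sqrt{L_\nu/m_\nu}\,\|\nabla\times\bm{A}_0\|_{L^2(\mathit{\Omega};\mathbb{R}^3)}+\tfrac1{\sqrt{2m_\nu}}\,\|R^{-1/2}\bm{v}\|_{L^2([0,t);\R^m)}$. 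This accounts for the two $L_1$-terms in a) and for the entire $\bm{v}$-dependence there except for the $\|\int_0^t R^{-1/2}\bm{v}\|_2$ contribution.

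For the curl-free part $P\bm{A}(t)$ I would work with the operator $\mathcal{T}$ of Lemma~\ref{lem:orthproj}\,\ref{lem:orthprojc}). First, one observes that $\widetilde{P}^*\bigl(\sigma(I-P)\bm{A}(t)+\chi R^{-1}\!\int_\mathit{\Omega}\chi^\top(I-P)\bm{A}(t)\,{\rm d}\xi\bigr)=0$: the bracketed field lies in $X(\mathit{\Omega},\mathit{\Omega}_C)$ and is $L^2$-orthogonal to every $P\bm{G}$ with $\bm{G}\in X_0(\curl,\mathit{\Omega},\mathit{\Omega}_C)$, because $\nabla\times(P\bm{G})=0$ and $P$ is orthogonal with respect to \eqref{eq:Xinner}; since the set of such $P\bm{G}$ equals $X_0(\curl\!=\!0,\mathit{\Omega},\mathit{\Omega}_C)$, which is dense in $\im\widetilde{P}$, the claim follows. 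Hence, by the definition of $\mathcal{T}$, $\mathcal{T}(P\bm{A}(t))=\widetilde{P}^*\bigl(\sigma\bm{A}(t)+\chi R^{-1}\!\int_\mathit{\Omega}\chi^\top\bm{A}(t)\,{\rm d}\xi\bigr)$, and likewise for $\bm{A}_0$. Next I would insert the time-independent, curl-free test field $\bm{F}=P\bm{G}$ into the weak formulation \eqref{eq:weak}: the reluctivity term disappears and, integrating in time (legitimate since $\bm{A}_0\in X_0(\curl,\mathit{\Omega},\mathit{\Omega}_C)$ yields $\sigma\bm{A}\in H^1([0,T];X(\mathit{\Omega},\mathit{\Omega}_C))$ and $\int_\mathit{\Omega}\chi^\top\bm{A}\in H^1([0,T];\R^m)$), one gets $\langle\sigma\bm{A}(t)-\sigma\bm{A}_0,P\bm{G}\rangle_{L^2(\mathit{\Omega};\mathbb{R}^3)}=\bigl(\int_0^t\bm{i}(\tau)\,{\rm d}\tau\bigr)^{\!\top}\!\int_\mathit{\Omega}\chi^\top P\bm{G}\,{\rm d}\xi$. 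Combining this with the integrated coupling law $\int_\mathit{\Omega}\chi^\top(\bm{A}(t)-\bm{A}_0)\,{\rm d}\xi=\int_0^t(\bm{v}(\tau)-R\,\bm{i}(\tau))\,{\rm d}\tau$, the contribution of the unknown current $\int_0^t\bm{i}(\tau)\,{\rm d}\tau$ cancels, and one arrives at
\[\mathcal{T}(P\bm{A}(t))-\mathcal{T}(P\bm{A}_0)=\widetilde{P}^*\Bigl(\chi R^{-1}\!\int_0^t\bm{v}(\tau)\,{\rm d}\tau\Bigr).\]
Applying $\mathcal{T}^{-1}$ and using $\|\mathcal{T}^{-1}\|\le L_C/\sigma_C$, $\|\widetilde{P}^*\|=\|\widetilde{P}\|\le\sqrt{\gamma L_C/\sigma_C}$ from Lemma~\ref{lem:orthproj}, the bound $\|P\bm{A}_0\|_{L^2(\mathit{\Omega};\mathbb{R}^3)}\le\sqrt{\gamma L_C/\sigma_C}\,\|\bm{A}_0\|_{L^2(\mathit{\Omega};\mathbb{R}^3)}$, and $\|\chi R^{-1}\!\int_0^t\bm{v}\|_{L^2(\mathit{\Omega};\mathbb{R}^3)}\le\|\chi R^{-1/2}\|_{L^2(\mathit{\Omega};\mathbb{R}^{3\times m})}\,\|\int_0^t R^{-1/2}\bm{v}(\tau)\,{\rm d}\tau\|_2$ (the last via the identification \eqref{eq:L23m} and the norm \eqref{eq:chinorm}), one obtains precisely the bound for $\|P\bm{A}(t)\|_{L^2(\mathit{\Omega};\mathbb{R}^3)}$ stated in a). Summing the two parts proves a).

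For b) I would note that a divergence-free $\chi$ is $L^2$-orthogonal to $X_0(\curl\!=\!0,\mathit{\Omega},\mathit{\Omega}_C)$: every element of this space is the gradient of an $H_0^1(\mathit{\Omega})$-function (here the simple connectedness of $\mathit{\Omega}$ from Assumption~\ref{ass:omega} enters), and $\chi$ pairs to zero with such gradients. Hence $\chi\perp\im\widetilde{P}$, i.e.\ $\widetilde{P}^*\chi=0$, so the $\bm{v}$-dependent term in the estimate for $P\bm{A}(t)$ vanishes, $P\bm{A}(t)=P\bm{A}_0$, and $\|P\bm{A}(t)\|_{L^2(\mathit{\Omega};\mathbb{R}^3)}\le\sqrt{\gamma L_C/\sigma_C}\,\|\bm{A}_0\|_{L^2(\mathit{\Omega};\mathbb{R}^3)}$; together with the $(I-P)$-estimate this is exactly b). I expect the main obstacle to be the cancellation identity for $\mathcal{T}(P\bm{A}(t))$: it relies both on the reduction $\mathcal{T}(P\bm{A})=\widetilde{P}^*(\sigma\bm{A}+\chi R^{-1}\!\int_\mathit{\Omega}\chi^\top\bm{A}\,{\rm d}\xi)$, which uses the precise form of the inner product \eqref{eq:Xinner}, and on the careful bookkeeping that makes $\int_0^t\bm{i}$ drop out when the weak equation tested against curl-free fields is combined with the integrated coupling law; everything else is bounded-operator arithmetic and an appeal to the already available estimates.
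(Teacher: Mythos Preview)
Your proposal is correct and follows essentially the same route as the paper: decompose $\bm{A}(t)$ via the projector $P$, bound $(I-P)\bm{A}(t)$ through Lemma~\ref{lem:orthproj}\,\ref{lem:orthprojb}) and Proposition~\ref{prop:stateest}, and derive the identity $\mathcal{T}(P\bm{A}(t)-P\bm{A}_0)=\widetilde{P}^*\chi R^{-1}\!\int_0^t\bm{v}(\tau)\,{\rm d}\tau$ by testing the weak formulation against curl-free fields and eliminating $\int_0^t\bm{i}$ via the coupling law. The only cosmetic differences are that the paper obtains the reduction $\mathcal{T}(P\bm{A})=\widetilde{P}^*(\sigma\bm{A}+\chi R^{-1}\!\int_\Omega\chi^\top\bm{A}\,{\rm d}\xi)$ by directly invoking self-adjointness of $P$ in the inner product \eqref{eq:Xinner} rather than your orthogonality argument for the $(I-P)$ contribution, and in part~\ref{thm:stateest2b}) the paper proves $\chi\perp X_0(\curl\!=\!0,\mathit{\Omega},\mathit{\Omega}_C)$ by writing $\chi=\nabla\times\bm{F}$ (via \cite[Theorem~3.17]{ABDG98}) and integrating by parts, whereas you write the curl-free test fields as gradients; both arguments are valid.
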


\begin{proof}
	\ref{thm:stateest2a})
	Let $(\bm{A},\bm{i})$ be a~solution of  \eqref{eq:MQS}, and let \mbox{$P\in \mathcal{L}(X_0(\curl,\mathit{\Omega},\mathit{\Omega}_C))$} be the orthogonal projector onto $X_0(\curl\!=\!0,\mathit{\Omega},\mathit{\Omega}_C)$ as defined in \eqref{eq:X0curl0}, where $X_0(\curl,\mathit{\Omega},\mathit{\Omega}_C)$ is provided with the inner product \eqref{eq:Xinner}.
	Then the definition of the solution, see Section~\ref{sec:solution}, yields that for all
	$\bm{F}\!\!\in\! X_0(\curl,\mathit{\Omega},\mathit{\Omega}_C)$,
	\[\begin{aligned}
	\tfrac{\rm d }{{\rm d} t}\int_\mathit{\Omega} & \sigma \bm{A}(t)\cdot (P\bm{F})\, {\rm d}\xi+
	\int_\mathit{\Omega} \nu(\cdot,\|\nabla\times \bm{A}(t)\|_2)(\nabla\times \bm{A}(t))\cdot(\nabla\times (P\bm{F}))\, {\rm d}\xi \\
	= &\;
	\int_\mathit{\Omega} \chi\, \bm{i}(t)\cdot (P\bm{F})\,  {\rm d}\xi\\
	= &\;
	\bm{v}(t)\cdot R^{-1}\int_\mathit{\Omega} \chi^\top (P\bm{F})\,  {\rm d}\xi -
	\tfrac{\rm d }{{\rm d} t}\!\int_\mathit{\Omega} \chi^\top {\bm{A}(t)}\,  {\rm d}\xi\cdot R^{-1}\int_\mathit{\Omega} \chi^\top (P\bm{F})\,  {\rm d}\xi.
	\end{aligned}\]
	Since $\nabla\times (P\bm{F})=0$, this equation reduces to
	\begin{equation}
	\arraycolsep=2pt
	\begin{aligned}
	\tfrac{\rm d }{{\rm d} t} & \left(\int_\mathit{\Omega} \sigma \bm{A}(t)\cdot (P\bm{F})\, {\rm d}\xi+\int_\mathit{\Omega} \chi^\top {\bm{A}(t)}\,  {\rm d}\xi\cdot R^{-1}\int_\mathit{\Omega} \chi^\top (P\bm{F})\,  {\rm d}\xi\right)\\
	&\quad = \bm{v}(t)\cdot R^{-1}\int_\mathit{\Omega} \chi^\top (P\bm{F})\,  {\rm d}\xi.
	\end{aligned}
	\label{eq:weak_new1}
	\end{equation}
	By using that $P\in\mathcal{L}(X_0(\curl,\mathit{\Omega},\mathit{\Omega}_C))$ is an~orthogonal projector onto the space $X_0(\curl\!=\!0,\mathit{\Omega},\mathit{\Omega}_C)$
	with respect to the inner product \eqref{eq:Xinner}
	and, by \cite[Theorem~7.1]{ChReSt21}, $\bm{A}(t)\in X_0(\curl,\mathit{\Omega},\mathit{\Omega}_C)$ for almost all $t\in[0,T)$, we obtain
	\[\begin{aligned}
	\int_\mathit{\Omega} \sigma &\bm{A}(t)\cdot (P\bm{F})\, {\rm d}\xi+\int_\mathit{\Omega} \chi^\top\bm{A}(t)\,  {\rm d}\xi\cdot R^{-1}\int_\mathit{\Omega} \chi^\top (P\bm{F})\,  {\rm d}\xi\\
	&\enskip=\int_\mathit{\Omega} \sigma \bm{A}(t)\cdot (P\bm{F})\, {\rm d}\xi+\int_\mathit{\Omega} \chi^\top \bm{A}(t)\,  {\rm d}\xi\cdot R^{-1}\int_\mathit{\Omega} \chi^\top (P\bm{F})\,  {\rm d}\xi \\
	&\qquad
	+\int_\mathit{\Omega} (\nabla\times \bm{A})\cdot (\underbrace{\nabla\times (P\bm{F}))}_{=0}\, {\rm d}\xi\\
	&\overset{\eqref{eq:Xinner}}{=} \;\langle\bm{A}(t),P\bm{F}\rangle_{X_0(\curl,\mathit{\Omega},\mathit{\Omega}_C)} =\langle P\bm{A}(t),\bm{F}\rangle_{X_0(\curl,\mathit{\Omega},\mathit{\Omega}_C)}\\
	&\enskip = \int_\mathit{\Omega} \sigma (P\bm{A}(t))\cdot \bm{F}\, {\rm d}\xi+\int_\mathit{\Omega} \chi^\top (P\bm{A}(t))\,  {\rm d}\xi \cdot R^{-1}\int_\mathit{\Omega} \chi^\top {\bm{F}}\,  {\rm d}\xi\\
	&\qquad +\int_\mathit{\Omega} \underbrace{(\nabla\times (P\bm{A}(t)))}_{=0}\cdot (\nabla\times\bm{F})\, {\rm d}\xi\\
	&\enskip=\int_\mathit{\Omega} \sigma (P\bm{A}(t))\cdot \bm{F}\, {\rm d}\xi+\int_\mathit{\Omega} \chi^\top (P\bm{A}(t))\,  {\rm d}\xi \cdot R^{-1}\int_\mathit{\Omega} \chi^\top \bm{F}\,  {\rm d}\xi.
	\end{aligned}\]
	Since by Lemma~\ref{lem:orthproj}\,\ref{lem:orthproja}), $P$ extends to a~projector $\widetilde{P}\in\mathcal{L}(X(\mathit{\Omega},\mathit{\Omega}_C))$, we further have
	\begin{align*}
	\bm{v}(t)\cdot R^{-1}\int_\mathit{\Omega} \chi^\top (P\bm{F})\,  {\rm d}\xi
	=&
	\int_\mathit{\Omega} (\chi\, R^{-1}\bm{v}(t)) \cdot  (P\bm{F})\,  {\rm d}\xi
	=
	\langle \chi\, R^{-1} \bm{v}(t), P\bm{F}\rangle_{L^2(\mathit{\Omega};\mathbb{R}^3)}\\
	=&\,
	\langle \chi\, R^{-1} \bm{v}(t), \widetilde{P}\bm{F}\rangle_{L^2(\mathit{\Omega};\mathbb{R}^3)}
	=
	\langle \widetilde{P}^* \chi\, R^{-1} \bm{v}(t), \bm{F}\rangle_{L^2(\mathit{\Omega};\mathbb{R}^3)},
	\end{align*}
	where $\widetilde{P}^*$ is the adjoint of $\widetilde{P}$.
	By using the density of the space $X_0(\curl,\mathit{\Omega},\mathit{\Omega}_C)$ in $X(\mathit{\Omega},\mathit{\Omega}_C)$ and the latter two equations, the integration of \eqref{eq:weak_new1} implies that
	\[t\mapsto \sigma (P\bm{A}(t))+\chi\, R^{-1}\int_\mathit{\Omega}  \chi^\top (P\bm{A}(t))\,  {\rm d}\xi \]
	is a~continuous mapping from $[0,T)$ to $X(\mathit{\Omega},\mathit{\Omega}_C)$ with
	\begin{multline*}
	\left(\sigma (P\bm{A}(t))+\chi\, R^{-1}\int_\mathit{\Omega}  \chi^\top (P\bm{A}(t))\,  {\rm d}\xi\right)-\left(\sigma (P\bm{A}_0)+\chi\, R^{-1}\int_\mathit{\Omega}  \chi^\top (P\bm{A}_0)\,  {\rm d}\xi\right)\\=\widetilde{P}^*\chi\, R^{-1}\int_0^t  \bm{v}(\tau)\,{\rm d}\tau\quad\text{for all } t\in[0,T).
	\end{multline*}
	An application of $\widetilde{P}^*$ to both sides of this equation yields
	\[ \mathcal{T}(P\bm{A}(t)-P\bm{A}_0)=\widetilde{P}^*\chi\,R^{-1}\int_0^t  \bm{v}(\tau)\,{\rm d}\tau\quad\text{for all } t\in[0,T),\]
	where $\mathcal{T}$ is the operator as in Lemma~\ref{lem:orthproj}\,\ref{lem:orthprojc}). Since $\mathcal{T}$ is invertible, we obtain
	\begin{equation}
	P\bm{A}(t)-P\bm{A}_0=\mathcal{T}^{-1}\left(\widetilde{P}^*\chi\,R^{-1}\int_0^t  \bm{v}(\tau)\,{\rm d}\tau\right)\quad\text{for all } t\in[0,T).\label{eq:PAexpr}
	\end{equation}
	Hence, using Lemma~\ref{lem:orthproj}\,\ref{lem:orthproja}) and~\ref{lem:orthprojc}), we obtain for all $t\in[0,T)$,
	\[
	\begin{aligned}
	&\|P\bm{A}(t)\|_{L^2(\mathit{\Omega};\mathbb{R}^3)}\\
	&\quad\;\leq \|P\bm{A}_0\|_{L^2(\mathit{\Omega};\mathbb{R}^3)}+\|\mathcal{T}^{-1}\|\|\widetilde{P}^*\| \|\chi R^{-1/2}\|_{L^2(\mathit{\Omega};\mathbb{R}^{3\times m})} \left\|\int_0^t  R^{-1/2}\bm{v}(\tau)\,{\rm d}\tau\right\|_2\\
	&\quad\;\leq  \sqrt{\tfrac{\gamma\,L_C}{\sigma_C}}\left(
	\|\bm{A}_0\|_{L^2(\mathit{\Omega};\mathbb{R}^3)}+\tfrac{L_C}{\sigma_C}  \|\chi R^{-1/2}\|_{L^2(\mathit{\Omega};\mathbb{R}^{3\times m})} \left\|\int_0^t  R^{-1/2}\bm{v}(\tau)\,{\rm d}\tau\right\|_2\right).
	\end{aligned}\]
	Further, Lemma~\ref{lem:orthproj}\,\ref{lem:orthprojb}) and Proposition~\ref{prop:stateest} imply  for all $t\in[0,T)$,
	\begin{multline*}
	\|(I-P)\bm{A}(t)\|_{L^2(\mathit{\Omega};\mathbb{R}^3)}\leq
	L_1\,\|\nabla\times \bm{A}\|_{L^2(\mathit{\Omega};\mathbb{R}^3)}\\
	\leq L_1\,\sqrt{\tfrac{L_\nu}{m_\nu}}\,\|\nabla\times \bm{A}_0\|_{L^2(\mathit{\Omega};\mathbb{R}^3)}+
	\tfrac{L_1}{\sqrt{2m_\nu}}\,\|R^{-1/2}\bm{v}\|_{L^2([0,t);\R^m)}
	\end{multline*}
	and, thus,
	\[\begin{aligned}
	&\|\bm{A}(t)\|_{L^2(\mathit{\Omega};\mathbb{R}^3)}
	\leq\|P\bm{A}(t)\|_{L^2(\mathit{\Omega};\mathbb{R}^3)}
	+\|(I-P)\bm{A}(t)\|_{L^2(\mathit{\Omega};\mathbb{R}^3)}\\
	&\qquad\leq\sqrt{\tfrac{\gamma\,L_C}{\sigma_C}} \left(
	\|\bm{A}_0\|_{L^2(\mathit{\Omega};\mathbb{R}^3)}+\tfrac{L_C}{\sigma_C}  \|\chi R^{-1/2}\|_{L^2(\mathit{\Omega};\R^{3\times m})} \left\|\int_0^t  R^{-1/2}\bm{v}(\tau)\,{\rm d}\tau\right\|_2\right)\\
	&\qquad\qquad+L_1\sqrt{\tfrac{L_\nu}{m_\nu}}\,\|\nabla\times \bm{A}_0\|_{L^2(\mathit{\Omega};\mathbb{R}^3)}+\tfrac{L_1}{\sqrt{2m_\nu}}\,\|R^{-1/2}\bm{v}\|_{L^2([0,t);\R^m)}.
	\end{aligned}\]
	
	\ref{thm:stateest2b})
	If {$\chi\in L^2(\divg\!=\!0,\mathit{\Omega};\R^3)^{1\times m}$}, then
	it follows from \cite[Theorem~3.17]{ABDG98} that there exists some $\bm{F}\in H_0(\curl,\mathit{\Omega)}^{1\times m}$ with $\chi=\nabla\times F$.
	Using the integration by parts formula for the curl operator, see \cite[eq.~(2.1)]{ChReSt21},
	we obtain that the columns of $\chi$ are orthogonal with respect to the inner product in $L^2(\mathit{\Omega};\R^{3})$ to all \mbox{$\bm{C}\in {X_0(\curl\!=\!0,\mathit{\Omega},\mathit{\Omega}_C)}$}.
	In other words,
	\[\chi\in ((\im\widetilde{P})^\bot)^{1\times m}=(\ker \widetilde{P}^*)^{1\times m},\]
	which gives $\widetilde{P}^*\chi=0$. Then \eqref{eq:PAexpr} reduces to
	$P\bm{A}(t)=P\bm{A}_0$ for all $t\in[0,T)$.
	Now proceeding as in the previous case, we obtain
	\[\begin{aligned}
	\|\bm{A}(t)\|_{L^2(\mathit{\Omega};\mathbb{R}^3)}
	\leq&\;\|P\bm{A}(t)\|_{L^2(\mathit{\Omega};\mathbb{R}^3)}
	+\|(I-P)\bm{A}(t)\|_{L^2(\mathit{\Omega};\mathbb{R}^3)}\\
	\leq&\;\sqrt{\tfrac{\gamma\,L_C}{\sigma_C}}\,\|\bm{A}_0\|_{L^2(\mathit{\Omega};\mathbb{R}^3)}
	\\&\;+L_1\,\sqrt{\tfrac{L_\nu}{m_\nu}}\,\|\nabla\times \bm{A}_0\|_{L^2(\mathit{\Omega};\mathbb{R}^3)}+
	\tfrac{L_1}{\sqrt{2m_\nu}}\,\|R^{-1/2}\bm{v}\|_{L^2([0,t);\R^m)}.
	\end{aligned}\]
\end{proof}

\begin{remark}
	Note that the divergence-freeness condition in Theorem~\textup{\ref{thm:stateest2}\,\ref{thm:stateest2b})} is guaranteed, if the support of $\chi$ does not meet the interface between the conducting and non-conducting subdomains, i.e.,
	$\mbox{supp}(\chi) \subset \mathit{\Omega}_C\cup \mathit{\Omega}_I$. In this case, the condition $\chi\in X(\mathit{\Omega},\mathit{\Omega}_C)^{1\times m}$ is even equivalent to
	$\chi\in L^2(\divg\!=\!0,\mathit{\Omega};\mathbb{R}^3)^{1\times m}$.
\end{remark}

\subsection{The free dynamics}\label{sec:uncont}

By {\em free dynamics}, we mean the solution behavior of the coupled MQS system \eqref{eq:MQS} in which \eqref{eq:MQS} no voltage is applied, i.e., $\bm{v}=0$. Our goal is now to analyse their asymptotic behavior for $t\to\infty$.

The following theorem shows that the magnetic energy $E(\bm{A}(t))$ as well as the \mbox{$L^2$-norm} of $\nabla\times \bm{A}(t)$ can be bounded from above by an~exponentially decaying  function provided $\bm{v}\equiv 0$.

\begin{theorem}\label{thm:expstab}
	Assume that $\mathit{\Omega}\subset\mathbb{R}^3$ with a~subdomain $\mathit{\Omega}_C$ satisfies Assumption~\textup{\ref{ass:omega}}. Further, let
	Assumptions~\textup{\ref{ass:material}} and~\textup{\ref{ass:init}} be fulfilled, and
	$\bm{A}_{0}\!\in\! X_0(\curl,\mathit{\Omega},\mathit{\Omega}_C)$. Moreover,
	let $E$ be the magnetic energy
	as defined in \eqref{eq:varphiA}, and let $(\bm{A},\bm{i})$
	be a~solution of the coupled MQS system \eqref{eq:MQS} with $\bm{v}\equiv 0$. Then for $\sigma_C$, $m_\nu$ and $L_\nu$  as in Assumption~\textup{\ref{ass:material}},
	$\gamma$ as in \eqref{eq:cconst}, and
	\begin{equation}\omega:=\tfrac{m_\nu^2}{\gamma\, L_\nu},\label{eq:omega}\end{equation}
	it holds for all $t\geq 0$ that
	\begin{align}
	E(\bm{A}(t))\leq&\, e^{-2\,\omega t}E(\bm{A}_0),\label{eq:MQSexpstab1}\\
	\|\nabla\times \bm{A}(t)\|_{L^2(\mathit{\Omega};\mathbb{R}^3)}\leq&\, \sqrt{\tfrac{L_\nu}{m_\nu}}\,
	e^{-\omega t}\,\|\nabla\times {\bm{A}_0}\|_{L^2(\mathit{\Omega};\mathbb{R}^3)}.\label{eq:MQSexpstab2}
	\end{align}
\end{theorem}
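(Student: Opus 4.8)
The plan is to derive the differential inequality $\tfrac{\rm d}{{\rm d}t}E(\bm{A}(t))\le-2\omega E(\bm{A}(t))$ for almost every $t>0$ and then to integrate it. Since $\bm{A}_0\in X_0(\curl,\mathit{\Omega},\mathit{\Omega}_C)$, Theorem~\ref{thm:energybalance} guarantees that $t\mapsto E(\bm{A}(t))$ belongs to $W^{1,1}([0,T])$ and, specialising \eqref{eq:MQSpass} to $\bm{v}\equiv0$ and differentiating, that
\[
\tfrac{\rm d}{{\rm d}t}E(\bm{A}(t))=-\bigl\|\tfrac{\rm d}{{\rm d}t}\sqrt{\sigma}\bm{A}(t)\bigr\|_{L^2(\mathit{\Omega};\mathbb{R}^3)}^2-\langle\bm{i}(t),R\,\bm{i}(t)\rangle_{2}=:-D(t)\qquad\text{for a.a. }t>0 .
\]
By the upper bound in \eqref{eq:equivalence} we have $E(\bm{A}(t))\le\tfrac{L_\nu}{2}\|\nabla\times\bm{A}(t)\|_{L^2(\mathit{\Omega};\mathbb{R}^3)}^2$, so it suffices to establish the pointwise dissipation estimate $\gamma\,D(t)\ge m_\nu^2\|\nabla\times\bm{A}(t)\|_{L^2(\mathit{\Omega};\mathbb{R}^3)}^2$ for a.a. $t>0$, with $\gamma$ as in \eqref{eq:cconst}; this is precisely $D(t)\ge\omega L_\nu\|\nabla\times\bm{A}(t)\|_{L^2(\mathit{\Omega};\mathbb{R}^3)}^2\ge2\omega E(\bm{A}(t))$ with $\omega$ as in \eqref{eq:omega}.

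To obtain the dissipation estimate I would use the strong form \eqref{eq:MQSsol} (available for a.a. $t$ because $\bm{A}_0\in X_0(\curl,\mathit{\Omega},\mathit{\Omega}_C)$) and test its first equation against the time-dependent element $\bm{F}=(I-P)\bm{A}(t)$, where $P$ is the orthogonal projector of Lemma~\ref{lem:orthproj}. Because $P\bm{A}(t)\in X_0(\curl\!=\!0,\mathit{\Omega},\mathit{\Omega}_C)$, one has $\nabla\times((I-P)\bm{A}(t))=\nabla\times\bm{A}(t)$, so the reluctivity term equals $\int_\mathit{\Omega}\nu(\cdot,\|\nabla\times\bm{A}(t)\|_2)\,\|\nabla\times\bm{A}(t)\|_2^2\,{\rm d}\xi$, which is $\ge m_\nu\|\nabla\times\bm{A}(t)\|_{L^2(\mathit{\Omega};\mathbb{R}^3)}^2$ by the strong monotonicity in Assumption~\ref{ass:material}\,\ref{ass:material2}) applied with $\varsigma=0$. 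Rearranging gives
\[
m_\nu\,\|\nabla\times\bm{A}(t)\|_{L^2(\mathit{\Omega};\mathbb{R}^3)}^2\le\bigl\langle\chi\,\bm{i}(t)-\tfrac{\rm d}{{\rm d}t}(\sigma\bm{A}(t)),\,(I-P)\bm{A}(t)\bigr\rangle_{L^2(\mathit{\Omega};\mathbb{R}^3)} .
\]

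Now the Cauchy--Schwarz inequality together with the Poincar\'e-type estimate \eqref{eq:L1const} bounds the right-hand side by $\|\chi\,\bm{i}(t)-\tfrac{\rm d}{{\rm d}t}(\sigma\bm{A}(t))\|_{L^2(\mathit{\Omega};\mathbb{R}^3)}$ times $\|\nabla\times\bm{A}(t)\|_{L^2(\mathit{\Omega};\mathbb{R}^3)}$; using $\|\chi\,\bm{i}(t)\|_{L^2(\mathit{\Omega};\mathbb{R}^3)}\le\|\chi R^{-1/2}\|_{L^2(\mathit{\Omega};\mathbb{R}^{3\times m})}\|R^{1/2}\bm{i}(t)\|_2$ (cf. \eqref{eq:chinorm}), the identity $\|\tfrac{\rm d}{{\rm d}t}(\sigma\bm{A}(t))\|_{L^2(\mathit{\Omega};\mathbb{R}^3)}=\sqrt{\sigma_C}\,\|\tfrac{\rm d}{{\rm d}t}\sqrt{\sigma}\bm{A}(t)\|_{L^2(\mathit{\Omega};\mathbb{R}^3)}$ (from Assumption~\ref{ass:material}\,\ref{ass:material1})), and a further Cauchy--Schwarz step in which the combination $\sigma_C+\|\chi R^{-1/2}\|_{L^2(\mathit{\Omega};\mathbb{R}^{3\times m})}^2=\gamma$ appears, one is led, after tracking the constants, to $m_\nu^2\|\nabla\times\bm{A}(t)\|_{L^2(\mathit{\Omega};\mathbb{R}^3)}^2\le\gamma\,D(t)$, i.e. $D(t)\ge2\omega E(\bm{A}(t))$. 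Gr\"onwall's lemma for the absolutely continuous nonnegative function $t\mapsto E(\bm{A}(t))$ then gives \eqref{eq:MQSexpstab1}, and \eqref{eq:MQSexpstab2} follows by sandwiching with \eqref{eq:equivalence}, $\tfrac{m_\nu}{2}\|\nabla\times\bm{A}(t)\|_{L^2(\mathit{\Omega};\mathbb{R}^3)}^2\le E(\bm{A}(t))\le e^{-2\omega t}E(\bm{A}_0)\le e^{-2\omega t}\tfrac{L_\nu}{2}\|\nabla\times\bm{A}_0\|_{L^2(\mathit{\Omega};\mathbb{R}^3)}^2$, and taking square roots.

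The main obstacle is the pointwise dissipation bound $D(t)\ge2\omega E(\bm{A}(t))$. Testing \eqref{eq:MQSsol} with $\bm{A}(t)$ itself does not work, since the resulting term then involves $\|\bm{A}(t)\|_{L^2(\mathit{\Omega};\mathbb{R}^3)}$, which is not controlled by $\|\nabla\times\bm{A}(t)\|_{L^2(\mathit{\Omega};\mathbb{R}^3)}$ (the curl-free part of $\bm{A}(t)$ is invisible to $E$); projecting off $X_0(\curl\!=\!0,\mathit{\Omega},\mathit{\Omega}_C)$ and invoking \eqref{eq:L1const} is exactly what repairs this, but it is also where the delicate geometric constant enters and must be carried through the value of $\omega$. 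A secondary technical point is to legitimise differentiating $t\mapsto E(\bm{A}(t))$ and evaluating the almost-everywhere-in-time identity \eqref{eq:MQSsol} at the time-dependent test function $(I-P)\bm{A}(t)$; both are admissible thanks to the enhanced regularity ($\sigma\bm{A}\in H^1([0,T];X(\mathit{\Omega},\mathit{\Omega}_C))$, $\nabla\times\bm{A}\in L^\infty([0,T];X(\mathit{\Omega},\mathit{\Omega}_C))$, $\bm{i}\in L^2([0,T];\mathbb{R}^m)$, $E(\bm{A}(\cdot))\in W^{1,1}([0,T])$) that Theorem~\ref{thm:energybalance} provides under $\bm{A}_0\in X_0(\curl,\mathit{\Omega},\mathit{\Omega}_C)$.
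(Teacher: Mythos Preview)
Your overall strategy---derive a pointwise dissipation bound $D(t)\ge 2\omega E(\bm{A}(t))$ and apply Gr\"onwall---matches the paper's, and your derivation of \eqref{eq:MQSexpstab2} from \eqref{eq:MQSexpstab1} is the same. The gap is in the dissipation bound itself: with the test function $(I-P)\bm{A}(t)$ you cannot reach the constant $\omega=\tfrac{m_\nu^2}{\gamma L_\nu}$ claimed in the theorem. Cauchy--Schwarz together with \eqref{eq:L1const} gives
\[
m_\nu\|\nabla\times\bm{A}(t)\|_{L^2}^2\;\le\;\|\chi\bm{i}(t)-\tfrac{\rm d}{{\rm d}t}(\sigma\bm{A}(t))\|_{L^2}\cdot L_1\,\|\nabla\times\bm{A}(t)\|_{L^2},
\]
so the factor $L_1$ cannot be dropped; your subsequent ``further Cauchy--Schwarz step'' then yields only $m_\nu^2\|\nabla\times\bm{A}(t)\|_{L^2}^2\le L_1^2\,\gamma\,D(t)$, i.e.\ the weaker rate $\omega/L_1^2$. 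Since $L_1$ is an abstract open-mapping constant with no a~priori upper bound by $1$, this does not prove \eqref{eq:MQSexpstab1} as stated.

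The paper avoids $L_1$ altogether by a different mechanism. It introduces the bounded operator $\mathcal{F}:X(\mathit{\Omega},\mathit{\Omega}_C)\times\R^m\to X(\mathit{\Omega},\mathit{\Omega}_C)$, $(\bm{B},\bm{j})\mapsto\sqrt{\sigma}\bm{B}+\chi R^{-1/2}\bm{j}$, with $\|\mathcal{F}\|\le\sqrt{\gamma}$ and closed range, and observes two structural facts: first, $\bm{H}(t):=\nabla\times(\nu\nabla\times\bm{A}(t))=-\mathcal{F}\tfrac{\rm d}{{\rm d}t}\mathcal{F}^*\bm{A}(t)\in\im\mathcal{F}$; second, the dissipation equals $D(t)=\|\tfrac{\rm d}{{\rm d}t}\mathcal{F}^*\bm{A}(t)\|^2=\|\mathcal{F}^+\bm{H}(t)\|^2$ (using $\mathcal{F}^+\mathcal{F}\mathcal{F}^*=\mathcal{F}^*$). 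For $\bm{F}\in\im\mathcal{F}$ one has $\|\bm{F}\|\le\sqrt{\gamma}\,\|\mathcal{F}^+\bm{F}\|$, whence $D(t)\ge\gamma^{-1}\|\bm{H}(t)\|_{L^2}^2$; combined with the lower bound $\|\bm{H}(t)\|_{L^2}\ge m_\nu\|\nabla\times\bm{A}(t)\|_{L^2}$ (obtained from monotonicity and integration by parts), this gives exactly $D(t)\ge\tfrac{m_\nu^2}{\gamma}\|\nabla\times\bm{A}(t)\|_{L^2}^2$. The point is that the Moore--Penrose identity $D(t)=\|\mathcal{F}^+\bm{H}(t)\|^2$ turns the dissipation \emph{directly} into a norm of $\bm{H}(t)$, bypassing any Poincar\'e-type detour through $\|(I-P)\bm{A}(t)\|$ and hence any appearance of $L_1$.
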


\begin{proof}
	Consider the operator
	\begin{align*}
	\mathcal{F}: \;\;X(\mathit{\Omega},\mathit{\Omega}_C)\times \R^m&\,\to\,X(\mathit{\Omega},\mathit{\Omega}_C),\\
	(\bm{A},\bm{i})&\,\mapsto\,\sqrt{\sigma} \bm{A}+\chi\, R^{-1/2}\,\bm{i}.
	\end{align*}
	Then $\mathcal{F}$ is {linear and} bounded with
	%and, by
	%\[\mathcal{F}\mathcal{F}^*=\left(\bm{A}\mapsto \sigma\bm{A}+\chi R^{-1}\int_\mathit{\Omega} \chi^\top \bm{A}(\tau)\, {\rm d}\xi\right),\]
	%we obtain
	$\|\mathcal{F}\|\leq {\sqrt{\gamma}}$.
	Further, $\im\mathcal{F}$ is closed, as it is the sum of the closed space $L^2(\mathit{\Omega_C};\mathbb{R}^3)$ and a~finite-dimensional subspace of $X(\mathit{\Omega},\mathit{\Omega}_C)$.
	Therefore, we conclude from \cite[Theorem~9.3.3]{BenIsGre03}
	%We can now conclude from the inverse mapping theorem \cite[Theorem~7.8]{Alt16}
	that $\mathcal{F}$ has a~bounded Moore-Penrose inverse \mbox{$\mathcal{F}^+:X(\mathit{\Omega},\mathit{\Omega}_C)\to X(\mathit{\Omega},\mathit{\Omega}_C)\times \R^m$} such that $\mathcal{F}\mathcal{F}^+$ and $\mathcal{F}^+\mathcal{F}$ are the ortho\-go\-nal projectors onto $\im \mathcal{F}$ and $\im \mathcal{F}^*$, respectively. Using $\|\mathcal{F}\|\leq {\sqrt{\gamma}}$,
	we obtain for all $\bm{F}\in\im \mathcal{F}$,
	\begin{equation}
	\|\bm{F}\|_{{L^2(\mathit{{\Omega}};\mathbb{R}^3)}}=
	\|\mathcal{F}\mathcal{F}^+\bm{F}\|_{{L^2(\mathit{{\Omega}};\mathbb{R}^3)}}
	\leq {\sqrt{\gamma}}\,\|\mathcal{F}^+\bm{F}\|_{L^2(\mathit{{\Omega}};\mathbb{R}^3){\times\R^m}}.
	\label{eq:E-diss}
	\end{equation}
	Let $(\bm{A},\bm{i})$ be a~solution of \eqref{eq:MQS} with $\bm{v}\equiv 0$. Then it follows from \eqref{eq:MQSsol} that for almost all $t\geq 0$,
	\begin{equation}\label{eq:pSAimE}
	\begin{aligned}
	\nabla \times \left(\nu(\cdot,\|\nabla \times \bm{A}(t)\|_2)
	\nabla \times \bm{A}(t)\right)& =-\tfrac{\rm d}{{\rm d} t}\left(\sigma\bm{A}(t)\right) +  \chi\, \bm{i}(t)\\
	& =-\mathcal{F}\tfrac{\rm d}{{\rm d}t} \mathcal{F}^*\bm{A}(t)\in\im\mathcal{F},
	\end{aligned}
	\end{equation}
	and Assumption~\ref{ass:material}\,\ref{ass:material2}) gives
	\[\langle\nabla \times \bm{A}(t), \nu(\cdot,\|\nabla \times \bm{A}(t)\|_2)\nabla \times \bm{A}(t)\rangle_{L^2(\mathit{{\Omega}};\mathbb{R}^3)}\geq m_\nu\|\nabla\times {\bm{A}}(t)\|^2_{L^2(\mathit{\Omega};\mathbb{R}^3)}.\]
	Further, using the integration by parts formula
	for the weak curl operator together with the fact that $\bm{A}(t)\in X_0(\curl,\mathit{\Omega},\mathit{\Omega}_C)$ for almost all $t\geq0$, we obtain
	\begin{equation}\label{eq:pSAdiss}
	\|\nabla \times \left(\nu(\cdot,\|\nabla \times \bm{A}(t)\|_2)\nabla \times \bm{A}(t)\right)\|_{L^2(\mathit{{\Omega}};\mathbb{R}^3)}\geq m_\nu\|\nabla\times {\bm{A}}(t)\|_{L^2(\mathit{\Omega};\mathbb{R}^3)}.
	\end{equation}
	Then it follows from \eqref{eq:E-diss} and \eqref{eq:pSAdiss} that for almost all $t\geq 0$,
	\begin{equation}\label{eq:E-est}
	\begin{aligned}
	\left\|\mathcal{F}^+(\nabla \right. &\left.\times (\nu(\cdot,\|\nabla \times \bm{A}(t)\|_2)
	\nabla \times \bm{A}(t)))\right\|_{L^2(\mathit{{\Omega}};\mathbb{R}^3)\times\R^m}\\
	& \geq \tfrac{1}{{\sqrt{\gamma}}}\, \left\|\nabla \times \left(\nu(\cdot,\|\nabla \times \bm{A}(t)\|_2)
	\nabla \times \bm{A}(t)\right)\right\|_{L^2(\mathit{\Omega}_C;\mathbb{R}^3)}\\
	& \geq \tfrac{m_\nu}{{\sqrt{\gamma}}}\, \left\|\nabla \times \bm{A}(t)\right\|_{L^2(\mathit{\Omega}_C;\mathbb{R}^3)}.
	\end{aligned}\end{equation}
	Let {$\omega$ be as in \eqref{eq:omega}}. Using the energy balance \eqref{eq:MQSpass}
	and the relation $\mathcal{F}^*=\mathcal{F}^+\mathcal{F}\mathcal{F}^*$, we obtain, by invoking $\bm{v}\equiv 0$, that for all $0\leq t_0<t<\infty$,
	%\allowdisplaybreaks
	\begin{align*}
	E(\bm{A}&(t_1))-E(\bm{A}(t_0))\\
	&\;=\;-\int_{t_0}^{t_1}\|\tfrac{\rm d}{{\rm d}\tau} \sqrt{\sigma}\bm{A}(\tau))\|^2_{L^2(\mathit{\Omega};\mathbb{R}^3)}{\rm d}\tau-\int_{t_0}^{t_1}\langle\bm{i}(\tau), R\,\bm{i}(\tau)\rangle_{2}\,{\rm d}\tau\\
	&\!\overset{\eqref{eq:MQSsol}}{=} -\int_{t_0}^{t_1}\|\tfrac{\rm d}{{\rm d}\tau}
	\sqrt{\sigma}\bm{A}(\tau))\|^2_{L^2(\mathit{\Omega};\mathbb{R}^3)}{\rm d}\tau
	%\\&
	%\quad-{\int_{t_0}^{t_1}}\left(R^{-1/2}\tfrac{\rm d }{{\rm d} \tau}\int_\mathit{\Omega} \chi^\top\bm{A}(\tau)\, {\rm d}\xi\right)\cdot \left(R^{-1/2}\tfrac{\rm d }{{\rm d} \tau}\int_\mathit{\Omega} \chi^\top\bm{A}(\tau)\, {\rm d}\xi\right)\,{\rm d}\tau\\
	-{\int_{t_0}^{t_1}} \left\|\tfrac{\rm d }{{\rm d} \tau} R^{-1/2}\int_\mathit{\Omega} \chi^\top\bm{A}(\tau)\, {\rm d}\xi\right\|_2^2\,{\rm d}\tau\\
	&\;=\;-\int_{t_0}^{t_1}\|\tfrac{\rm d}{{\rm d}\tau} \mathcal{F}^*\bm{A}(\tau)\|^2_{L^2(\mathit{\Omega};\mathbb{R}^3)\times\R^m}{\rm d}\tau\\
	&\;=\;-\int_{t_0}^{t_1}\|\mathcal{F}^+\mathcal{F}\tfrac{\rm d}{{\rm d}\tau} \mathcal{F}^*\bm{A}(\tau))\|^2_{L^2(\mathit{\Omega};\mathbb{R}^3)\times\R^m}{\rm d}\tau\\
	%{=}&-\int_{t_0}^{t_1}\left\|\mathcal{F}^{-}\left(\tfrac{\rm d}{{\rm d}\tau}\sigma\bm{A}(\tau)+\chi\, \tfrac{\rm d }{{\rm d} \tau}R^{-1}\int_\mathit{\Omega} \chi^\top\bm{A}(\tau)\, {\rm d}\xi\right)\right\|^2_{L^2(\mathit{\Omega};\mathbb{R}^3)\times\R^m}{\rm d}\tau\\
	%\overset{\eqref{eq:sol2}}{=}
	%=&-\int_{t_0}^{t_1}\left\|\mathcal{F}^{-}\left(\tfrac{\rm d}{{\rm d}\tau}\sigma\bm{A}(\tau)+\chi \,\bm{i}(\tau)\right)\right\|^2_{L^2(\mathit{\Omega};\mathbb{R}^3)\times\R^m}{\rm d}\tau\\
	&\!\overset{\eqref{eq:pSAimE}}{=}
	-\int_{t_0}^{t_1}\left\|\mathcal{F}^+
	\left(\nabla \times \left(\nu(\|\nabla \times \bm{A}(\tau)\|_2)
	\nabla \times \bm{A}(\tau)\right)\right)\right\|^2_{L^2(\mathit{\Omega};\mathbb{R}^3)\times\R^m}{\rm d}\tau\\
	&\!\overset{\eqref{eq:E-est}}{\leq}
	-\frac{m_\nu^2}{\gamma}\, \int_{t_0}^{t_1}\left\|\nabla \times \bm{A}(\tau)\right\|^2_{L^2(\mathit{\Omega};\mathbb{R}^3)}{\rm d}\tau\\
	&\!\overset{\eqref{eq:equivalence}}{\leq}
	- 2\,\omega\,\int_{t_0}^{t_1} E(\bm{A}(\tau)){\rm d}\tau
	\end{align*}
	with $\omega$ as in \eqref{eq:omega}.
	By a~division of the above inequality by $t_1-t_0$ and then taking the limit $t_1\to t_0$, we obtain that the weak derivative of $t\mapsto  E(\bm{A}(t))$ fulfills the differential inequality
	\begin{equation}\tfrac{\rm d}{{\rm d}t} E(\bm{A}(t))\leq -2\,\omega \, E(\bm{A}(t)).\label{eq:Gronwall}\end{equation}
	Then Gr\"onwall's inequality for the weak derivative \cite[Lemma~IV.4.1]{Showalter96} gives rise to \eqref{eq:MQSexpstab1}. The estimate \eqref{eq:MQSexpstab2} can be concluded from \eqref{eq:MQSexpstab1} by further using the inequalities in \eqref{eq:equivalence}, and subsequently taking the square root.
\end{proof}

\newpage
\begin{remark}\
	\begin{enumerate}[\rm a)]
		\item The inequality \eqref{eq:Gronwall} shows that the scalar function $t\mapsto  E(\bm{A}(t))$ is strictly decaying unless $\nabla\times\bm{A}(t){= 0}$. This is not surprising from a~physical point of view, as, by $\bm{v}\equiv 0$, no external energy is put into the system.
		\item Let us briefly consider the case where the free MQS system is initialized with \linebreak
		\mbox{$\bm{A}_0\in X(\mathit{\Omega},\mathit{\Omega}_C)$}, which is not necessarily in $H_0(\curl,\mathit{\Omega})$. By \textup{\cite[Theorem~7.1]{ChReSt21}} on the existence and regularity properties of the solutions of the coupled MQS system \eqref{eq:MQS},
		we have $\bm{A}(t)\in X_0(\curl,\mathit{\Omega},\mathit{\Omega}_C)$ for almost all $t>0$. Further, for each finite interval $[0,T]$, the functions
		$t\mapsto  E(\bm{A}(t))$ and $t\mapsto \|\nabla\times \bm{A}(t)\|_{L^2(\mathit{\Omega};\mathbb{R}^3)}$
		can be bounded by a~constant times $\tfrac{1}{t}$ and $\tfrac{1}{\sqrt{t}}$, respectively. As a~consequence, there exists some constants $M_1,M_2>0$ such that  the solution
		$(\bm{A},\bm{i})$ of \eqref{eq:MQS} with $\bm{v}\equiv 0$ satisfies for all $t>0$,
		\[
		E(\bm{A}(t))\leq M_1 \,(1+\tfrac1t)\, e^{-2\omega t},\quad
		\|\nabla\times \bm{A}(t)\|_{L^2(\mathit{\Omega};\mathbb{R}^3)}\leq  M_2\,(1+\tfrac1{\sqrt{t}})\, e^{-\omega t}.
		\]
	\end{enumerate}
\end{remark}

Finally, we derive the estimates for the $L^2$-norm of the magnetic vector potential $\bm{A}(t)$ of the uncontrolled coupled MQS system \eqref{eq:MQS}.

\begin{theorem}\label{th:estL2uncontr}
	Assume that $\mathit{\Omega}\subset\mathbb{R}^3$ with a~subdomain $\mathit{\Omega}_C$
	satisfies Assumption~\textup{\ref{ass:omega}}. Further, let
	Assumptions~\textup{\ref{ass:material}} and~\textup{\ref{ass:init}} be fulfilled, and
	$\bm{A}_{0}\in X_0(\curl,\mathit{\Omega},\mathit{\Omega}_C)$ and $\bm{v}\equiv 0$. Moreover, let $E$ be the magnetic energy as defined in \eqref{eq:varphiA}, and let $(\bm{A},\bm{i})$
	be a~solution of the coupled MQS system \eqref{eq:MQS}.
	Then for $\sigma_C$, $L_\nu$, $m_\nu$ as in Assumption~\textup{\ref{ass:material}},
	$\gamma$ as in \eqref{eq:cconst}, and $\omega$ as in \eqref{eq:omega}, it holds  for all $t\geq 0$,
	\begin{equation}
	\|\bm{A}(t)\|_{L^2(\mathit{\Omega};\mathbb{R}^3)}
	\leq \sqrt{\tfrac{\gamma\,L_C}{\sigma_C}}\,\|\bm{A}_0\|_{L^2(\mathit{\Omega};\mathbb{R}^3)}+
	L_1 \,\sqrt{\tfrac{L_\nu}{m_\nu}}\,  e^{-\omega t}\|\nabla\times \bm{A}_0\|_{L^2(\mathit{\Omega};\mathbb{R}^3)}.\label{eq:stateest2b1}
	\end{equation}
	If, additionally, the initial value fulfills for all $\bm{F}\in X_0(\curl\!=\!0,\mathit{\Omega},\mathit{\Omega}_C)$,
	\begin{equation}\label{eq:orthcond}
	\int_{\mathit{\Omega}} \sigma\bm{A}_0\cdot\bm{F}\,  {\rm d}\xi+\int_\mathit{\Omega}
	\chi^\top \bm{A}_0\,  {\rm d}\xi\cdot R^{-1} \int_\mathit{\Omega}
	\chi^\top \bm{F}\,  {\rm d}\xi=0,
	\end{equation}
	then for all $t\geq 0$,
	\begin{equation}
	\|\bm{A}(t)\|_{L^2(\mathit{\Omega};\mathbb{R}^3)}
	\leq
	L_1 \,\sqrt{\tfrac{L_\nu}{m_\nu}}\,  e^{-\omega t}\|\nabla\times \bm{A}_0\|_{L^2(\mathit{\Omega};\mathbb{R}^3)}.
	\label{eq:stateest2b2}
	\end{equation}
\end{theorem}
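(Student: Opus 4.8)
The plan is to reuse the decomposition and the machinery developed in the proof of Theorem~\ref{thm:stateest2}, and then to feed in the exponential decay of $\|\nabla\times\bm{A}(t)\|_{L^2(\mathit{\Omega};\mathbb{R}^3)}$ from Theorem~\ref{thm:expstab}. Let $P\in\mathcal{L}(X_0(\curl,\mathit{\Omega},\mathit{\Omega}_C))$ be the orthogonal projector onto $X_0(\curl\!=\!0,\mathit{\Omega},\mathit{\Omega}_C)$ for the inner product~\eqref{eq:Xinner}, with bounded extension $\widetilde{P}\in\mathcal{L}(X(\mathit{\Omega},\mathit{\Omega}_C))$ from Lemma~\ref{lem:orthproj}\,\ref{lem:orthproja}). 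Since $\bm{v}\equiv0$, the integral $\int_0^t\bm{v}(\tau)\,{\rm d}\tau$ vanishes, so the identity~\eqref{eq:PAexpr} obtained in the proof of Theorem~\ref{thm:stateest2} reduces to $P\bm{A}(t)=P\bm{A}_0$ for all $t\geq0$ (passing from $[0,T)$ to the half-line via Remark~\ref{rem:solinf}). Using $\bm{A}_0\in X_0(\curl,\mathit{\Omega},\mathit{\Omega}_C)$ and $\|\widetilde{P}\|\leq\sqrt{\gamma L_C/\sigma_C}$, this gives $\|P\bm{A}(t)\|_{L^2(\mathit{\Omega};\mathbb{R}^3)}=\|\widetilde{P}\bm{A}_0\|_{L^2(\mathit{\Omega};\mathbb{R}^3)}\leq\sqrt{\gamma L_C/\sigma_C}\,\|\bm{A}_0\|_{L^2(\mathit{\Omega};\mathbb{R}^3)}$.

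Next I would bound the complementary component by combining Lemma~\ref{lem:orthproj}\,\ref{lem:orthprojb}) with the decay estimate~\eqref{eq:MQSexpstab2}: for all $t\geq0$,
\[
\|(I-P)\bm{A}(t)\|_{L^2(\mathit{\Omega};\mathbb{R}^3)}\leq L_1\,\|\nabla\times\bm{A}(t)\|_{L^2(\mathit{\Omega};\mathbb{R}^3)}\leq L_1\sqrt{\tfrac{L_\nu}{m_\nu}}\,e^{-\omega t}\,\|\nabla\times\bm{A}_0\|_{L^2(\mathit{\Omega};\mathbb{R}^3)}.
\]
The triangle inequality $\|\bm{A}(t)\|_{L^2(\mathit{\Omega};\mathbb{R}^3)}\leq\|P\bm{A}(t)\|_{L^2(\mathit{\Omega};\mathbb{R}^3)}+\|(I-P)\bm{A}(t)\|_{L^2(\mathit{\Omega};\mathbb{R}^3)}$ then delivers~\eqref{eq:stateest2b1}.

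For the refined estimate under the orthogonality hypothesis, I would note that every $\bm{F}\in X_0(\curl\!=\!0,\mathit{\Omega},\mathit{\Omega}_C)$ satisfies $\nabla\times\bm{F}=0$, so the left-hand side of~\eqref{eq:orthcond} coincides with $\langle\bm{A}_0,\bm{F}\rangle_{X_0(\curl,\mathit{\Omega},\mathit{\Omega}_C)}$ for the inner product~\eqref{eq:Xinner}. Hence~\eqref{eq:orthcond} is exactly the assertion that $\bm{A}_0\perp X_0(\curl\!=\!0,\mathit{\Omega},\mathit{\Omega}_C)$, i.e.\ $P\bm{A}_0=0$; consequently $P\bm{A}(t)=P\bm{A}_0=0$ for all $t\geq0$, the first summand drops out, and~\eqref{eq:stateest2b2} follows.

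All steps reduce to bookkeeping once Lemma~\ref{lem:orthproj}, Theorem~\ref{thm:stateest2} and Theorem~\ref{thm:expstab} are in hand, so I do not expect a genuine obstacle; the only minor technical point is the transition of~\eqref{eq:PAexpr} from bounded intervals to all $t\geq0$. Conceptually, the reason the $\|\bm{A}_0\|_{L^2(\mathit{\Omega};\mathbb{R}^3)}$-term in~\eqref{eq:stateest2b1} carries no exponential factor is that the free dynamics conserve the curl-free component $P\bm{A}$ of the vector potential, and imposing~\eqref{eq:orthcond}, which is equivalent to $P\bm{A}_0=0$, annihilates that component altogether.
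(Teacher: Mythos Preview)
Your proposal is correct and follows essentially the same route as the paper: both arguments invoke the projector from Lemma~\ref{lem:orthproj}, use the identity~\eqref{eq:PAexpr} with $\bm{v}\equiv 0$ to conclude that $P\bm{A}(t)=P\bm{A}_0$, split $\bm{A}(t)$ via the triangle inequality, bound the curl-free piece by $\|\widetilde{P}\|\,\|\bm{A}_0\|_{L^2(\mathit{\Omega};\mathbb{R}^3)}$, and control the complementary piece through Lemma~\ref{lem:orthproj}\,\ref{lem:orthprojb}) combined with the exponential decay~\eqref{eq:MQSexpstab2}. Your reading of~\eqref{eq:orthcond} as $P\bm{A}_0=0$ is exactly the paper's observation as well.
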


\begin{proof}
	Let $\widetilde{P}\in\mathcal{L}(X(\mathit{\Omega},\mathit{\Omega}_C))$ be the projector as in Lemma~\ref{lem:orthproj}\,\ref{lem:orthproja}). Then by using the argumentation as in the proof of Theorem~\ref{thm:stateest2}\,\ref{thm:stateest2a})
	and invoking \mbox{$\bm{v}\equiv 0$}, we obtain that $\widetilde{P}\bm{A}(t)=\widetilde{P}\bm{A}_0$ for all $t\geq0$. Thus, by further using that
	\mbox{$\bm{A}(t)\in X_0(\curl,\mathit{\Omega},\mathit{\Omega}_C)$}, we obtain for almost all $t\geq0$,
	\begin{equation}
	\begin{aligned}
	\|\bm{A}(t)\|_{L^2(\mathit{\Omega};\mathbb{R}^3)}
	&\;\leq\;\|\widetilde{P}\bm{A}(t)\|_{L^2(\mathit{\Omega};\mathbb{R}^3)}+\|(I-\widetilde{P})\bm{A}(t)\|_{L^2(\mathit{\Omega};\mathbb{R}^3)}\\
	&\;=\;\|\widetilde{P}\bm{A}_0\|_{L^2(\mathit{\Omega};\mathbb{R}^3)}+\|(I-{P})\bm{A}(t)\|_{L^2(\mathit{\Omega};\mathbb{R}^3)}\\
	&\!\overset{\eqref{eq:L1const}}{\leq}\|\widetilde{P}\bm{A}_0\|_{L^2(\mathit{\Omega};\mathbb{R}^3)}+L_1\|\nabla\times\bm{A}(t)\|_{L^2(\mathit{\Omega};\mathbb{R}^3)}\\
	&\!\overset{\eqref{eq:MQSexpstab2}}{\leq}\|\widetilde{P}\bm{A}_0\|_{L^2(\mathit{\Omega};\mathbb{R}^3)}+L_1 \,\sqrt{\tfrac{L_\nu}{m_\nu}}\,  e^{-\omega t}\|\nabla\times \bm{A}_0\|_{L^2(\mathit{\Omega};\mathbb{R}^3)}.
	\end{aligned}\label{eq:stateL2est1}\end{equation}
	Then \eqref{eq:stateest2b1} follows by using the bound $\|\widetilde{P}\|\leq \sqrt{\tfrac{\gamma\,L_C}{\sigma_C}}$ from Lemma~\ref{lem:orthproj}\,\ref{lem:orthproja}).
	
	On the other hand, if $\bm{A}_0$ satisfies \eqref{eq:orthcond}, then $\bm{A}_0$ is orthogonal to all elements of $X_0(\curl\!=\!0,\mathit{\Omega},\mathit{\Omega}_C)$ with respect to the inner product \eqref{eq:Xinner}. Since $P$ is an orthogonal projector with respect to that inner product, we have $\widetilde{P}\bm{A}_0={P}\bm{A}_0=0$, and, hence, \eqref{eq:stateL2est1} immediatelly implies \eqref{eq:stateest2b2}.
\end{proof}

%\ts{
%By taking $\tilde{X}_0=\ker\,P$, we immediately conclude from the proof of Theorem~\ref{th:estL2uncontr}  that $\bm{A}_*=0$ is a~conditionally exponentially stable equilibrium of the uncontrolled system \eqref{eq:MQS}.
%\begin{corollary}\label{cor:expst}
%Under the assumptions of Theorem~\textup{\ref{th:estL2uncontr}}, the equilibrium \linebreak \mbox{$\bm{A}_*=0$} of the coupled MQS system %\eqref{eq:MQS} with $\bm{v}\equiv 0$ is conditionally exponentially stable with the decay rate $\omega$ as in \eqref{eq:omega}.
%\end{corollary}
%}

\section{Conclusion}

We have considered a quasilinear magneto-quasistatic approximation of Maxwell's
equations, which is furthermore coupled with an integral equation. By employing
the magnetic energy, we have shown that this system is passive and admits a~representation as
a~port-Hamiltonian system.
Further, we have derived estimates of the state and output of the system by means of the initial value and the input. A~special emphasis in the solution estimates is placed on the free system with the zero input voltage,
%(that is, the zero input voltage is applied),
where we have shown that the magnetic energy decays exponentially.

%\section*{Acknowledgments}

% You may incorporate your references as follows in your main tex file.
% Using BibTex is not recommended but can be handled. If you use BibTex, please include the file with your final paper files.
% AIMS editorial staff will add MR and DOI numbers to your references.
\providecommand{\href}[2]{#2}
\providecommand{\arxiv}[1]{\href{http://arxiv.org/abs/#1}{arXiv:#1}}
\providecommand{\url}[1]{\texttt{#1}}
\providecommand{\urlprefix}{URL }

%\begin{thebibliography}{00}
%
%\end{thebibliography}


\begin{thebibliography}{10}

\bibitem{Alt16}
\newblock H.~Alt,
\newblock \emph{Linear Functional Analysis},
\newblock Universitext, Springer-Verlag, London, 2016.

\bibitem{ABDG98}
\newblock C.~Amrouche, C.~Bernardi, M.~Dauge and V.~Girault,
\newblock Vector potentials in three-dimensional non-smooth domains,
\newblock \emph{Math. Methods Appl. Sci.}, \textbf{21} (1998), 823--864.

\bibitem{AndeV73}
\newblock B.~Anderson and S.~Vongpanitlerd,
\newblock \emph{Network Analysis and Synthesis},
\newblock Prentice Hall, Englewood Cliffs, NJ, 1973.

\bibitem{ArKr18}
\newblock W.~Arendt and M.~Kreuter,
\newblock Mapping theorems for {S}obolev-spaces of vector-valued functions,
\newblock \emph{Studia Math.}, \textbf{240} (2018), 275--299.

\bibitem{ArnH12}
\newblock L.~Arnold and B.~Harrach,
\newblock A uni{fi}ed variational formulation for the parabolic-elliptic eddy
  current equations,
\newblock \emph{SIAM J. Appl. Math.}, \textbf{72} (2012), 558--576.

\bibitem{Bar10}
\newblock V.~Barbu,
\newblock \emph{Nonlinear Differential Equations of Monotone Types in Banach
  Spaces},
\newblock Springer Monographs in Mathematics, Springer-Verlag, New York, 2010.

\bibitem{BeMeXuZw18}
\newblock C.~Beattie, V.~Mehrmann, H.~Xu and H.~Zwart,
\newblock Linear port-{H}amiltonian descriptor systems,
\newblock \emph{Math. Control Signals Syst.}, \textbf{30} (2018), article
  number 17.

\bibitem{BKvdSZ10}
\newblock J.~Behrndt, M.~Kurula, A.~{van der Schaft} and H.~Zwart,
\newblock {D}irac structures and their composition on {H}ilbert spaces,
\newblock \emph{J.\ Math.\ Anal.\ Appl.}, \textbf{372} (2010), 402--422.

\bibitem{BenIsGre03}
\newblock A.~Ben-Israel and T.~Greville,
\newblock \emph{Generalized Inverses: Theory and Applications},
\newblock CMS Books in Mathematics, Springer-Verlag, New York, 2003.

\bibitem{BroLME07}
\newblock B.~Brogliato, R.~Lozano, B.~Maschke and O.~Egeland,
\newblock \emph{Dissipative Systems Analysis and Control: Theory and
  Applications},
\newblock Communications and Control Engineering, Springer-Verlag, London,
  2007.

\bibitem{ChReSt21}
\newblock R.~Chill, T.~Reis and T.~Stykel,
\newblock \emph{Analysis of a quasilinear coupled magneto-quasistatic model:
  solvability and regularity of solutions},
\newblock Preprint arxiv: ????.????? [math.??], 2022.

\bibitem{Cou90}
\newblock T.~Courant,
\newblock Dirac manifolds,
\newblock \emph{Trans. Amer. Math. Soc.}, \textbf{319} (1990), 631--661.

\bibitem{GeHaRe21}
\newblock F.~Gernandt~H.and~Haller and T.~Reis,
\newblock A linear relation approach to port-{H}amiltonian
  differential-algebraic equations,
\newblock \emph{SIAM J. Matrix Anal. Appl.}, \textbf{42} (2021), 1011--1044,
\newblock \urlprefix\url{https://doi.org/10.1137/20M1371166}.

\bibitem{HillM80}
\newblock D.~Hill and P.~Moylan,
\newblock Dissipative dynamical systems: Basic input-output and state
  properties,
\newblock \emph{J. Franklin Inst.}, \textbf{309} (1980), 327--357.

\bibitem{JacZwa12}
\newblock B.~Jacob and H.~Zwart,
\newblock \emph{Linear Port-{H}amiltonian Systems on Infinite-Dimensional
  Spaces}, vol. 223 of Operator Theory: Advances and Applications,
\newblock Birkh\"auser, Basel, 2012.

\bibitem{ReiJ08}
\newblock B.~Jacob and T.~Reis,
\newblock Passive infinite-dimensional descriptor systems,
\newblock in \emph{Proceedings of the MTNS 2008},
\newblock Blacksburg, VA, 2008.

\bibitem{JvdS14}
\newblock D.~Jeltsema and A.~van~der Schaft,
\newblock Port-{H}amiltonian systems theory: An introductory overview,
\newblock \emph{Foundations and Trends in Systems and Control}, \textbf{1}
  (2014), 173--387.

\bibitem{KerS21}
\newblock J.~Kerler-Back and T.~Stykel,
\newblock Balanced truncation model reduction for {3D} linear
  magneto-quasistatic field problems,
\newblock in \emph{Realization and Model Reduction of Dynamical Systems -
  A~Festschrift in Honor of the 70th Birthday of Thanos Antoulas} (eds.
  C.~Beattie, P.~Benner, M.~Embree, S.~Gugercin and S.~Lefteriu),
\newblock Springer International Publishing, Cham, 2022.

\bibitem{MasvdS02}
\newblock B.~Maschke and A.~van~der Schaft,
\newblock Hamiltonian formulation of distributed-parameter systems with
  boundary energy flow,
\newblock \emph{J. Geom. Phys.}, \textbf{42} (2002), 166--194.

\bibitem{MvdS18}
\newblock B.~Maschke and A.~van~der Schaft,
\newblock Generalized port-{H}amiltonian {DAE} systems,
\newblock \emph{Systems \& Control Letters}, \textbf{121} (2018), 31--37.

\bibitem{MvdS19}
\newblock B.~Maschke and A.~van~der Schaft,
\newblock {D}irac and {L}agrange algebraic constraints in nonlinear
  port-{H}amiltonian systems,
\newblock \emph{Vietnam J. Math.}, \textbf{48} (2020), 929--939.

\bibitem{MehM19ppt}
\newblock V.~Mehrmann and R.~Morandin,
\newblock \emph{Structure-preserving discretization for port-{H}amiltonian
  descriptor systems},
\newblock {Preprint arXiv:1903.10451 [math.OC]}, 2019,
\newblock \urlprefix\url{https://arxiv.org/abs/1903.10451}.

\bibitem{MvdS04a}
\newblock C.~Melchiorri and A.~{van der Schaft},
\newblock Port {H}amiltonian formulation of infinite dimensional systems - {I}.
  {M}odeling,
\newblock in \emph{43rd IEEE Conference on Decision and Control, December
  14-17, 2004, Atlantis, Paradise Island, Bahamas}, 2004.

\bibitem{MvdS04b}
\newblock C.~Melchiorri and A.~{van der Schaft},
\newblock Port {H}amiltonian formulation of infinite dimensional systems -
  {II}. {B}oundary control by interconnection,
\newblock in \emph{43rd IEEE Conference on Decision and Control, December
  14-17, 2004, Atlantis, Paradise Island, Bahamas}, 2004.

\bibitem{NicT14}
\newblock S.~Nicaise and F.~Tr\"oltzsch,
\newblock A coupled {M}axwell integrodifferential model for magnetization
  processes,
\newblock \emph{Math. Nachr.}, \textbf{287} (2014), 432--452.

\bibitem{PauPTW21}
\newblock D.~Pauly, R.~Picard, S.~Trostorff and M.~Waurick,
\newblock On a class of degenerate abstract parabolic problems and applications
  to some eddy current models,
\newblock \emph{J. Funct. Anal.}, \textbf{280} (2021), 108847.

\bibitem{Showalter96}
\newblock R.~E. Showalter,
\newblock \emph{Monotone Operators in {B}anach Space and Nonlinear Partial
  Differential Equations},
\newblock American Mathematical Society, Providence, RI, 1996.

\bibitem{Staf02}
\newblock O.~Staffans,
\newblock Passive and conservative continuous-time impedance and scattering
  systems. {Part I}: {W}ell-posed systems,
\newblock \emph{Math. Control Signals Systems}, \textbf{15} (2002), 291--315.

\bibitem{vdS13}
\newblock A.~van~der Schaft,
\newblock Port-{H}amiltonian differential-algebraic systems,
\newblock in \emph{Surveys in Differential-Algebraic Equations I} (eds.
  A.~Ilchmann and T.~Reis),
\newblock Differential-Algebraic Equations Forum, Springer, Berlin Heidelberg,
  2013,
\newblock 173--226.

\bibitem{Will72}
\newblock J.~Willems,
\newblock Dissipative dynamical systems part {I}: {G}eneral theory,
\newblock \emph{Arch. Rational Mech. Anal.}, \textbf{45} (1972), 321--351.

\bibitem{Zeid86}
\newblock E.~Zeidler,
\newblock \emph{Nonlinear Functional Analysis and its Applications I: Fixed
  Point Theorems},
\newblock Springer-Verlag, New York, 1986.

\end{thebibliography}
\end{document}